\numberwithin{equation}{section}
\numberwithin{figure}{section}
\tikzset{>=stealth', node distance=5cm}
\newcommand*{\QEDA}{\hfill\ensuremath{\square}}%
\DeclareMathOperator*{\minimize}{minimize}
\DeclareMathOperator*{\maximize}{maximize}
\DeclareMathOperator*{\argmin}{arg\,min}
\DeclareMathOperator*{\argmax}{arg\,max}
\DeclareMathOperator*{\subjectto}{subject\ to}
\DeclareMathOperator{\epi}{epi}
\newcommand{\penv}{{\normalfont\textsf{p}}_{\alpha,f}}
\DeclareMathOperator{\normal}{\mathcal{N}}
\DeclareMathOperator{\cl}{cl}
\DeclareMathOperator{\interior}{int}
\DeclareMathOperator{\pprox}{{\normalfont\textsf{pprox}}}
\DeclareMathOperator{\rpprox}{{\normalfont\mathfrak{P}}_{\alpha,f}}
\DeclareMathOperator{\prox}{{\normalfont\textsf{prox}}}
\DeclareMathOperator{\proj}{{\normalfont\textsf{proj}}}
\DeclareMathOperator{\cone}{cone}
\DeclareMathOperator{\dist}{{\normalfont\textsf{dist}}}
\DeclarePairedDelimiterX{\ip}[2]{\langle}{\rangle}{#1,#2}
\DeclareMathOperator{\dom}{dom}
\newcommand*{\medcap}{\mathbin{\scalebox{1.5}{\ensuremath{\cap}}}}%
\newcommand{\st}{\subjectto}
\newcommand{\infc}{\mathbin{\square}}
\newcommand{\sumc}{\mathbin{\square}}
\newcommand{\maxc}{\mathbin{\diamond}}
\newcommand{\pconv}{\mathbin{\diamond}}
\newcommand{\lscale}[2]{#1^+\!#2}
\newcommand{\braces}[1]{\left\{{#1}\right\}}
\newcommand{\polar}[1]{{#1}^{\circ}}
\newcommand{\R}{\mathbb{R}}
\newcommand{\Rp}{\R_{+}}
\newcommand{\Rpi}{\Rp\cup\{+\infty\}}
\newcommand{\minv}[1]{\min\braces{#1}}
\newcommand{\maxv}[1]{\max\braces{#1}}
\newcommand{\rec}[1]{{#1}_{\scriptscriptstyle\infty\!}}
\newcommand{\Cscr}{\mathcal{C}}
\newcommand{\Dscr}{\mathcal{D}}
\newcommand{\Kscr}{\mathcal{K}}
\newcommand{\Xscr}{\mathcal{X}}
\newcommand{\Yscr}{\mathcal{Y}}
\newcommand{\Cs}{_{\scriptscriptstyle\Cscr}}
\newcommand{\Ds}{_{\scriptscriptstyle\Dscr}}
\newcommand{\Ks}{_{\scriptscriptstyle\Kscr}}
\newcommand{\Ksp}{_{\scriptscriptstyle\Kscr^\circ}}
\let\save@mathaccent\mathaccent
\newcommand*\if@single[3]{%
  \setbox0\hbox{${\mathaccent"0362{#1}}^H$}%
  \setbox2\hbox{${\mathaccent"0362{\kern0pt#1}}^H$}%
  \ifdim\ht0=\ht2 #3\else #2\fi
  }
\newcommand*\rel@kern[1]{\kern#1\dimexpr\macc@kerna}
\newcommand*\widebar[1]{\@ifnextchar^{{\wide@bar{#1}{0}}}{\wide@bar{#1}{1}}}
\newcommand*\wide@bar[2]{\if@single{#1}{\wide@bar@{#1}{#2}{1}}{\wide@bar@{#1}{#2}{2}}}
\newcommand*\wide@bar@[3]{%
  \begingroup
  \def\mathaccent##1##2{%
    \let\mathaccent\save@mathaccent
    \if#32 \let\macc@nucleus\first@char \fi
    \setbox\z@\hbox{$\macc@style{\macc@nucleus}_{}$}%
    \setbox\tw@\hbox{$\macc@style{\macc@nucleus}{}_{}$}%
    \dimen@\wd\tw@
    \advance\dimen@-\wd\z@
    \divide\dimen@ 3
    \@tempdima\wd\tw@
    \advance\@tempdima-\scriptspace
    \divide\@tempdima 10
    \advance\dimen@-\@tempdima
    \ifdim\dimen@>\z@ \dimen@0pt\fi
    \rel@kern{0.6}\kern-\dimen@
    \if#31
      \overline{\rel@kern{-0.6}\kern\dimen@\macc@nucleus\rel@kern{0.4}\kern\dimen@}%
      \advance\dimen@0.4\dimexpr\macc@kerna
      \let\final@kern#2%
      \ifdim\dimen@<\z@ \let\final@kern1\fi
      \if\final@kern1 \kern-\dimen@\fi
    \else
      \overline{\rel@kern{-0.6}\kern\dimen@#1}%
    \fi
  }%
  \macc@depth\@ne
  \let\math@bgroup\@empty \let\math@egroup\macc@set@skewchar
  \mathsurround\z@ \frozen@everymath{\mathgroup\macc@group\relax}%
  \macc@set@skewchar\relax
  \let\mathaccentV\macc@nested@a
  \if#31
    \macc@nested@a\relax111{#1}%
  \else
    \def\gobble@till@marker##1\endmarker{}%
    \futurelet\first@char\gobble@till@marker#1\endmarker
    \ifcat\noexpand\first@char A\else
      \def\first@char{}%
    \fi
    \macc@nested@a\relax111{\first@char}%
  \fi
  \endgroup
}
\def\rbar{\skew3\bar r}
\def\xbar{\bar{x}}
\def\xhat{\widehat x}
\def\ybar{\skew3\bar y}
\title{Polar Convolution\thanks{Date: August 21, 2018; revised February 1, 2019.
\funding{MPF is supported by ONR award N00014-17-1-2009; TKP is supported by Hong Kong Research Grants Council award PolyU253008/15p.}}}
\author{%
  Michael P. Friedlander\thanks{Department of Computer Science and Department of Mathematics, University of British Columbia (\email{mpf@cs.ubc.ca}).}%
  \and Ives Mac\^edo\thanks{Amazon (\email{ives.macedo@gmail.com}); this author contributed to this work prior to joining Amazon.}
\and Ting Kei Pong\thanks{Department of Applied Mathematics, Hong Kong Polytechnic University (\email{tk.pong@polyu.edu.hk})}}
\begin{document}

\maketitle

\begin{abstract}
  The Moreau envelope is one of the key convexity-preserving
  functional operations in convex analysis, and it is central to the
  development and analysis of many approaches for convex
  optimization. This paper develops the theory for an analogous
  convolution operation, called the polar envelope, specialized to
  gauge functions. Many important properties of the Moreau envelope
  and the proximal map are mirrored by the polar envelope and its
  corresponding proximal map. These properties include smoothness of
  the envelope function, uniqueness and continuity of the proximal
  map, which play important roles in duality and in the construction
  of algorithms for gauge optimization. A suite of tools with which to
  build algorithms for this family of optimization problems is thus
  established.
\end{abstract}

\begin{keywords}
  gauge optimization, max convolution, proximal algorithms
\end{keywords}
\begin{AMS}
  90C15, 90C25
\end{AMS}

\section{Motivation}

Let $f_1$ and $f_2$ be proper convex functions that map a finite-dimensional Euclidean space $\Xscr$ to the extended reals $\R\cup\{+\infty\}$.
The \emph{infimal sum convolution} operation
\begin{equation}
  \label{eq:inf-convolution-2}
  (f_1\sumc f_2)(x) = \inf_{\mathclap{z}}\set{f_1(z) + f_2(x-z)}
\end{equation}
results in a convex function that is a ``mixture'' of $f_1$ and $f_2$.
As is usually the case in convex analysis, the operation is best
understood from the epigraphical viewpoint: if the infimal operation
attains its optimal value at each $x$, then the resulting function
$f_1\sumc f_2$ satisfies the relationship
\[
  \epi( f_1\sumc f_2) = \epi f_1 + \epi f_2.
\]
Sum convolution is thus also known as \emph{epigraphical
  addition} \citep[p.34]{roc70}.

This operation emerges in various forms in convex optimization. Take, for example, the function $f_2=\tfrac1{2\alpha}\|\cdot\|^2_2$ for some positive scalar $\alpha$. Then
\begin{equation} \label{eq:7}
  \left( f_1 \infc \tfrac1{2\alpha} \|\cdot\|_2^2\right)(x)
  = \inf_z\left\{f_1(z) + \tfrac1{2\alpha}\|x-z\|_2^2\right\}
\end{equation}
is the Moreau envelope of $f_1$, and the minimizing set
\[
  \prox_{\alpha f_1}(x) :=
  \argmin_z\left\{
    f_1(z) + \tfrac1{2\alpha}\|x-z\|^2_2
  \right\}
\]
is the proximal map of $\alpha f_1$. The proximal map features
prominently in first-order methods for minimizing nonsmooth convex
functions, where it appears as a key computational kernel. For
example, the proximal map is required at each iteration of a splitting
method, including the proximal-gradient and Douglas-Rachford
algorithms, often used in practice. \citet[Ch.~28]{Bauschke2011}
describe this class of methods in detail.

Sum convolution also appears naturally in the process of dualization
through conjugacy. The close relationship between sum convolution and
conjugacy is encapsulated by the formula
\begin{equation} \label{eq:4}
  (f_1\sumc f_2)^* = f_1^* + f_2^*,
\end{equation}
which reveals the duality between sum convolution and
addition~\cite[Theorem 16.4]{roc70}. This fact is often leveraged as a
device to solve nonsmooth problems via algorithms that are restricted
to differentiable functions, as follows: the objective function is
regularized, which results, via conjugacy, in a dual objective that is
the sum convolution of the conjugated regularized objective. Under the
appropriate conditions, Fenchel duality provides the needed
correspondence, as illustrated in \cref{fig:fenchel}.  The key
property of the regularized dual objective
$(f_1^*\sumc \tfrac1{2\alpha}\|\cdot\|_2^2)$, shown in the bottom
right of this figure, is that its gradient is
$\tfrac1\alpha$-Lipschitz
continuous~\cite[Prop.~12.29]{Bauschke2011}. A variety of first-order
algorithms can then be applied to the regularized dual problem in
order to approximate a solution of the regularized primal. This
approach is often used in practice. For example, it forms the backbone
for the TFOCS software package \citep{beckbobcandgrant:2011} and for
Nesterov's smoothing algorithm and its variants
\citep{beckteboulle:2012,nesterov:2005}.

\begin{figure}[t]\centering
\includegraphics[page=1]{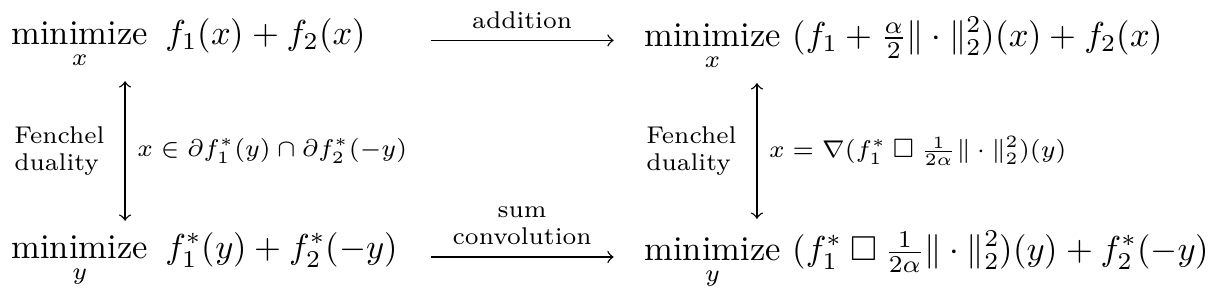}
\caption{The Fenchel duality correspondence for regularized
  problems. The formulas on the vertical arrows show how a primal
  solution $x$ may be recovered from a dual solution $y$
  \citep[Example~11.41]{rtrw:1998}. Strong convexity of the
  regularized problem ensures that $x$ can be recovered
  uniquely.\label{fig:fenchel}}
\end{figure}

The approach illustrated in \cref{fig:fenchel} is fully general,
in the sense that it applies to all of convex
optimization. %
However, such generality may unravel useful structures already present
in the original problem. Homogeneity, in particular, is not preserved
under the Moreau envelope.
This property
often appears in sparse optimization problems, and more generally, in
gauge optimization problems, which involve nonnegative sublinear
functions \citep{FriedlanderMacedoPong:2014}. The smoothing approach
described in \cref{fig:fenchel} does not apply in the gauge setting:
even if the problems on the left-hand side of the figure are gauge
problems, the functions on the right-hand side cannot be gauges,
because the regularization is not homogeneous. We are thus led to
consider an alternative convexity-preserving convolution operation
that can be used to construct smooth approximations and that appears
naturally as part of the dualization process.

\emph{Infimal max convolution} is a convexity-preserving
transformation similar to infimal sum convolution. However, the sum
operation in~\eqref{eq:inf-convolution-2} is replaced by a ``max''
between the two functions:
\begin{equation*}
  (f_1\maxc f_2)(x) =
  \inf_{z}\max\Set{f_1(z),\,f_2(x-z)}.
\end{equation*}
Max convolution first appears in \citet[Theorem 5.8]{roc70} as an
example of a convexity-preserving operation, and subsequently is studied
in detail by \citet{SeegerVolle95}.  As with sum convolution, this
operation mixes $f_1$ and $f_2$, except that it results in a function
whose level sets are the sums of the level sets of $f_1$ and
$f_2$. Seeger and Volle thus refer to max convolution as
\emph{level-set addition}.

Our main purpose in this paper is to develop further the theory of the
max-convolution operation, particularly when applied to gauges. As we
will describe, this operation, coupled with gauge duality, leads to a
correspondence that is completely analogous to the transformations
shown in \cref{fig:fenchel}.

\subsection{Contributions}

The duality between conjugacy and sum convolution, encapsulated
in~\eqref{eq:4}, is one of the main ingredients necessary for deriving
the smooth dual approach described by
\cref{fig:fenchel}. %
We derive in \cref{sec:polar-convolution} an analogous identity for
max convolution for gauges. When $\kappa_1$ and $\kappa_2$ are gauges,
\[
  (\kappa_1 \maxc \kappa_2)^\circ = \kappa_1^\circ + \kappa_2^\circ.
\]
Here, the map $\kappa\mapsto \kappa^\circ$ is the polar transform of
the gauge $\kappa$. This identity shows that for gauge functions,
polarity enjoys a relationship with max convolution that is analogous to
the relationship between Fenchel conjugation and sum convolution.
Because many useful properties related to polarity accrue when
specializing max convolution to gauge functions, we term the operation
\emph{polar convolution}.

A case of special importance is the polar convolution of a gauge with
the 2-norm, rather than the squared 2-norm used
in~\eqref{eq:7}. Under mild conditions,
\[
  \left(\kappa \maxc \tfrac1\alpha\|\cdot\|_2 \right)(x)
  =\inf_z\max\left\{ \kappa(z),\, \tfrac1\alpha\|x-z\|_2 \right\}
\]
is smooth at $x$, as we prove in \cref{sec:polar-envelope}. We
dub this function the \emph{polar envelope} of $\kappa$. In that
section, we describe the subdifferential properties of the polar
envelope, along with properties of the corresponding polar proximal map. In
\cref{sec:computing_polar_env} we demonstrate how to compute these
quantities for several important examples.

Consider again the duality correspondence shown in
\cref{fig:fenchel}. We derive in \cref{sec:smooth_dual} an analogous
correspondence that replaces Fenchel duality with gauge duality, and
sum convolution with max convolution. These relationships are
summarized by \cref{fig:gauge}. In this figure, $\kappa$ is a closed
gauge, $\Cscr$ is a closed convex set that does not contain the
origin, and $\Cscr':= \set{y\mid \ip{x}{y} \ge 1}$ is the antipolar
set of $\Cscr$. As is the case with Fenchel duality, the regularized
dual problem is now smooth (cf.~\cref{sec:polar-envelope}), and thus
first-order methods may be applied to this problem. The inclusion on
the left-hand side of \cref{fig:gauge} follows from
\citet[Corollary~3.7]{ABDFM18}.

\begin{figure}[t]\centering
\includegraphics[page=2]{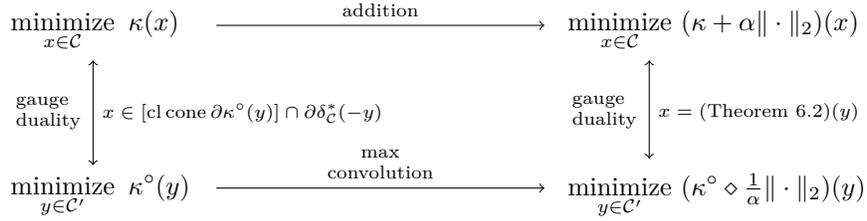}
\caption{The gauge duality correspondence for regularized gauge
  problems. The formulas on the vertical arrows show
  how a primal solution $x$ may be recovered from a dual solution
  $y$. Under gauge duality, the homogeneous regularizer ensures that $x$ can be recovered uniquely.
  \label{fig:gauge}}
\end{figure}

As another illustrative example of how the polar envelope might be
applied, we consider in \cref{sec:polar-prox-algo} how the polar
envelope can be used to develop proximal-point-like algorithms for
convex optimization problems.

\subsection{Notation}

Throughout this paper, the 2-norm of a vector $x$ is denoted by
$\|x\|_2=\sqrt{\ip x x}$, but we often simply denote it by $\|x\|$.
The $\alpha$-level set for a function $f$ is denoted by
$[f\le\alpha]=\set{x|f(x)\le\alpha}$. For a convex set $\Cscr$, let
$\dist\Cs(x):=\inf\set{\|x-z\| | z\in\Cscr}$ denote the Euclidean
distance of $x$ to the set $\Cscr$, and
$\proj\Cs(x):=\argmin\set{\|x-z\||z\in\Cscr}$ denote the corresponding
projection. The recession cone of $\Cscr$ is denoted by
$\Cscr_\infty$, its polar is denoted by
$\Cscr^\circ := \set{u\mid \ip{u}{x} \le 1\ \forall x\in \Cscr}$. When
$\Cscr$ is a cone, its polar simplifies to the closed cone
$\Cscr^\circ=\set{u\mid \ip{u}{x} \le 0\ \forall x\in
  \Cscr}$. Finally, fractions such as $(1/(2\alpha))$ are often
abbreviated as $(1/2\alpha)$.

\section{The max convolution operation}

In this section we review the max convolution operation, and establish
a baseline set of results for later sections. Max convolution is
also known as a \emph{level sum} \citep{SeegerVolle95} because
$(f_1\maxc f_2)$ is the sum of the strict level sets of $f_1$ and
$f_2$:
\begin{equation}\label{eq:6}
  [(f_1\maxc f_2)<\lambda] = [f_1<\lambda] + [f_2<\lambda]\quad\forall\lambda\in\R;
\end{equation}
see \citet[Page~40]{roc70}. This property mirrors that for
sum convolution, where instead it is the strict epigraphs that are
summed:
\[
  \set{(x,\alpha)|(f_1\sumc f_2)(x)<\alpha}
  =\set{(x_1,\alpha_1)|f_1(x_1)<\alpha_1}
  +\set{(x_2,\alpha_2)|f_2(x_2)<\alpha_2}.
\]
In both cases, if the infimal operations that define these
convolutions are attained, then the convolution is \emph{exact}, and
all of the strict inequalities become weak. \citet{Seeger11} uses the
term \emph{inverse sum} to describe the convolution $(f_1\maxc f_2)$
when $f_1$ and $f_2$ are continuous nonnegative sublinear
functions. We follow \citet[Theorem~2.1.3(ix)]{Za02} and adopt the
term ``max convolution" to highlight its convolutional nature.

The perspective transform for any proper convex function
$f:\Xscr\to\R\cup\{+\infty\}$ creates a convex function whose epigraph
is $\cone(\epi f\times\{1\})$, which is the cone generated by the
``lifted'' set. The perspective transform does not necessarily
preserve closure, and so it is convenient for us to redefine a closed
version by
\begin{equation} \label{eq:12}
f^\pi(x,\lambda) := \begin{cases}
  \lambda f\left(\lambda^{-1}x\right) & {\rm if}\ \lambda > 0,\\
  \rec{f}(x) & {\rm if}\ \lambda = 0,\\
  \infty & {\rm if}\ \lambda < 0,
\end{cases}
\end{equation}
where $\rec f$ is the recession function of $f$
\citep[Definition~2.5.1]{AuT03}.  The perspective $f^\pi$ of a proper
closed convex function $f$ is proper closed convex~\cite[Page~67]{roc70}.

\citet{SeegerVolle95} derive an expression for the conjugate of the
max convolution. (Seeger and Volle attribute this result to
\citet{Attouch:90} and \citet{Zalinescu:inprep}.) Here we provide a
slightly rephrased version of that result expressed in terms of the
perspective function. We adopt the notation
\[
  (\lscale{\lambda}{f})(x) :=
  \begin{cases}
    \lambda f(x) & \mbox{if $\lambda > 0$,}
  \\\delta_{\dom f}(x) & \mbox{if $\lambda=0$,}
  \end{cases}
\]
which emphasizes the role of the limiting behavior as $\lambda\to0$ from the right.
Thus,
\[
  \partial(\lscale{\lambda}f)(x) =
  \begin{cases}
    \set{\lambda z \mid z\in\partial f(x)} & \mbox{if $\lambda>0$,}
  \\\normal(x\mid\dom f) & \mbox{if $\lambda=0$.}
  \end{cases}
\]

\begin{proposition}[Seeger and Volle {[1995; Proposition~4.1]}]
  Let $f_1$ and $f_2$ be proper convex functions. Then
  \begin{equation*}%
    (f_1\maxc f_2)^*(y){}
    = \inf \set{ (f_1^*)^\pi(y,\lambda_1)
               + (f^*_2)^\pi(y,\lambda_2) | \lambda_1+\lambda_2=1,\ \lambda_i\ge0}.
  \end{equation*}
\end{proposition}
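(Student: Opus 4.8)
The plan is to expand the conjugate directly from its definition and to replace the inner maximum by the elementary identity $\max\{a,b\}=\sup_{\lambda_1+\lambda_2=1,\ \lambda_i\ge0}(\lambda_1a+\lambda_2b)$, after which the result follows from a single minimax interchange. First I would combine the outer supremum defining the conjugate with the infimum defining the convolution into one joint supremum, and change variables by $x_1=z$, $x_2=x-z$:
\[
  (f_1\maxc f_2)^*(y)
  = \sup_{x,z}\Set{\ip{x}{y} - \max\{f_1(z),\,f_2(x-z)\}}
  = \sup_{x_1,x_2}\Set{\ip{x_1+x_2}{y} - \max\{f_1(x_1),\,f_2(x_2)\}}.
\]
Applying the max identity and using the scaled-function convention $\lscale{\lambda}{f}$ to absorb the endpoints $\lambda_i=0$ (where $\lscale{0}{f_i}=\delta_{\dom f_i}$), the negated maximum turns into an infimum over the unit simplex, so that
\[
  (f_1\maxc f_2)^*(y)
  = \sup_{x_1,x_2}\ \inf_{\lambda_1+\lambda_2=1,\ \lambda_i\ge0}
    \Set{\ip{x_1+x_2}{y} - (\lscale{\lambda_1}{f_1})(x_1) - (\lscale{\lambda_2}{f_2})(x_2)}.
\]

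The crux is to exchange the supremum over $(x_1,x_2)$ with the infimum over $\lambda=(\lambda_1,\lambda_2)$. I would invoke Sion's minimax theorem: the objective is jointly concave in $(x_1,x_2)$ (it is a sum of the concave functions $\ip{x_i}{y}-(\lscale{\lambda_i}{f_i})(x_i)$) and upper semicontinuous there, it is affine—hence convex and continuous—in $\lambda$, and the feasible set $\{\lambda_1+\lambda_2=1,\ \lambda_i\ge0\}$ is compact and convex. I expect this interchange to be the main obstacle: the upper semicontinuity in $(x_1,x_2)$ requires $f_1,f_2$ to be closed, which is harmless here because conjugation is insensitive to closure (so one may replace each $f_i$ by $\cl f_i$ at the outset), and the endpoints $\lambda_i=0$ must be treated through the $\lscale{\lambda_i}{f_i}$ convention rather than naive multiplication by zero.

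Once the interchange is justified, the inner supremum separates across $x_1$ and $x_2$, leaving
\[
  \inf_{\lambda_1+\lambda_2=1,\ \lambda_i\ge0}
  \Set{ \sup_{x_1}[\ip{x_1}{y}-(\lscale{\lambda_1}{f_1})(x_1)]
      + \sup_{x_2}[\ip{x_2}{y}-(\lscale{\lambda_2}{f_2})(x_2)] },
\]
and it remains only to identify each inner supremum with a perspective value. For $\lambda_i>0$ one has $\sup_{x_i}\{\ip{x_i}{y}-\lambda_i f_i(x_i)\}=\lambda_i f_i^*(\lambda_i^{-1}y)=(f_i^*)^\pi(y,\lambda_i)$, directly from the definitions of the conjugate and the perspective~\eqref{eq:12}. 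For $\lambda_i=0$, the supremum is $\sup_{x_i}\{\ip{x_i}{y}-\delta_{\dom f_i}(x_i)\}=\support_{\dom f_i}(y)$, which equals the recession function $\rec{(f_i^*)}(y)=(f_i^*)^\pi(y,0)$ by the standard fact that the recession function of a closed proper convex function is the support function of the domain of its conjugate. Substituting these two cases into the displayed infimum yields exactly the claimed expression.
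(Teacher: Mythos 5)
Your proposal is correct in substance but takes a genuinely different route from the paper. The paper does not derive the conjugate formula at all: it quotes Seeger and Volle's identity
\[
  (f_1\maxc f_2)^*(y)
  = \inf\left\{\,\bigl([\lscale{\lambda_1}{f_1}]^* + [\lscale{\lambda_2}{f_2}]^*\bigr)(y) \;\middle|\; \lambda_1+\lambda_2=1,\ \lambda_i\ge 0\right\}
\]
as a black box, and its entire proof consists of rewriting $(\lscale{\lambda}{g})^*$ as a perspective value via the fact $\delta^*_{\dom g}=\rec{(g^*)}$. You instead re-derive the quoted identity from first principles: joint supremum, linearization of the max over the unit simplex, a Sion minimax interchange, and then the same perspective identification the paper performs. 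Your version is self-contained and explains where the simplex comes from (it is the dual description of the pointwise max); the price is the minimax machinery and its semicontinuity bookkeeping, which the paper sidesteps entirely by citation.

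Two points of that bookkeeping need repair, though both are patchable. First, your verification of the Sion hypotheses is incomplete at the endpoints of the simplex: at $\lambda=(0,1)$ the objective contains the term $-\delta_{\dom f_1}(x_1)$, which is upper semicontinuous only if $\dom f_1$ is closed, and closedness of $f_1$ does not give this (e.g.\ $f_1(x)=1/x$ for $x>0$, $+\infty$ otherwise). Moreover, Sion's theorem is stated for real-valued functions, while your objective takes the value $-\infty$. Both issues disappear if you restrict the outer supremum to the convex set $\dom f_1\times\dom f_2$, which is harmless because outside this set the objective equals $-\infty$ for every $\lambda$ (so neither the sup--inf nor the inf--sup value changes); on this set the indicator terms vanish, the objective is the finite function $\ip{x_1+x_2}{y}-\lambda_1 f_1(x_1)-\lambda_2 f_2(x_2)$, affine in $\lambda$, and upper semicontinuous in $(x_1,x_2)$ for every fixed $\lambda$, endpoints included, once $f_1,f_2$ are closed. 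Second, the phrase ``conjugation is insensitive to closure'' strictly applies to the function being conjugated, not to the data $f_1,f_2$ sitting inside the convolution; what you actually need is $(f_1\maxc f_2)^*=(\cl f_1\maxc\cl f_2)^*$. This is true---because $(x_1,x_2)\mapsto\max\{\cl f_1(x_1),\cl f_2(x_2)\}$ is the closure of $(x_1,x_2)\mapsto\max\{f_1(x_1),f_2(x_2)\}$, and both sides of the desired equality are the conjugates of these two functions evaluated at $(y,y)$---but it deserves that one line of justification. With these two patches, your argument is a complete and valid proof.
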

\begin{proof}
  \citet[Proposition~4.1]{SeegerVolle95} show that
  \begin{equation}\label{conjugate_maxc}
    (f_1\maxc f_2)^*(y)
    = \inf \set{([\lscale{\lambda_1}{f_1}]^* + [\lscale{\lambda_2}{f_2}]^*)(y) |
      \lambda_1+\lambda_2=1,\ \lambda_i\ge0}.
  \end{equation}
  For a proper convex function $g$ and any nonnegative scalar
  $\lambda$,
  \[
    (\lscale{\lambda}{g})^*(y) = \begin{cases}
      \lambda g^*(\lambda^{-1}y) & {\rm if}\ \lambda > 0,\\
      \delta_{\dom g}^*(y) & {\rm if}\ \lambda = 0.
    \end{cases}
  \]
  The desired conclusion thus follows from~\eqref{conjugate_maxc},
  from the definition of perspective functions, and from
  \citet[Theorem~2.5.4(a)]{AuT03}, which asserts
  $\delta^*_{\dom g} = \rec{(g^*)}$.
\end{proof}

We also recall the following formula for the subdifferential of the
max convolution $f_1\maxc f_2$, originally given by
\citet[Proposition~4.3]{SeegerVolle95}. This result will be useful in
establishing the differential properties of the max convolution
applied to gauge functions; cf. \cref{sec:polar-envelope} and
\cref{sec:polar-prox-algo}.

\begin{proposition}[Seeger and Volle {[1995]}]\label{Prop_sub}
  Let $f_1$ and $f_2$ be proper closed convex functions. Let
  $\xbar\in \dom (f_1\maxc f_2)$ and suppose that
  $(f_1\maxc f_2)(\xbar) = \max\set{f_1(\xbar_1),\,f_2(\xbar_2)}$ for
  some $\xbar_1+\xbar_2 = \xbar$, with $\xbar_1\in \dom f_1$ and
  $\xbar_2\in \dom
  f_2$. %
  Then
  \[
    \partial (f_1\maxc f_2)(\xbar) =
    \bigcup \Set{%
      \partial(\lscale{\mu}{f_1})(\xbar_1)
      \cap
      \partial(\lscale{[1-\mu]}{f_2})(\xbar_2) |
      \mu\in\Gamma},
  \]
  where $\Gamma=\set{\mu\in [0,1]| \lscale{\mu}{f_1}(\xbar_1) + \lscale{(1-\mu)}{f_2}(\xbar_2) = (f_1\maxc f_2)(\xbar)}$.
\end{proposition}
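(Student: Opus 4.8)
The plan is to derive the subdifferential formula from the conjugacy relationship established in the preceding Proposition, using the standard Fenchel-Young characterization of the subdifferential. Recall that for a proper closed convex function $g$, one has $v\in\partial g(\xbar)$ if and only if $g(\xbar)+g^*(v)=\ip{\xbar}{v}$. I would apply this to $g=f_1\maxc f_2$, using the formula $(f_1\maxc f_2)^*(v)=\inf\set{(f_1^*)^\pi(v,\lambda_1)+(f_2^*)^\pi(v,\lambda_2)\mid \lambda_1+\lambda_2=1,\ \lambda_i\ge0}$ from the previous Proposition. The goal is to show that the Fenchel-Young equality holds exactly when $v$ lies in one of the intersections $\partial(\lscale{\mu}{f_1})(\xbar_1)\cap\partial(\lscale{[1-\mu]}{f_2})(\xbar_2)$ for some admissible $\mu\in\Gamma$.

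First I would unpack the perspective terms: since $(f_i^*)^\pi(v,\lambda_i)=(\lscale{\lambda_i}{f_i})^*(v)$ by the identity proved in the previous Proposition, the conjugate of the max convolution is $\inf_{\mu\in[0,1]}\big[(\lscale{\mu}{f_1})^*(v)+(\lscale{[1-\mu]}{f_2})^*(v)\big]$. The Fenchel-Young equality $(f_1\maxc f_2)(\xbar)+(f_1\maxc f_2)^*(v)=\ip{\xbar}{v}$ then becomes a statement about this infimum. Writing $\ip{\xbar}{v}=\ip{\xbar_1}{v}+\ip{\xbar_2}{v}$ and using the Fenchel-Young inequalities $(\lscale{\mu}{f_1})^*(v)\ge\ip{\xbar_1}{v}-\lscale{\mu}{f_1}(\xbar_1)$ and similarly for the second term, I would show that equality across the sum forces, for the optimal $\mu$, simultaneous equality in both Fenchel-Young inequalities, i.e. $v\in\partial(\lscale{\mu}{f_1})(\xbar_1)$ and $v\in\partial(\lscale{[1-\mu]}{f_2})(\xbar_2)$. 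The hypothesis $(f_1\maxc f_2)(\xbar)=\max\set{f_1(\xbar_1),f_2(\xbar_2)}$ together with the level-sum structure is what pins down that the relevant $\mu$ must satisfy $\lscale{\mu}{f_1}(\xbar_1)+\lscale{[1-\mu]}{f_2}(\xbar_2)=(f_1\maxc f_2)(\xbar)$, which is exactly the defining condition for $\mu\in\Gamma$.

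The converse direction is more direct: given $\mu\in\Gamma$ and $v$ in the stated intersection, the two Fenchel-Young equalities combine to yield $(\lscale{\mu}{f_1})^*(v)+(\lscale{[1-\mu]}{f_2})^*(v)=\ip{\xbar}{v}-(f_1\maxc f_2)(\xbar)$, and since the left side is an upper estimate for $-(f_1\maxc f_2)^*(v)+\ip{\xbar}{v}$ while Fenchel-Young always gives the reverse, one concludes $v\in\partial(f_1\maxc f_2)(\xbar)$. I would need to take care that the infimum defining the conjugate is actually attained at the relevant $\mu$, and that the closedness hypotheses on $f_1,f_2$ guarantee the perspective functions behave well at the boundary $\lambda=0$ (where the recession-function branch enters); this is where the identity $\delta^*_{\dom g}=\rec{(g^*)}$ and the closedness of $f_i^\pi$ are used.

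\textbf{The main obstacle} I expect is the careful handling of the boundary case $\mu\in\set{0,1}$, where one of the scaled functions degenerates to the indicator of a domain and its subdifferential becomes a normal cone. Ensuring that the Fenchel-Young bookkeeping remains valid—and that the attained infimum genuinely corresponds to an admissible $\mu$ rather than a limiting value—requires the properness and closedness assumptions and the limiting convention encoded in the $\lscale{\lambda}{f}$ notation. The cleanest route may be to invoke Seeger and Volle's original result directly and simply verify that the rephrasing in terms of the $\partial(\lscale{\mu}{f_i})$ intersections matches their formula term by term, rather than re-deriving everything from Fenchel-Young; but the self-contained argument above is the more transparent one to present.
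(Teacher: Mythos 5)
There is no internal proof to compare against here: the paper states this result as recalled from Seeger and Volle [1995, Proposition~4.3] without proof, so your closing remark---that the cleanest route is to invoke their result and match notation term by term---describes exactly what the paper does. Your self-contained derivation from the conjugate formula is therefore a genuinely different route, and it is correct; what it buys is that the proposition becomes a formal consequence of the preceding proposition via Fenchel--Young equality, rather than an external citation. The two obstacles you flag are the right ones, and both resolve as you anticipate. Attainment: for any $v$ with $(f_1\maxc f_2)^*(v)<+\infty$, the map $\mu\mapsto(\lscale{\mu}{f_1})^*(v)+(\lscale{[1-\mu]}{f_2})^*(v)$ coincides on all of $[0,1]$ with $\mu\mapsto(f_1^*)^\pi(v,\mu)+(f_2^*)^\pi(v,1-\mu)$ (closedness of the $f_i$ enters only at the endpoints, through $\delta^*_{\dom f_i}=\rec{(f_i^*)}$), and the latter is lower semicontinuous on the compact interval $[0,1]$ because closed perspectives of the proper closed convex functions $f_i^*$ are closed; hence a minimizing $\mu^*$ exists. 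Identification of $\Gamma$: the one step you should make explicit is the elementary bound $\lscale{\mu}{f_1}(\xbar_1)+\lscale{[1-\mu]}{f_2}(\xbar_2)\le\max\set{f_1(\xbar_1),\,f_2(\xbar_2)}=(f_1\maxc f_2)(\xbar)$ for every $\mu\in[0,1]$, which holds because a convex combination never exceeds a maximum and because at $\mu\in\set{0,1}$ the conventions contribute $\delta_{\dom f_i}(\xbar_i)=0$. Sandwiching this bound between the summed Fenchel--Young inequalities and the value $\ip{\xbar}{v}-(f_1\maxc f_2)(\xbar)$ forces equality throughout at $\mu^*$: the two Fenchel--Young equalities give $v\in\partial(\lscale{\mu^*}{f_1})(\xbar_1)\cap\partial(\lscale{[1-\mu^*]}{f_2})(\xbar_2)$, and equality in the convex-combination bound gives precisely $\mu^*\in\Gamma$. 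Your converse direction is correct as written, and it even disposes of the degenerate case where $f_1\maxc f_2$ is improper, since then $(f_1\maxc f_2)^*\equiv+\infty$ and both sides of the formula are empty.
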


\section{Polar convolution}\label{sec:polar-convolution}

Any nonnegative convex function that is positively homogeneous and
zero at the origin is called a {\em gauge}. This family of functions
includes, for example, all norms and semi-norms, and any support
function on a set whose convex hull includes the origin.  When max
convolution is specialized to gauge functions, the analogues between
the max convolution and the sum convolution deepen. Here is our formal
definition of polar convolution.
\begin{definition}[Polar convolution]
  Let $\kappa_1$ and $\kappa_2$ be gauges. Their
  \emph{polar convolution} is defined by $\kappa_1\pconv\kappa_2$.
\end{definition}
Although the max convolution of two general proper convex functions can
be improper (i.e., it may attain $-\infty$), the polar convolution of
two gauges is necessarily a proper convex function. Indeed, one can
show that $\kappa_1\pconv\kappa_2$ is a gauge. Moreover, we make the
following immediate observation, whose simple proof is omitted.
\begin{proposition}[Lower approximation]
  \label{cor:bounded} Let $\kappa_1$ and $\kappa_2$ be gauges. Then
  $\kappa_1\pconv\kappa_2$ is a gauge and
  $(\kappa_1\pconv\kappa_2)(x)\leq\minv{\kappa_1(x),\,\kappa_2(x)}$
  for all $x\in\Xscr$. In particular, if either $\dom\kappa_1=\Xscr$ or
  $\dom\kappa_2=\Xscr$, then $\dom(\kappa_1\pconv\kappa_2)=\Xscr$.
\end{proposition}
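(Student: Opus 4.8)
The plan is to verify directly the three defining properties of a gauge—nonnegativity together with vanishing at the origin, positive homogeneity, and convexity—and then to read off the pointwise bound from two convenient feasible points in the defining infimum. Since $\kappa_1$ and $\kappa_2$ are gauges, the polar convolution coincides with the max convolution, so throughout I would work from the expression
\[
  (\kappa_1\pconv\kappa_2)(x)=\inf_z\max\set{\kappa_1(z),\,\kappa_2(x-z)}.
\]

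First I would establish the gauge properties. Nonnegativity is immediate, since each $\kappa_i\ge0$ forces every quantity $\max\set{\kappa_1(z),\kappa_2(x-z)}$ to be nonnegative, and hence so is the infimum; in particular the infimum never equals $-\infty$. Evaluating at the origin and choosing $z=0$ gives $\max\set{\kappa_1(0),\kappa_2(0)}=0$, so $(\kappa_1\pconv\kappa_2)(0)=0$ and the function is proper. Positive homogeneity follows from the substitution $z=tw$ for $t>0$: positive homogeneity of each $\kappa_i$ lets me factor $t$ out of the maximum and therefore out of the infimum. For convexity I would observe that the map $(z,x)\mapsto\max\set{\kappa_1(z),\kappa_2(x-z)}$ is jointly convex—being the pointwise maximum of two functions, each convex in the pair $(z,x)$—and that partial minimization of a jointly convex function over one block of variables yields a convex function of the remaining variables. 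This last step is the only point that invokes a standard convex-analytic fact rather than a one-line manipulation, and I regard it as the crux of the argument (the level-set identity~\eqref{eq:6} offers an alternative route, since it exhibits the strict sublevel sets as sums of convex sets).

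For the pointwise bound, I would exhibit two feasible points. Taking $z=x$ gives $\max\set{\kappa_1(x),\kappa_2(0)}=\kappa_1(x)$, while taking $z=0$ gives $\max\set{\kappa_1(0),\kappa_2(x)}=\kappa_2(x)$, where in each case I use $\kappa_i(0)=0$ and $\kappa_i\ge0$. Hence $(\kappa_1\pconv\kappa_2)(x)\le\minv{\kappa_1(x),\kappa_2(x)}$ for all $x\in\Xscr$. The domain claim is then immediate: if, say, $\dom\kappa_1=\Xscr$, then $\kappa_1(x)<\infty$ for every $x$, so the bound forces $(\kappa_1\pconv\kappa_2)(x)<\infty$ for every $x$, that is, $\dom(\kappa_1\pconv\kappa_2)=\Xscr$; the case $\dom\kappa_2=\Xscr$ is symmetric.
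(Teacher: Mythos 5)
Your proof is correct and complete. The paper itself omits the proof of this proposition as ``simple,'' and your argument---nonnegativity and vanishing at the origin by inspection, positive homogeneity via the substitution $z=tw$, convexity by partial minimization of the jointly convex map $(z,x)\mapsto\max\{\kappa_1(z),\,\kappa_2(x-z)\}$, and the upper bound from the two feasible points $z=x$ and $z=0$---is precisely the routine verification the authors had in mind.
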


In the same way that every convex function can be paired with its Fenchel conjugate, every gauge function $\kappa$ can be paired with its polar, defined by
\begin{equation} \label{eq:5}
\kappa^\circ(y) = \inf\set{\mu > 0\mid \ip{x}{y} \le \mu\kappa(x) \ \forall x}.
\end{equation}
This leads to the polar-gauge inequality
\begin{equation} \label{eq:9}
  \ip x y \le \kappa(x)\cdot\kappa^\circ(y) \quad
  \forall x\in\dom\kappa,\ \forall y\in\dom\kappa^\circ,
\end{equation}
and thus the polar $\polar\kappa$ is the function that satisfies this
inequality most tightly. The following lemma reveals the duality
between polar convolution and addition under the polarity operation.
\begin{lemma}[Polar convolution identity]
  \label{lem1}
  Let $\kappa_1$ and $\kappa_2$ be gauges. Then
  \begin{equation}\label{haharel1}
    (\kappa_1\pconv\kappa_2)^\circ=\kappa_1^\circ+\kappa_2^\circ.
  \end{equation}
  If either $\kappa_1$ or $\kappa_2$ is also continuous, then
  \begin{equation*}
    \kappa_1\pconv\kappa_2=(\kappa_1^\circ+\kappa_2^\circ)^\circ.
  \end{equation*}
\end{lemma}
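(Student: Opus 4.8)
The plan is to prove the first identity \eqref{haharel1} through the level-set characterization of the polar, and then to obtain the second identity by applying the polar transform once more and invoking the bipolar theorem for gauges. Recall that the polar convolution $\kappa_1\pconv\kappa_2$ is the max convolution of \eqref{eq:6}, so the level-sum identity is available to us. The central observation is that, directly from the definition \eqref{eq:5}, the polar of any gauge $g$ coincides with the support function of its unit sublevel set,
\[
  g^\circ(y)=\sup\set{\ip{x}{y}|g(x)\le 1}=\sigma_{[g\le 1]}(y),
\]
an identity that uses only nonnegativity and positive homogeneity of $g$ and therefore needs no closedness hypothesis. A short homogeneity argument also shows that the strict and non-strict sublevel sets share the same support function, i.e. $\sigma_{[g<1]}=\sigma_{[g\le 1]}$.

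With these reductions in hand, I would specialize \eqref{eq:6} at $\lambda=1$ to get $[\kappa_1\pconv\kappa_2<1]=[\kappa_1<1]+[\kappa_2<1]$ and take support functions of both sides. Since the support function of a Minkowski sum is the sum of the support functions, this yields
\[
  (\kappa_1\pconv\kappa_2)^\circ=\sigma_{[\kappa_1<1]}+\sigma_{[\kappa_2<1]}=\kappa_1^\circ+\kappa_2^\circ,
\]
which is exactly \eqref{haharel1}. As an independent sanity check, the polar-gauge inequality \eqref{eq:9} applied to the splitting $x=z+(x-z)$ and bounded above by $\max\set{\kappa_1(z)|\kappa_2(x-z)}(\kappa_1^\circ(y)+\kappa_2^\circ(y))$, then minimized over $z$, recovers the easy direction $(\kappa_1\pconv\kappa_2)^\circ\le\kappa_1^\circ+\kappa_2^\circ$; the virtue of the level-set route is that it delivers both inequalities at once.

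For the second identity I would first argue that the continuity hypothesis forces $\kappa_1\pconv\kappa_2$ to be a closed gauge. If, say, $\kappa_1$ is continuous then $\dom\kappa_1=\Xscr$, so \cref{cor:bounded} gives $\dom(\kappa_1\pconv\kappa_2)=\Xscr$; a finite-valued convex function on $\Xscr$ is continuous, hence lower semicontinuous and closed. Applying the polar transform to \eqref{haharel1} and using the bipolar theorem $\kappa^{\circ\circ}=\kappa$, valid for closed gauges, then gives
\[
  \kappa_1\pconv\kappa_2=(\kappa_1\pconv\kappa_2)^{\circ\circ}=(\kappa_1^\circ+\kappa_2^\circ)^\circ,
\]
as claimed.

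The two reductions in the first part (the support-function form of the polar and the irrelevance of strict versus non-strict sublevel sets) are routine but must be stated carefully so that no closedness is smuggled in, since \eqref{haharel1} is asserted for arbitrary gauges. The one genuine obstacle is the closedness requirement in the second part: the bipolar identity $\kappa^{\circ\circ}=\kappa$ can fail for non-closed gauges, and it is precisely the continuity of one factor that guarantees, via \cref{cor:bounded}, that $\kappa_1\pconv\kappa_2$ is finite-valued and therefore continuous and closed. Locating and justifying this use of the hypothesis is the crux of the argument.
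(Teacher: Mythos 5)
Your proposal is correct and takes essentially the same route as the paper: the first identity is proved by specializing the level-sum property \eqref{eq:6} to level $1$, identifying the polar with the support function of the unit level set, and noting that strict versus non-strict level sets give the same supremum (the paper writes this as a single chain of suprema, citing \citet[Theorem~7.6]{roc70} where you give a direct scaling argument), while the second identity follows in both treatments from \cref{cor:bounded} and the bipolar theorem applied to the resulting continuous gauge. There are no gaps.
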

\begin{proof}
  It follows directly from the definition~\eqref{eq:5} of a polar gauge $\kappa$ that
  \[
    \polar{\kappa}(y) = \sup_x\set{\ip{x}{y}\mid\kappa(x)\le 1}
  \]
  for any $y$. Thus, a direct computation shows that for any $y$,
  \begin{equation*}
    \begin{split}
      \polar{(\kappa_1\pconv\kappa_2)}(y)
        & = \sup_x\set{\ip{x}{y}\mid(\kappa_1\pconv\kappa_2)(x)\le 1} \overset{\rm(i)}= \sup_x\set{\ip{x}{y}\mid(\kappa_1\pconv\kappa_2)(x)< 1}\\
        & = \sup_{x,\,x_1}\set{\ip{x}{y}\mid\kappa_1(x_1)< 1,\, \kappa_2(x-x_1) < 1}\\
        & = \sup_{x,\,x_1}\set{\ip{x-x_1 + x_1}{y}\mid\kappa_1(x_1)< 1,\, \kappa_2(x-x_1) < 1}\\
        & \overset{\rm(ii)}= \sup_{x_1,\,x_2}\set{\ip{x_1 + x_2}{y}\mid\kappa_1(x_1)\le 1,\, \kappa_2(x_2) \le 1}\\
        & = \kappa_1^\circ(y) + \kappa_2^\circ(y),
    \end{split}
  \end{equation*}
  where equalities (i) and (ii) follow from \citet[Theorem~7.6]{roc70}
  and the continuity of the linear function $x\mapsto \ip{x}{y}$. Now,
  if in addition either $\kappa_1$ or $\kappa_2$ is continuous, then
  we see from \cref{cor:bounded} that
  $\kappa_1\pconv\kappa_2$ is continuous. The second conclusion now
  follows immediately by taking the polar of both sides
  of~\eqref{haharel1} and applying
  \citet[Proposition~2.1(ii)]{FriedlanderMacedoPong:2014} to the
  continuous gauge $\kappa_1\pconv\kappa_2$.
\end{proof}

\subsection{Polar convolution as a sum of sets}

Polar convolution can also be viewed as a function induced by a convex
set that defines its unit level set. This connection is most
transparent when viewing gauges via their representation as a
Minkowski functional $\gamma\Ds$ for some nonempty convex set
$\Dscr$:
\[
  \kappa(x) = \gamma\Ds(x) := \inf\set{\lambda\ge0|x\in\lambda\Dscr}.
\]
We first need the following result, which relates the sum of Minkowski
functions of two sets to the Minkowski function of a sum of the
sets.
\begin{lemma} \label{lem:sum-gauges}
  Let $\Dscr_1$ and $\Dscr_2$ be closed convex sets containing the origin. Then
  \begin{equation}\label{eq:2}
    \gamma_{\scriptscriptstyle\Dscr_1} + \gamma_{\scriptscriptstyle\Dscr_2} = \gamma_{\scriptscriptstyle(\Dscr_1^\circ+\Dscr_2^\circ)^\circ}.
  \end{equation}
  If additionally $0\in{\rm ri}(\Dscr_1 - \Dscr_2)$, then
  $(\gamma_{\scriptscriptstyle\Dscr_1} + \gamma_{\scriptscriptstyle\Dscr_2})^\circ =
  \gamma_{\scriptscriptstyle\Dscr_1^\circ+\Dscr_2^\circ}$.
\end{lemma}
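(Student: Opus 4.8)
The plan is to prove the two identities in turn: the first falls out of the additivity of support functions, while the second reduces, after taking polars of the first, to showing that the set $\Dscr_1^\circ+\Dscr_2^\circ$ is closed, which is exactly where the relative-interior hypothesis enters. For \eqref{eq:2} I would pass to support functions. Since each $\Dscr_i$ is closed, convex, and contains the origin, its Minkowski gauge is the support function of the polar set, $\gamma_{\scriptscriptstyle\Dscr_i}=\sigma_{\scriptscriptstyle\Dscr_i^\circ}$, where $\sigma_{\scriptscriptstyle\Dscr_i^\circ}(x)=\sup\set{\ip{u}{x}\mid u\in\Dscr_i^\circ}$ (the classical gauge--support duality; see \citet{roc70}). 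Adding the two and using that the support function of a Minkowski sum is the sum of the support functions gives $\gamma_{\scriptscriptstyle\Dscr_1}+\gamma_{\scriptscriptstyle\Dscr_2}=\sigma_{\scriptscriptstyle\Dscr_1^\circ}+\sigma_{\scriptscriptstyle\Dscr_2^\circ}=\sigma_{\scriptscriptstyle\Dscr_1^\circ+\Dscr_2^\circ}$. As $\Dscr_1^\circ+\Dscr_2^\circ$ is convex and contains the origin, its support function is in turn the gauge of its polar, $\sigma_{\scriptscriptstyle\Dscr_1^\circ+\Dscr_2^\circ}=\gamma_{\scriptscriptstyle(\Dscr_1^\circ+\Dscr_2^\circ)^\circ}$, which is \eqref{eq:2}. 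No constraint qualification is needed here.

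For the second identity I would take the polar of \eqref{eq:2}. Writing $K:=(\Dscr_1^\circ+\Dscr_2^\circ)^\circ$, a closed convex set containing the origin, \eqref{eq:2} reads $\gamma_{\scriptscriptstyle\Dscr_1}+\gamma_{\scriptscriptstyle\Dscr_2}=\gamma_{\scriptscriptstyle K}$, and the rule $\polar{(\gamma_{\scriptscriptstyle K})}=\gamma_{\scriptscriptstyle\polar K}$ for such $K$ (see \citet{roc70} and \citet{FriedlanderMacedoPong:2014}) together with the bipolar theorem yields $(\gamma_{\scriptscriptstyle\Dscr_1}+\gamma_{\scriptscriptstyle\Dscr_2})^\circ=\gamma_{\scriptscriptstyle\polar K}=\gamma_{\scriptscriptstyle\cl(\Dscr_1^\circ+\Dscr_2^\circ)}$. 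Thus the claimed equality holds exactly when the closure may be dropped, i.e.\ when $\Dscr_1^\circ+\Dscr_2^\circ$ is already closed; establishing this from $0\in\operatorname{ri}(\Dscr_1-\Dscr_2)$ is the heart of the argument, and I expect it to be the main obstacle.

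I would prove the closedness through conjugacy. The conjugate of a gauge is the indicator of its polar unit ball---this follows from the polar-gauge inequality \eqref{eq:9}---so $\gamma_{\scriptscriptstyle\Dscr_i}^*=\delta_{\scriptscriptstyle\Dscr_i^\circ}$, and the infimal convolution of the two indicators is $\delta_{\scriptscriptstyle\Dscr_1^\circ}\infc\delta_{\scriptscriptstyle\Dscr_2^\circ}=\delta_{\scriptscriptstyle\Dscr_1^\circ+\Dscr_2^\circ}$. By the duality between addition and infimal convolution, $(\gamma_{\scriptscriptstyle\Dscr_1}+\gamma_{\scriptscriptstyle\Dscr_2})^*=\gamma_{\scriptscriptstyle\Dscr_1}^*\infc\gamma_{\scriptscriptstyle\Dscr_2}^*$, and this convolution is exact (no closure) as soon as $\operatorname{ri}(\dom\gamma_{\scriptscriptstyle\Dscr_1})\cap\operatorname{ri}(\dom\gamma_{\scriptscriptstyle\Dscr_2})\neq\emptyset$ (see \citet{roc70}). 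Hence $(\gamma_{\scriptscriptstyle\Dscr_1}+\gamma_{\scriptscriptstyle\Dscr_2})^*=\delta_{\scriptscriptstyle\Dscr_1^\circ+\Dscr_2^\circ}$; being a conjugate, it is closed, so $\Dscr_1^\circ+\Dscr_2^\circ$ is closed, as required.

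To discharge the qualification I would note $\dom\gamma_{\scriptscriptstyle\Dscr_i}=\cone\Dscr_i$ and that $0\in\operatorname{ri}(\Dscr_1-\Dscr_2)$ is equivalent to $\operatorname{ri}\Dscr_1\cap\operatorname{ri}\Dscr_2\neq\emptyset$ (because $\operatorname{ri}(\Dscr_1-\Dscr_2)=\operatorname{ri}\Dscr_1-\operatorname{ri}\Dscr_2$); since $0\in\Dscr_i$ makes $\Dscr_i$ and $\cone\Dscr_i$ share the same affine hull, every point of $\operatorname{ri}\Dscr_i$ lies in $\operatorname{ri}(\cone\Dscr_i)$, so a common relative-interior point of $\Dscr_1$ and $\Dscr_2$ is a common relative-interior point of their generated cones. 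With closedness in hand, the bipolar theorem gives $\polar K=\Dscr_1^\circ+\Dscr_2^\circ$, and the polar of \eqref{eq:2} collapses to the desired $(\gamma_{\scriptscriptstyle\Dscr_1}+\gamma_{\scriptscriptstyle\Dscr_2})^\circ=\gamma_{\scriptscriptstyle\Dscr_1^\circ+\Dscr_2^\circ}$. The remaining manipulations are standard gauge--support bookkeeping; the closedness step is the only genuine difficulty.
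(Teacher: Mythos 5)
Your proposal is correct, and it follows the paper's skeleton for the first identity (gauge--support duality, additivity of support functions, conversion back to a gauge) and for the reduction of the second identity to the closedness of $\Dscr_1^\circ+\Dscr_2^\circ$; where you genuinely diverge is in the mechanism that delivers that closedness. The paper recognizes the set sum as a subdifferential: using the fact that $\Dscr=\partial\delta^*_{\Dscr}(0)$ for a closed convex set \cite[Theorem~13.1]{roc70} and the subdifferential sum rule \cite[Theorem~23.8]{roc70} under the relative-interior qualification, it writes $\Dscr_1^\circ+\Dscr_2^\circ=\partial(\delta^*_{\Dscr_1^\circ}+\delta^*_{\Dscr_2^\circ})(0)$ and concludes closedness because subdifferentials are closed sets. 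You instead pass to conjugates: $(\gamma_{\scriptscriptstyle\Dscr_1}+\gamma_{\scriptscriptstyle\Dscr_2})^*=\gamma_{\scriptscriptstyle\Dscr_1}^*\infc\gamma_{\scriptscriptstyle\Dscr_2}^*=\delta_{\scriptscriptstyle\Dscr_1^\circ}\infc\delta_{\scriptscriptstyle\Dscr_2^\circ}=\delta_{\scriptscriptstyle\Dscr_1^\circ+\Dscr_2^\circ}$, with the first equality holding exactly (no closure operation) by \cite[Theorem~16.4]{roc70} under the same qualification, and conclude closedness because conjugate functions are closed. Both routes hinge on verifying the identical constraint qualification, and your verification is essentially the paper's in different clothing: the paper shows ${\rm ri}\,\Dscr_i\subseteq{\rm ri}\,\dom\delta^*_{\Dscr_i^\circ}$ from $\dom\delta^*_{\Dscr_i^\circ}=\bigcup_{\lambda>0}\lambda\Dscr_i$, while you show ${\rm ri}\,\Dscr_i\subseteq{\rm ri}(\cone\Dscr_i)={\rm ri}\,\dom\gamma_{\scriptscriptstyle\Dscr_i}$ via the affine-hull argument (both are correct, since $\dom\delta^*_{\Dscr_i^\circ}=\dom\gamma_{\scriptscriptstyle\Dscr_i}$). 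What your approach buys is self-containedness: the "exact infimal convolution of indicators" argument rests only on the most standard conjugacy theorem, whereas the paper's rests on the sum rule for subdifferentials, which in Rockafellar is itself derived from Theorem~16.4; what the paper's buys is the slicker one-line identification of a Minkowski sum with a subdifferential. One cosmetic remark: in your first paragraph you apply the support-to-gauge correspondence $\sigma_{\scriptscriptstyle C}=\gamma_{\scriptscriptstyle C^\circ}$ to the possibly non-closed set $C=\Dscr_1^\circ+\Dscr_2^\circ$; this is legitimate because neither the support function nor the polar distinguishes a set from its closure, but it is worth saying so explicitly, as the paper does via the chain $\delta^*_{\scriptscriptstyle\Dscr_1^\circ+\Dscr_2^\circ}=\delta^*_{\scriptscriptstyle\cl(\Dscr_1^\circ+\Dscr_2^\circ)}=\delta^*_{\scriptscriptstyle(\Dscr_1^\circ+\Dscr_2^\circ)^{\circ\circ}}$.
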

\begin{proof}
  Theorem~14.5 of~\cite{roc70} contains most of the needed tools. In
  particular, the gauge of any closed convex set containing the
  origin is the support function of the polar. Thus,
  \[
    \gamma_{\scriptscriptstyle\Dscr_1} + \gamma_{\scriptscriptstyle\Dscr_2}
    = \delta^*_{\scriptscriptstyle\Dscr_1^\circ}
    + \delta^*_{\scriptscriptstyle\Dscr_2^\circ}.
  \]
  Next, observe that
  \[
    \delta^*_{\scriptscriptstyle\Dscr_1^\circ} + \delta^*_{\scriptscriptstyle\Dscr_2^\circ}
    = \delta^*_{\scriptscriptstyle\Dscr_1^\circ+\Dscr_2^\circ}
    = \delta^*_{\scriptscriptstyle\cl(\Dscr_1^\circ+\Dscr_2^\circ)}
    = \delta^*_{\scriptscriptstyle(\Dscr_1^\circ+\Dscr_2^\circ)^{\circ\circ}},
  \]
  where the second equality holds because the support function does
  not distinguish a set from its closure, and the last equality
  follows from the bipolar theorem~\cite[Theorem~14.5]{roc70}. Again
  using the polarity correspondence between gauge and support
  functions,
  $\gamma_{\scriptscriptstyle\Dscr_1} + \gamma_{\scriptscriptstyle\Dscr_2} =
  \gamma_{\scriptscriptstyle(\Dscr_1^\circ+\Dscr_2^\circ)^\circ}$, as required.

  We now prove the second part of the lemma. From~\eqref{eq:2} and
  \citet[Theorem~15.1]{roc70}, we deduce that
  \begin{equation}\label{eq:11}
    (\gamma_{\scriptscriptstyle\Dscr_1}+\gamma_{\scriptscriptstyle\Dscr_2})^\circ
    = \gamma_{\scriptscriptstyle(\Dscr_1^\circ+\Dscr_2^\circ)^{\circ\circ}}.
  \end{equation}
  Next, note that for any closed convex set $\Dscr$ that contains the origin, one has
  \begin{equation}\label{hehehaha1}
    \dom \delta^*_{\Dscr^\circ} = \bigcup_{\lambda > 0}\set{x\mid \delta^*_{\Dscr^\circ}(x)\le \lambda} = \bigcup_{\lambda > 0}\lambda\Dscr^{\circ\circ} = \bigcup_{\lambda > 0}\lambda\Dscr,
  \end{equation}
  where the last equality follows the bipolar
  theorem~\cite[Theorem~14.5]{roc70}. Take the relative interior on
  both sides of \eqref{hehehaha1} and use \citep[p.~50]{roc70} to
  deduce that
  ${\rm ri}\Dscr\subseteq {\rm ri}\dom \delta^*_{\Dscr^\circ}$.  Thus,
  from the assumption $0\in{\rm ri}(\Dscr_1 - \Dscr_2)$, we obtain
  that
  \begin{equation}\label{hehehaha2}
    0 \in {\rm ri}(\Dscr_1 - \Dscr_2) = {\rm ri}\Dscr_1 - {\rm ri}\Dscr_2\subseteq {\rm ri}(\dom \delta^*_{\Dscr_1^\circ}-\dom \delta^*_{\Dscr_2^\circ}),
  \end{equation}
  where the equality follows from \cite[Corollary~6.6.2]{roc70}. Now, the relation \eqref{hehehaha2} together with \cite[Theorem~23.8]{roc70} implies that
  \[
  \Dscr_1^\circ + \Dscr_2^\circ = \partial \delta^*_{\Dscr_1^\circ}(0) + \partial \delta^*_{\Dscr_2^\circ}(0) = \partial(\delta^*_{\Dscr_1^\circ} + \delta^*_{\Dscr_2^\circ})(0),
  \]
  where we used the fact that $\Dscr = \partial \delta^*_{\Dscr}(0)$
  for any closed convex set $\Dscr$ \citep[Theorem~13.1]{roc70}. This
  proves that $\Dscr_1^\circ+\Dscr_2^\circ$ is closed. Thus,
  $(\Dscr_1^\circ+\Dscr_2^\circ)^{\circ\circ}=\Dscr_1^\circ+\Dscr_2^\circ$
  by the bipolar theorem~\cite[Theorem~14.5]{roc70}. It then follows
  from~\eqref{eq:11} that
  $(\gamma_{\Dscr_1}+\gamma_{\Dscr_2})^\circ=\gamma_{\scriptscriptstyle(\Dscr_1^\circ+\Dscr_2^\circ)}$,
  as required.
\end{proof}

When $\Dscr_1$ and $\Dscr_2$ are closed convex sets containing the
origin with $0\in \interior{\Dscr_1^\circ}$---which implies that
$\Dscr_1$ is bounded---\cref{lem:sum-gauges} allows us to express
the polar convolution of two gauge functions
$\gamma_{\scriptscriptstyle\Dscr_1}$ and
$\gamma_{\scriptscriptstyle\Dscr_2}$ as
\[
  \cl{(\gamma_{\scriptscriptstyle\Dscr_1} \pconv \gamma_{\scriptscriptstyle\Dscr_2})}
  = (\gamma_{\scriptscriptstyle\Dscr_1^\circ}+\gamma_{\scriptscriptstyle\Dscr_2^\circ})^\circ
  = \gamma_{\scriptscriptstyle\Dscr_1 + \Dscr_2},
\]
where the first equality follows from \cref{lem1} and
\citet[Propositions~2.1(ii) and~2.3(i)]{FriedlanderMacedoPong:2014},
and the second equality follows from \cref{lem:sum-gauges} and
the bipolar theorem \cite[Theorem~14.5]{roc70}. Thus, we observe that
the polar convolution of the Minkowski functions of two sets results
in the Minkowski function of the sum of the sets. This result
confirms the level-set addition property described by the
identity~\eqref{eq:6}.

\section{Polar envelope} \label{sec:polar-envelope}

We now define a special case of polar convolution in which one of the
functions involved is a multiple of the 2-norm. This operation is
analogous to the \textit{Moreau envelope} of a general convex function
$f$; cf.~\eqref{eq:7}.  \citet[Example~3.1]{SeegerVolle95} briefly
describe the max convolution of a general convex function with
the (unsquared) 2-norm. When restricted to gauge functions, the max
convolution operation has a number of useful properties that can be
characterized explicitly. These include, for example, its differential
properties. Here is the formal definition of the polar envelope of a
gauge function.
\begin{definition}[Polar envelope]
  For a gauge $\kappa$ and positive scalar $\alpha$, the function
  \begin{equation} \label{eq:10}
    \kappa_\alpha(x)
    := \left(\kappa\pconv(1/\alpha)\|\cdot\|\right)(x)
    =\inf_z\max\left\{ \kappa(z),\, (1/\alpha)\|x-z\| \right\}
  \end{equation}
  is the \emph{polar envelope} of $\kappa$.  The corresponding
  \emph{polar proximal map}
  \begin{equation*}
    \pprox_{\alpha \kappa}(x) := \argmin_z\,\maxv{\kappa(z),\,(1/\alpha)\|x-z\|}
  \end{equation*}
  is the minimizing set that defines $\kappa_\alpha$.
\end{definition}

\cref{fig:envelopes} shows the Moreau and polar envelopes of the infinity norm.

\begin{figure}[t]
  \centering
  \includegraphics[width=\textwidth,trim=0 70 0 70,clip]{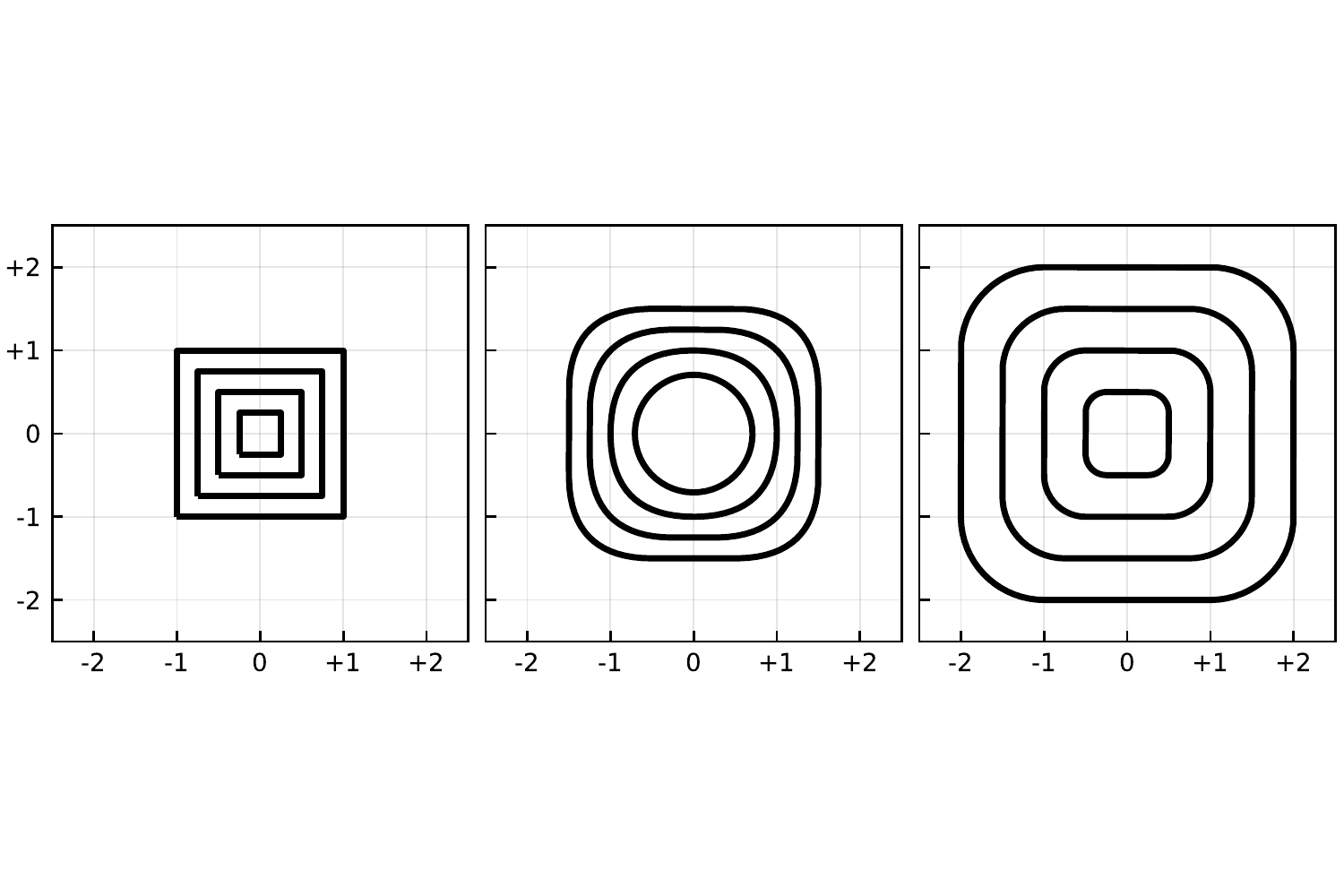}
  \caption{The $\{.25,.50,.75,1.0\}$ iso-contours of the infinity norm
    (left), its Moreau envelope (middle), and its polar envelope
    (right). The iso-contours of the original norm and its polar
    envelope are evenly spaced because both functions are positively
    homogeneous, unlike the Moreau envelope. }
  \label{fig:envelopes}
\end{figure}

\begin{example}[Indicator to a cone]
  \label{ex:indicator-to-cone}
  Let $\Kscr$ be a closed convex cone. Then $\kappa := \delta\Ks$ is a
  closed gauge. Moreover, for any $\alpha > 0$, we have
  \[
    \kappa_\alpha(x)
    = \inf_z \maxv{\delta_\Kscr(z),\,(1/\alpha)\|x-z\|}
    = \inf_{z\in\Kscr}(1/\alpha)\|x-z\|
    = (1/\alpha)\dist\Ks(x).
  \]
  For comparison, the Moreau envelope of $\delta\Ks$ is related to the
  squared distance to the cone:
  \[
    \big( \delta\Ks \sumc (1/2\alpha) \|\cdot\|^2\big)(x)
    = (1/2\alpha)\dist\Ks^2(x).
  \]
  The solutions of both the max and sum convolutions are the same
  in this case, and correspond to the projection of $x$ onto the cone:
  \[
    \pprox_{\alpha\kappa}(x) = \proj\Ks(x) = \prox_{\alpha\kappa}(x).
  \]
  \QEDA
\end{example}

We collect some known properties of the polar envelope in the next
proposition, which specializes results established by
Proposition~3.2 and Corollary~4.1 of \citet{SeegerVolle95}.
\begin{proposition}[Seeger and Volle {[1995]}]\label{known}
  Let $\kappa$ and $\rho$ be gauge functions. The following properties
  hold.
  \begin{enumerate}[{\rm (i)}]
  \item for any $\alpha > 0$, $\kappa_\alpha$ is Lipschitz continuous
    with modulus $1/\alpha$;
  \item $\cl \kappa(x) = \sup\set{\kappa_\alpha(x)|\alpha > 0}$ for
    all $x$;
  \item if $\kappa$ and $\rho$ are both closed, then $\kappa = \rho$
    if and only if there exists $\alpha > 0$ so that
    $\kappa_\alpha = \rho_\alpha$.
  \end{enumerate}
\end{proposition}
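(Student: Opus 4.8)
The plan is to treat the three parts separately: part (i) by a direct triangle-inequality estimate on the infimand, part (ii) by a monotone-limit argument pinned between a closedness bound and a near-minimizer bound, and part (iii) by invoking the polar convolution identity of \cref{lem1}. Throughout I would first note that, since $(1/\alpha)\|\cdot\|$ has full domain, \cref{cor:bounded} gives $\dom\kappa_\alpha=\Xscr$, so each $\kappa_\alpha$ is finite-valued and the claims about continuity and suprema are meaningful.

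For part (i), I would fix $x,x'\in\Xscr$ and estimate the expression inside the infimum defining $\kappa_\alpha$. For each $z$, the triangle inequality gives $(1/\alpha)\|x'-z\|\le(1/\alpha)\|x-z\|+(1/\alpha)\|x-x'\|$, and the elementary fact that $\max\{a,b\}\le\max\{a,c\}+d$ whenever $b\le c+d$ and $d\ge0$ then yields
\[
  \max\set{\kappa(z),\,(1/\alpha)\|x'-z\|}\le\max\set{\kappa(z),\,(1/\alpha)\|x-z\|}+(1/\alpha)\|x-x'\|.
\]
Taking the infimum over $z$ on both sides gives $\kappa_\alpha(x')\le\kappa_\alpha(x)+(1/\alpha)\|x-x'\|$, and symmetry in $x,x'$ furnishes the Lipschitz bound with modulus $1/\alpha$.

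For part (ii), I would first record monotonicity: since $\alpha\mapsto(1/\alpha)\|x-z\|$ is nonincreasing, $\kappa_\alpha(x)$ is nonincreasing in $\alpha$, so $\sup_{\alpha>0}\kappa_\alpha(x)=\lim_{\alpha\to0^+}\kappa_\alpha(x)=:L$. The upper bound $L\le\cl\kappa(x)$ follows because each $\kappa_\alpha$ is continuous by part (i), hence closed, and satisfies $\kappa_\alpha\le\kappa$ (take $z=x$ in the infimum); as $\cl\kappa$ is the largest closed function majorized by $\kappa$, we get $\kappa_\alpha\le\cl\kappa$. For the reverse inequality, if $L<\infty$ I would pick $\alpha_k\to0^+$ and near-minimizers $z_k$ with $\max\set{\kappa(z_k),\,(1/\alpha_k)\|x-z_k\|}\le\kappa_{\alpha_k}(x)+\alpha_k\le L+\alpha_k$; the distance term forces $\|x-z_k\|\le\alpha_k(L+\alpha_k)\to0$, so $z_k\to x$, while $\kappa(z_k)\le L+\alpha_k$. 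Lower semicontinuity of $\cl\kappa$ at $x$ together with $\cl\kappa\le\kappa$ then gives $\cl\kappa(x)\le\liminf_k\kappa(z_k)\le L$ (the case $L=\infty$ being trivial), which closes the argument.

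For part (iii), the forward implication is immediate, so the content is the converse. I would apply the first identity of \cref{lem1} to $\kappa_\alpha=\kappa\pconv(1/\alpha)\|\cdot\|$ together with the computation $\big((1/\alpha)\|\cdot\|\big)^\circ=\alpha\|\cdot\|$ to obtain $\kappa_\alpha^\circ=\kappa^\circ+\alpha\|\cdot\|$, and likewise $\rho_\alpha^\circ=\rho^\circ+\alpha\|\cdot\|$. If $\kappa_\alpha=\rho_\alpha$ for some $\alpha>0$, equating polars and cancelling the finite-valued term $\alpha\|\cdot\|$ gives $\kappa^\circ=\rho^\circ$; taking polars once more and using that closed gauges satisfy $\kappa^{\circ\circ}=\kappa$ yields $\kappa=\rho$. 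I expect the main obstacle to be the lower-bound half of part (ii): one must verify that the near-minimizers $z_k$ genuinely converge to $x$ and then invoke lower semicontinuity of $\cl\kappa$ in the correct direction, taking care that $\kappa$ (and hence $\cl\kappa$) may be extended-valued so the estimate remains valid when $\cl\kappa(x)=+\infty$.
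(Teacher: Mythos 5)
Your proof is correct in all three parts, but it takes a genuinely different route from the paper: the paper offers no proof at all, simply citing Proposition~3.2 and Corollary~4.1 of Seeger and Volle [1995], of which \cref{known} is a specialization. Your argument is self-contained and elementary where the paper defers to the literature. Parts (i) and (ii) are handled by direct estimates (the triangle-inequality bound on the infimand, and the near-minimizer/lower-semicontinuity squeeze, both of which are sound --- including the correct handling of the case $\cl\kappa(x)=+\infty$ via the already-established inequality $L\le\cl\kappa(x)$). Part (iii) is the most interesting divergence: Seeger and Volle derive their uniqueness result for general proper closed convex functions from their conjugate formula for max convolution (the analogue of \eqref{conjugate_maxc}), whereas you exploit the gauge structure directly, applying \cref{lem1} to get $\kappa_\alpha^\circ=\kappa^\circ+\alpha\|\cdot\|$, cancelling the finite term $\alpha\|\cdot\|$ (correctly noting that infinities propagate consistently), and closing with the bipolar identity $\kappa^{\circ\circ}=\kappa$ for closed gauges. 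What each approach buys: the citation route inherits the greater generality of Seeger--Volle's results (valid beyond gauges), while your route makes the proposition self-contained within the paper's own machinery and shows that, for gauges, the uniqueness statement is an almost immediate corollary of the polar convolution identity --- a connection the paper's presentation leaves implicit. One stylistic remark: your preliminary observation that $\dom\kappa_\alpha=\Xscr$ via \cref{cor:bounded}, together with $\kappa_\alpha\ge 0$, is exactly what makes the cancellation in (iii) and the finiteness assumptions in (i)--(ii) legitimate, so it is worth keeping in any final write-up.
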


The following results establish the differential properties of the
polar envelope and the corresponding polar proximal map.

\begin{theorem}[Differential properties]
  \label{th:polar-envelope-subdiff}
  For any gauge $\kappa$ and positive scalar $\alpha$, the following
  properties hold.
 \begin{enumerate}[{\rm (i)}]
 \item The subdifferential of the polar envelope $\kappa_\alpha$ is given by
   \[
     \partial \kappa_\alpha(x) =
     \argmax_y\left\{ \ip{x}{y}\mid \kappa^\circ(y) + \alpha\|y\|\le 1\right\}.
   \]
   Moreover, $\kappa_\alpha(x) = \ip{x}{y}$ for any $y\in \partial \kappa_\alpha(x)$.
 \item If $\xbar\in\pprox_{\alpha\kappa}(x)$, then
   $\|x - \xbar\| \ge \alpha\kappa(\xbar)$. If in addition $\kappa$
   is continuous, then $\|x - \xbar\| = \alpha\kappa(\xbar)$.
 \item If $\kappa$ is closed, then $\pprox_{\alpha\kappa}(x)$ is a
   singleton for all $x$, and $\pprox_{\alpha\kappa}$ is a
   continuous and positively homogeneous map.
 \item Suppose that $\kappa$ is closed. Then $\kappa_\alpha$ is
   differentiable at all $x$ such that $\kappa_\alpha(x)>0$. Moreover,
   at these $x$, it holds that $\ip{x}{x-\xbar}>0$ and
   \[
   \nabla \kappa_\alpha(x) = \frac{\|x-\xbar\|}{\alpha \ip{x}{x-\xbar}}(x-\xbar),
   \]
   where $\xbar = \pprox_{\alpha\kappa}(x)$.
 \end{enumerate}
\end{theorem}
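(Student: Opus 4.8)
The plan is to establish the four parts in order, as each relies on its predecessors. For (i) I would first combine the polar convolution identity of \cref{lem1} with the fact that the polar of $(1/\alpha)\|\cdot\|$ is $\alpha\|\cdot\|$ (the $2$-norm is self-dual and polarity inverts positive scalings) to obtain $(\kappa_\alpha)^\circ=\kappa^\circ+\alpha\|\cdot\|$. Because $\kappa_\alpha$ is a continuous gauge by \cref{cor:bounded} and \cref{known}(i), the bipolar correspondence for closed gauges \citep[Proposition~2.1(ii)]{FriedlanderMacedoPong:2014} yields $\kappa_\alpha(x)=\sup_y\set{\ip xy\mid\kappa^\circ(y)+\alpha\|y\|\le1}$. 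The feasible set $C:=\set{y\mid\kappa^\circ(y)+\alpha\|y\|\le1}$ is closed, convex, and bounded (since $\alpha\|y\|\le1$ on it), so $\kappa_\alpha$ is the support function of the compact convex set $C$, and the standard description of the subdifferential of a support function \citep[Theorem~23.5]{roc70} both identifies $\partial\kappa_\alpha(x)$ with the claimed argmax and shows $\ip xy=\kappa_\alpha(x)$ for every maximizer $y$.

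Part (ii) is a homogeneity/perturbation argument. Given $\xbar\in\pprox_{\alpha\kappa}(x)$, I would suppose $(1/\alpha)\|x-\xbar\|<\kappa(\xbar)$ and scale toward the origin along $z_s=s\xbar$ with $s\uparrow1$: this strictly lowers $\kappa(z_s)=s\kappa(\xbar)$ while, by continuity of the distance term, keeping $(1/\alpha)\|x-z_s\|$ below $\kappa(\xbar)$, driving the objective below its minimum---a contradiction giving $\|x-\xbar\|\ge\alpha\kappa(\xbar)$ (no continuity of $\kappa$ needed). When $\kappa$ is continuous, assuming strict inequality and moving along $z_t=\xbar+t(x-\xbar)$, $t\downarrow0$, strictly lowers the (dominant) distance term while continuity keeps the $\kappa$-term below it, again contradicting minimality; hence equality. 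For (iii), existence holds because closedness of $\kappa$ makes $h(z):=\max\set{\kappa(z),(1/\alpha)\|x-z\|}$ lower semicontinuous and coercive. The engine of uniqueness is the observation, from (ii), that at every minimizer the distance term dominates, so $h(\xbar)=(1/\alpha)\|x-\xbar\|=\kappa_\alpha(x)=:v$ and every minimizer sits at distance exactly $\alpha v$ from $x$; if $v=0$ then $\xbar=x$, while if $v>0$ the midpoint of two minimizers is again a minimizer at distance $\alpha v$, forcing equality in the triangle inequality for the strictly convex $2$-norm and hence $x-\xbar_1=x-\xbar_2$. Positive homogeneity follows because replacing $(x,z)$ by $(tx,tz)$ scales the objective by $t>0$, and continuity from a routine argmin-stability argument using the Lipschitz bound of \cref{known}(i) to bound $\set{\pprox_{\alpha\kappa}(x_k)}$, joint lower semicontinuity of $h$, and uniqueness.

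Part (iv) is where the real work lies. With $\xbar=\pprox_{\alpha\kappa}(x)$ and $v=\kappa_\alpha(x)>0$, part (ii) forces the distance term to dominate, so $\kappa_\alpha(x)=\max\set{\kappa(\xbar),(1/\alpha)\|x-\xbar\|}$ with $\|x-\xbar\|=\alpha v>0$, and \cref{Prop_sub} applies at $x$ with the decomposition $x=\xbar+(x-\xbar)$, $f_1=\kappa$, $f_2=(1/\alpha)\|\cdot\|$. Since $x-\xbar\ne0$, the $2$-norm is differentiable there, so in every term of the \cref{Prop_sub} union the $f_2$-factor is a single nonnegative multiple of $x-\xbar$ (the $\mu=1$ term contributing $0$); consequently every element of $\partial\kappa_\alpha(x)$ has the form $s(x-\xbar)$ with $s\ge0$. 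The decisive step is then the ``moreover'' clause of part (i): every $y\in\partial\kappa_\alpha(x)$ satisfies $\ip xy=v>0$, so $s\ip{x}{x-\xbar}=v>0$, which simultaneously forces $\ip{x}{x-\xbar}>0$ and pins $s$ to the single value $v/\ip{x}{x-\xbar}$. Hence $\partial\kappa_\alpha(x)$ is a singleton---so $\kappa_\alpha$ is differentiable at $x$---and substituting $v=(1/\alpha)\|x-\xbar\|$ produces the stated gradient. I expect this collapse to be the main obstacle: a priori \cref{Prop_sub} only confines $\partial\kappa_\alpha(x)$ to a segment of the ray through $x-\xbar$, and it is precisely the scalar identity $\ip xy=\kappa_\alpha(x)$ from (i) that reduces that segment to a point and thereby delivers differentiability.
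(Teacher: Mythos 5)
Your proposal is correct and follows essentially the same route as the paper's own proof: part (i) via the polar convolution identity of \cref{lem1}, bipolarity for continuous gauges, and the support-function subdifferential of a compact convex set; part (ii) via the same two scaling perturbations (toward the origin, then along $x-\xbar$); part (iii) via coercivity, the fact that all minimizers lie at equal distance from $x$ so collinearity forces uniqueness, positive homogeneity by change of variables, and argmin stability for continuity; and part (iv) by using \cref{Prop_sub} to confine $\partial\kappa_\alpha(x)$ to nonnegative multiples of $x-\xbar$ and then the scalar identity $\ip{x}{y}=\kappa_\alpha(x)>0$ from part (i) to force $\ip{x}{x-\xbar}>0$ and collapse the subdifferential to a single point. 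No gaps; the argument matches the paper's in structure and in all essential steps.
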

\begin{proof}
  Part (i). Because $\kappa_\alpha$ is continuous, we have from
  \citet[Proposition 2.1{(ii--iii)}]{FriedlanderMacedoPong:2014} that
 \begin{equation}\label{hahaha}
 \begin{aligned}
   \kappa_\alpha(x)
   &= (\kappa_\alpha)^{\circ\circ}(x)
   = \sup_x\set{\ip{x}{y}\mid (\kappa_\alpha)^\circ(y)\le 1}\\
   &= \sup_x\set{\ip{x}{y}\mid \kappa^\circ(y) + \alpha\|y\|\le 1},
 \end{aligned}
 \end{equation}
 where the last equality follows from \cref{lem1}. Since the
 set $\set{y\mid \kappa^\circ(y) + \alpha\|y\|\le 1}$ is compact, the
 subdifferential formula follows immediately from
 \citet[Theorem~4.4.2, p.189]{hiriart-urruty01}. The claim concerning
 $\kappa_\alpha(x)$ now follows from this and~\eqref{hahaha}.

 Part (ii). Suppose to the contrary that
 $\|x - \xbar\| < \alpha\kappa(\xbar)$. Then
 $\kappa(\xbar) = \kappa_\alpha(x)$ and there exists $t\in (0,1)$ so
 that
  \[
  (1/\alpha)\|x - t\xbar\| < \kappa(t\xbar) < \kappa(\xbar) = \kappa_\alpha(x).
  \]
  Hence,
  $\kappa_\alpha(x) > \max\set{\kappa(t\xbar),\,(1/\alpha)\|x - t\xbar\|} \ge \kappa_\alpha(x)$, which leads to a contradiction. Now
  suppose in addition that $\kappa$ is continuous and suppose to the
  contrary that $\|x - \xbar\| > \alpha\kappa(\xbar)$. Then
  $(1/\alpha)\|x - \xbar\| = \kappa_\alpha(x)$ and there exists
  $t\in (0,1)$ so that
 \[
   \kappa_\alpha(x)
   = \alpha^{-1} \|x - \xbar\|
   > \alpha^{-1}(1-t)\|x - \xbar\|
   = \alpha^{-1}\|x - (\xbar + t[x - \xbar])\|
   > \kappa(\xbar + t(x - \xbar)).
  \]
  Hence,
  $\kappa_\alpha(x) > \maxv{\kappa(\xbar + t[x - \xbar]),\, (1/\alpha) \|x - (\xbar + t[x - \xbar])\|} \ge
  \kappa_\alpha(x)$, which leads to a contradiction.  This proves (ii).

  Part~(iii). Since $\kappa$ is closed, the function
  $z\mapsto \max\set{\kappa(z),\,(1/\alpha)\|x - z\|}$ is closed and
  coercive for all $x$. Thus, $\pprox_{\alpha \kappa}(x)$ is nonempty
  for all $x$. Now, suppose that $\xbar$ and $\xhat$ belong to
  $\pprox_{\alpha\kappa}(x)$. Since the function
  $z\mapsto \max\set{\kappa(z),\,(1/\alpha)\|x - z\|}$ is convex, it
  follows that
  $\set{\lambda \xhat + (1-\lambda)\xbar\mid \lambda\in
    [0,1]}\subseteq \pprox_{\alpha \kappa}(x)$. Then from Part~(ii),
 \begin{equation}\label{eq3}
   \alpha\kappa_\alpha(x)
   = \|\xhat - x\|
   = \|\xbar - x\|
   = \|\lambda \xhat + (1-\lambda)\xbar-x\| \quad \forall \lambda\in (0,1).
 \end{equation}
 Thus,
 \[
   \alpha\kappa_\alpha(x)
   = \|\lambda \xhat + (1-\lambda)\xbar-x\|
   \le \lambda \|\xhat - x\| + (1-\lambda)\|\xbar - x\|
   = \alpha\kappa_\alpha(x).
 \]
 Hence, equality holds throughout. This implies that $\xbar - x$ and
 $\xhat - x$ differ by a nonnegative scaling, and thus we may assume
 without loss of generality that $\xbar - x = \tau(\xhat - x)$ for
 some $\tau \ge 0$.  Now, if $\kappa_\alpha(x) > 0$, we see
 from~\eqref{eq3} that $\tau = 1$, which implies $\xbar = \xhat$. On
 the other hand, if $\kappa_\alpha(x) = 0$, then~\eqref{eq3} gives
 $\xbar = x = \xhat$. Thus, $\pprox_{\alpha \kappa}(x)$ is a
 singleton.

 Let $\gamma > 0$. Then for any $x$,
 \begin{align*}
   \pprox_{\alpha\kappa}(\gamma x)
   &= \argmin_u \maxv{\kappa(u),\,(1/\alpha)\|u - \gamma x\|} \\
   &= \gamma \argmin_v \maxv{\kappa(\gamma v),\,(1/\alpha)\|\gamma v - \gamma x\|} \quad (u = \gamma v)\\
   &= \gamma \argmin_v \maxv{\kappa(v),\,(1/\alpha)\|v - x\|}= \gamma \pprox_{\alpha\kappa}(x),
 \end{align*}
 where the second last equality holds because $\kappa$ is positively
 homogeneous. Moreover, it is clear that
 $\pprox_{\alpha\kappa}(0)=0$. Thus, $\pprox_{\alpha\kappa}$ is
 positively homogeneous.

 We next prove continuity. Let $x_k\to x$ and write
 $\xbar_k = \pprox_{\alpha\kappa}(x_k)$ for notational
 simplicity. Then we have from the definition of $\xbar_k$ that for
 any $u$,
 \begin{equation}\label{optineq}
   \maxv{\kappa(\xbar_k),\,(1/\alpha)\|x_k - \xbar_k\|}
   \le \maxv{\kappa(u),\,(1/\alpha)\|x_k - u\|}.
 \end{equation}
 By setting $u=0$ in~\eqref{optineq}, we immediately conclude that
 $\{\xbar_k\}$ is bounded. Take any convergent subsequence
 $\{\xbar_{k_i}\}$ of $\{\xbar_k\}$ and let $\xhat$ denote its
 limit. Passing to the limit in~\eqref{optineq} along the subsequences
 $\{x_{k_i}\}$ and $\{\xbar_{k_i}\}$ and invoking the closedness of
 $\kappa$ gives
 \[
   \maxv{\kappa(\xhat),\,(1/\alpha)\|x - \xhat\|}
   \le \maxv{\kappa(u),\,(1/\alpha)\|x - u\|}
 \]
 for any $u$.  Thus, $\xhat = \pprox_{\alpha \kappa}(x)$. Since any
 convergent subsequence of the bounded sequence
 $\set{\pprox_{\alpha\kappa}(x_k)}$ converges to
 $\pprox_{\alpha \kappa}(x)$, it follows that
 $\pprox_{\alpha\kappa}(x_k)\to \pprox_{\alpha\kappa}(x)$. This proves
 the continuity of $\pprox_{\alpha\kappa}$.

 Part~(iv). Consider any $x$ satisfying $\kappa_\alpha(x) > 0$. Recall
 from Part~(ii) that $\|x - \xbar\| \ge \alpha\kappa(\xbar)$, where
 $\xbar = \pprox_{\alpha\kappa}(x)$ exists due to Part~(iii) and $\kappa$ being closed. In particular, this implies
  \[
  (1/\alpha)\|x - \xbar\| = \kappa_\alpha(x) > 0.
  \]
  Combining this with \cref{Prop_sub}, we see further that
  \begin{equation*}
  \begin{split}
    \partial \kappa_\alpha(x)
    &= \bigcup_{\mathclap{\mu\in [0,1]}}
    \:\left\{\partial(\lscale{\mu} \kappa)(\xbar) \medcap \frac{1-\mu}\alpha\left\{\frac{x-\xbar}{\|x - \xbar\|}\right\}
      \ \middle|\ \lscale{\mu}\kappa(\xbar) + \frac{1-\mu}{\alpha}\|x - \xbar\| = \kappa_\alpha(x)\right\}\\
    &\subseteq
    \left\{\lambda\cdot\frac{x-\xbar}{\|x - \xbar\|} \ \middle| \
      \lambda \in [0,1/\alpha]\right\}.
  \end{split}
  \end{equation*}
  Thus, for any elements $u$ and $v$ in $\partial \kappa_\alpha(x)$,
  there exist scalars $\lambda_1,\lambda_2\ge 0$ so that
  $u = \lambda_1(x-\xbar)$ and $v = \lambda_2(x-\xbar)$. In view of
  Part~(i) of this theorem,
  \[
  0<\kappa_\alpha(x) = \ip{x}{\lambda_1(x-\xbar)} = \ip{x}{\lambda_2(x-\xbar)},
  \]
  which establishes that $\ip{x}{x-\xbar} > 0$ and
  $\lambda_1=\lambda_2$. Hence, $\partial \kappa_\alpha(x)$ is a
  singleton whenever $\kappa_\alpha(x) >0$, which implies that
  $\kappa_\alpha$ is differentiable at those $x$.

  To obtain the formula for $\nabla \kappa_\alpha(x)$, note that
  $\nabla \kappa_\alpha(x) = \lambda(x-\xbar)$ for some
  $\lambda \ge 0$. In view of Part~(i), we must have
  $\ip{x}{\lambda(x-\xbar)} = \kappa_\alpha(x) > 0$, and thus
  \[
    \lambda = \frac{\kappa_\alpha(x)}{\ip{x}{x-\xbar}} = \frac{\|x - \xbar\|}{\alpha\ip{x}{x-\xbar}},
  \]
  where the second equality is due to Part~(ii).  This proves the formula
  of $\nabla \kappa_\alpha(x)$.
\end{proof}

\section{Computing polar envelopes: examples}\label{sec:computing_polar_env}

Let $\kappa$ be a closed gauge. We illustrate how to use
\cref{th:polar-envelope-subdiff} to compute
$\pprox_{\alpha \kappa}(x)$---and hence $\kappa_\alpha(x)$---at those
$x$ with $\kappa_\alpha(x) > 0$, which are points of
differentiability.

Recall from \cref{th:polar-envelope-subdiff}(iii) that the set
$\pprox_{\alpha\kappa}(x)$ is a singleton.  For any $x$ that satisfies
$\kappa_\alpha(x) > 0$, let $\ybar:=\pprox_{\alpha\kappa}(x)$ and
write $\rbar := \kappa_\alpha(x)> 0$. Then
$\kappa(\ybar) \le (1/\alpha)\|\ybar - x\| = \rbar$ according to
\cref{th:polar-envelope-subdiff}(ii). In particular, we have
$\ybar \neq x$.

We consider two cases. First, suppose that
$\kappa(\ybar) < (1/\alpha)\|\ybar - x\|$. It follows from
\citet[Corollary~2.8.15]{Za02} that the optimality conditions
for~\eqref{eq:10} are given by
\[
  0\in \frac1\alpha\cdot\frac{\ybar - x}{\|\ybar - x\|}
  + \normal(\ybar\mid \dom\kappa).
\]
This implies $\ybar = \proj_{\dom\kappa}(x)$, and in particular, that
the projection exists in this case.

On the other hand, suppose that
\begin{equation}\label{plug_in0}
  \kappa(\ybar) = (1/\alpha)\|\ybar - x\| = \rbar.
\end{equation}
Again applying \citet[Corollary~2.8.15]{Za02} to obtain the optimality
condition for~\eqref{eq:10}, we deduce that there exists
$\lambda \in [0,1]$ with the property that
\begin{equation}\label{eq:hatimes6}
  0\in \frac{1-\lambda}\alpha\cdot\frac{\ybar - x}{\|\ybar - x\|} + \partial(\lscale{\lambda}\kappa)(\ybar).
\end{equation}
We claim that $\lambda \neq 1$. Suppose to the contrary that
$\lambda = 1$. Then~\eqref{eq:hatimes6} becomes
$0 \in \partial\kappa(\ybar)$. This means that $\ybar$ minimizes the
gauge function $\kappa$, giving $\kappa(\ybar) = 0$. This
contradicts~\eqref{plug_in0} because $\bar r > 0$.

  Thus, $\lambda \neq 1$ and we obtain from~\eqref{eq:hatimes6} that
  \[
  0 \in \ybar - x + \frac{\alpha}{1-\lambda}\|\ybar - x\| \partial(\lscale{\lambda}\kappa)(\ybar) \subset \ybar - x + \normal(\ybar \mid [\kappa\le \rbar]),
  \]
  where the second inclusion follows from
  \citet[Corollary~2.9.5]{Za02}, \eqref{plug_in0}, and the fact that
  $\rbar > 0$. This implies $\ybar = \proj_{[\kappa\le
    \rbar]}(x)$. Substituting this relation into~\eqref{plug_in0}
  shows that $\rbar$ satisfies the equation
  \begin{equation}\label{hahahahahaha}
    \alpha^2 \rbar^2
    = \| x - \proj_{[\kappa\le\rbar]}(x)\|^2.
  \end{equation}
  Next, note that the function $r\mapsto \|x - \proj_{[\kappa\le r]}(x)\|^2$ is nonincreasing on $(0,\infty)$: indeed, for $s\ge r > 0$, we have
  \[
  \begin{aligned}
  & \|x - \proj_{[\kappa\le r]}(x)\|^2  = \|x - \proj_{[\kappa\le s]}(x) + \proj_{[\kappa\le s]}(x) -\proj_{[\kappa\le r]}(x)\|^2\\
  & = \|x - \proj_{[\kappa\le s]}(x)\|^2 + 2 \ip{x - \proj_{[\kappa\le s]}(x)}{\,\proj_{[\kappa\le s]}(x) -\proj_{[\kappa\le r]}(x)}\\
  &\quad\ + \|\proj_{[\kappa\le s]}(x) -\proj_{[\kappa\le r]}(x)\|^2\\
  & \overset{\rm(a)}\ge \|x - \proj_{[\kappa\le s]}(x)\|^2 + \|\proj_{[\kappa\le s]}(x) -\proj_{[\kappa\le r]}(x)\|^2\ge \|x - \proj_{[\kappa\le s]}(x)\|^2 ,
  \end{aligned}
  \]
  where the inequality (a) follows from the property of projections
  and the fact that $\proj_{[\kappa\le r]}(x)\in [\kappa\le s]$.
  Consequently, the function
\[
r\mapsto \alpha^2r^2 - \|x - \proj_{[\kappa\le r]}(x)\|^2
\]
is \emph{strictly} increasing on $(0,\infty)$. Thus, $\bar r$ is the \emph{unique positive} root satisfying~\eqref{hahahahahaha}.

In summary,
\begin{align*}
   \pprox_{\alpha\kappa}(x) &=
  \begin{cases}
    \proj_{\dom\kappa}(x) & \mbox{if $\ybar:=\proj_{\dom\kappa}(x)$ exists and $\kappa(\ybar) < (1/\alpha) \|x - \ybar\|$,}
    \\\proj_{[\kappa\le \rbar]}(x) & \mbox{otherwise,}
  \end{cases}
  \\[4pt]
  \kappa_\alpha(x) &=
  \begin{cases}
    (1/\alpha)\|x-\ybar\| & \mbox{if $\ybar:=\proj_{\dom\kappa}(x)$ exists and $\kappa(\ybar) < (1/\alpha) \|x - \ybar\|$,}
    \\\rbar & \mbox{otherwise,}
  \end{cases}
\end{align*}
where $\rbar$ is the unique positive root of equation~\eqref{hahahahahaha}.

We now give examples that show how these formulas specialize to
common cases.

\begin{example}[Linear function over a cone]\label{ex_lin_ind}
  Let $\kappa(z) = \ip{c}{z} + \delta\Ks(z)$ for some closed convex
  cone $\Kscr$ and some vector $c$ in the dual cone $\Kscr^*$. Then
  $\kappa$ is a closed gauge, $\dom\kappa = \Kscr$, and $\proj\Ks(x)$
  exists for all $x$ because $\Kscr$ is closed. For any $x$ satisfying
  $\kappa_\alpha(x) > 0$, the polar proximal map is given by
  \begin{align*}
    \pprox_{\alpha\kappa}(x) &=
    \begin{cases}
      \proj\Ks(x) & \mbox{if $\ip{c}{\proj\Ks(x)} < (1/\alpha) \|\proj\Ksp(x)\|$,}
    \\\proj_{\scriptscriptstyle\widebar\Kscr(\rbar)}(x) & \mbox{otherwise,}
  \end{cases}
    \\
    \kappa_\alpha(x) &=
    \begin{cases}
      (1/\alpha)\|\proj\Ksp(x)\| & \mbox{if $\ip{c}{\proj\Ks(x)} < (1/\alpha) \|\proj\Ksp(x)\|$,}
    \\\rbar & \mbox{otherwise,}
  \end{cases}
  \end{align*}
  where $\widebar\Kscr(\rbar):=\Kscr\cap\set{u|\ip{c}{u} \le \rbar}$
  and $\rbar$ is the unique positive root of the equation
\begin{equation*}
    \alpha^2 \rbar^2
    = \| x - \proj_{\scriptscriptstyle\widebar\Kscr(\rbar)}(x)\|^2.
  \end{equation*}
  Here we used the Moreau identity to determine that
  $x=\proj\Ks(x)+\proj\Ksp(x)$. \QEDA
\end{example}

\begin{example}[Continuous gauge]\label{example_cont_gauge}
  Let $\kappa$ be a continuous gauge. Then we have from
  \cref{th:polar-envelope-subdiff}(ii) that
  $\kappa(\pprox_{\alpha\kappa}(x)) =
  (1/\alpha)\|\pprox_{\alpha\kappa}(x) - x\|$. Then, for any $x$
  satisfying $\kappa_\alpha(x) > 0$, it holds that
  \[
    \kappa_\alpha(x) = \rbar
    \quad\text{and}\quad
    \pprox_{\alpha\kappa}(x) = \proj_{[\kappa\le\rbar]}(x),
  \]
  where $\rbar$ is the unique positive root of the equation
\begin{equation}\label{example_eq_1}
  \alpha^2\rbar^2
  = \|x - \proj_{[\kappa\le\rbar]}(x)\|^2.
\end{equation}
\QEDA
\end{example}

\begin{example}[Infinity norm]
  As a concrete example, consider $\kappa = \|\cdot\|_\infty$. Note
  that $\kappa_\alpha(x) = 0$ if and only if $x = 0$. For $x\neq 0$,
  equation~\eqref{example_eq_1} becomes
  \[
  \alpha^2 \rbar^2 = \sum_{i=1}^n (|x_i| - \rbar)_+^2.
  \]
  Letting $\bar \gamma = \rbar^{-1}$, the above equation is equivalent
  to the equation
  \[
  \alpha^2 = \sum_{i=1}^n (\bar \gamma|x_i| - 1)_+^2.
  \]
  This is a piecewise linear quadratic equation with exactly one positive root
  because the function on the right-hand side is zero for $\bar\gamma \in (0,1/\|x\|_\infty]$ and is strictly increasing on $(1/\|x\|_\infty,\infty)$, mapping this interval to $(0,\infty)$. The equation
  can be solved in $O(n\log n)$ time. Once $\rbar$ is found, we can
  obtain $\pprox_{\alpha\kappa}(x)=\proj_{[\|\cdot\|_\infty\le \rbar]}(x)$.
  \QEDA
\end{example}

\section{Constructing smooth gauge dual problems}\label{sec:smooth_dual}

The polar envelope can be naturally incorporated in gauge optimization
problems to yield smooth gauge dual problems. We show how a primal
solution can be recovered after solving the smooth dual problem.
First we collect in the following proposition some essential
properties of the polar proximal map of a continuous gauge.

\begin{proposition}[Polar proximal map of a continuous
  gauge]\label{property_2}
  Let $\kappa$ be a continuous gauge and $\alpha > 0$. Then
  \begin{equation}\label{formula}
  \pprox_{\alpha\kappa}(x) = \proj_{[\kappa\le\kappa_\alpha\!(x)]}(x).
  \end{equation}
  Moreover, the following hold.
  \begin{enumerate}[{\rm (i)}]
    \item $\|\pprox_{\alpha\kappa}(x)\|\le \|x\|$ for all $x$.
    \item For any $\beta> 0$ and $M > 0$, the function
      $x\mapsto \pprox_{\alpha\kappa}(x)$ is globally Lipschitz
      continuous in the set
      $\Xi_{M,\beta}:=\set{x | \|x\|\le M, \kappa_\alpha(x)\ge
        \beta}$. Specifically, it holds that
        \[
        \|\pprox_{\alpha \kappa}(x) - \pprox_{\alpha \kappa}(y)\|\le \frac{3M}{\alpha\beta}\|x-y\|
        \]
        for any $x$ and $y$ in $\Xi_{M,\beta}$.
    \item At any $x$ satisfying $\kappa_\alpha(x) > 0$, the function $\nabla \kappa_\alpha$ is locally Lipschitz.
  \end{enumerate}
\end{proposition}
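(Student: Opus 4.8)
The plan is to first secure the projection formula~\eqref{formula} and then read off parts~(i)--(iii) from it. For any $x$ with $\kappa_\alpha(x)>0$, formula~\eqref{formula} is exactly the conclusion of \cref{example_cont_gauge}, since there $\rbar=\kappa_\alpha(x)$ and $\pprox_{\alpha\kappa}(x)=\proj_{[\kappa\le\rbar]}(x)$. The residual case $\kappa_\alpha(x)=0$ I would dispatch directly: writing $\xbar=\pprox_{\alpha\kappa}(x)$, the definition gives $0=\kappa_\alpha(x)=\max\{\kappa(\xbar),(1/\alpha)\|x-\xbar\|\}$, which forces $\xbar=x$ and $\kappa(x)=0$, so $x\in[\kappa\le 0]$ and $\proj_{[\kappa\le 0]}(x)=x=\xbar$, as required. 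Part~(i) is then immediate from the formula: the set $[\kappa\le\kappa_\alpha(x)]$ contains the origin (because $\kappa(0)=0\le\kappa_\alpha(x)$), and testing the variational inequality for the projection at the origin yields $\ip{x-\xbar}{\xbar}\ge 0$; combined with Cauchy--Schwarz this gives $\|\xbar\|^2\le\ip{x}{\xbar}\le\|x\|\,\|\xbar\|$, hence $\|\xbar\|\le\|x\|$.

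Part~(ii) is the heart of the statement and the step I expect to be the main obstacle, because as $x$ varies both the projected point \emph{and} the level set onto which one projects change at once. The plan is to decouple these two effects. Assuming without loss of generality that $r_x:=\kappa_\alpha(x)\ge\kappa_\alpha(y)=:r_y$ (both $\ge\beta$), I would split
\[
  \pprox_{\alpha\kappa}(x)-\pprox_{\alpha\kappa}(y)
  =\bigl[\proj_{[\kappa\le r_x]}(x)-\proj_{[\kappa\le r_x]}(y)\bigr]
  +\bigl[\proj_{[\kappa\le r_x]}(y)-\proj_{[\kappa\le r_y]}(y)\bigr].
\]
The first bracket is bounded by $\|x-y\|$ via nonexpansiveness of the projection onto the fixed set $[\kappa\le r_x]$. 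The decisive trick for the second bracket is to exploit positive homogeneity of both $\kappa_\alpha$ (a gauge, by \cref{cor:bounded}) and $\pprox_{\alpha\kappa}$ (\cref{th:polar-envelope-subdiff}(iii)): since $\kappa_\alpha((r_x/r_y)y)=r_x$, formula~\eqref{formula} gives $(r_x/r_y)\pprox_{\alpha\kappa}(y)=\proj_{[\kappa\le r_x]}((r_x/r_y)y)$, so this scaled point and $\proj_{[\kappa\le r_x]}(y)$ are two projections onto the \emph{same} set. Nonexpansiveness bounds their difference by $((r_x-r_y)/r_y)\|y\|$, and a triangle inequality together with Part~(i) then bounds the second bracket by $(2\|y\|/r_y)(r_x-r_y)$. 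Using $r_x-r_y\le(1/\alpha)\|x-y\|$ (the $1/\alpha$-Lipschitz continuity of $\kappa_\alpha$ from \cref{known}(i)) together with $\|y\|\le M$ and $r_y\ge\beta$ yields the estimate $(1+2M/(\alpha\beta))\|x-y\|$. Finally, nonemptiness of $\Xi_{M,\beta}$ forces $M\ge\alpha\beta$, since $\beta\le\kappa_\alpha(x)\le(1/\alpha)\|x\|\le M/\alpha$ by \cref{cor:bounded}; hence $1+2M/(\alpha\beta)\le 3M/(\alpha\beta)$, giving the stated constant.

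For Part~(iii) I would feed the Lipschitz bound of Part~(ii) into the gradient formula of \cref{th:polar-envelope-subdiff}(iv). Fixing $x_0$ with $\kappa_\alpha(x_0)>0$, continuity of $\kappa_\alpha$ supplies a bounded neighborhood $U\subseteq\Xi_{M,\beta}$ for suitable $M,\beta>0$, on which $\xbar(x):=\pprox_{\alpha\kappa}(x)$ is Lipschitz. In the representation $\nabla\kappa_\alpha(x)=\frac{\|x-\xbar\|}{\alpha\ip{x}{x-\xbar}}(x-\xbar)$, the factors $\|x-\xbar\|$ and $x-\xbar$ are bounded and Lipschitz on $U$, while the denominator is uniformly bounded away from zero because $\ip{x}{x-\xbar}=\|x-\xbar\|^2+\ip{\xbar}{x-\xbar}\ge\|x-\xbar\|^2=\alpha^2\kappa_\alpha(x)^2\ge\alpha^2\beta^2$, again using the projection inequality $\ip{\xbar}{x-\xbar}\ge 0$. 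Local Lipschitz continuity of $\nabla\kappa_\alpha$ then follows from the elementary closure of bounded Lipschitz maps under sums, products, and quotients with denominator bounded below. In short, once formula~\eqref{formula} and the estimate of Part~(ii) are established, parts~(i) and~(iii) are bookkeeping; the genuine difficulty is the homogeneity-based decoupling in Part~(ii).
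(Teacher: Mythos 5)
Your proposal is correct, and for the formula, Part~(i), and Part~(iii) it coincides with the paper's own proof (the paper compresses Part~(iii) into ``follows immediately'' from \cref{th:polar-envelope-subdiff}(iv) and Part~(ii); your quotient argument, with the lower bound $\ip{x}{x-\xbar}\ge\|x-\xbar\|^2=\alpha^2\kappa_\alpha(x)^2\ge\alpha^2\beta^2$, is exactly the omitted bookkeeping). The genuine difference is the decomposition in Part~(ii), writing $r_x:=\kappa_\alpha(x)$ and $r_y:=\kappa_\alpha(y)$. The paper stays entirely inside evaluations of the polar proximal map: it inserts the single intermediate point $\pprox_{\alpha\kappa}(r_x y/r_y)$, bounds the first leg by its relation~\eqref{nonexpansive_1} (nonexpansiveness of $\pprox_{\alpha\kappa}$ between points of equal envelope value), and the second leg by positive homogeneity of $\pprox_{\alpha\kappa}$ plus Part~(i). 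You instead decouple ``same level set, different points'' from ``same point, different level sets'': your first bracket is plain nonexpansiveness of $\proj_{[\kappa\le r_x]}$, and your second bracket is handled by the same scaling trick --- observing that $(r_x/r_y)y$ has envelope value $r_x$, so that $(r_x/r_y)\pprox_{\alpha\kappa}(y)$ is itself a projection onto $[\kappa\le r_x]$. Unwound, both arguments yield the identical intermediate estimate $\|x-y\|+2(\|y\|/r_y)\,|r_x-r_y|$, and both finish with the $1/\alpha$-Lipschitz continuity of $\kappa_\alpha$ from \cref{known}(i); the only divergence is how the leading coefficient $1$ is absorbed into the stated constant. The paper uses $1\le\kappa_\alpha(y)/\beta$ together with $\kappa_\alpha(y)\le M/\alpha$ in its steps (b)--(c), while you invoke the separate (and correct) observation that a nonempty $\Xi_{M,\beta}$ forces $\alpha\beta\le M$, via $\beta\le\kappa_\alpha(x)\le(1/\alpha)\|x\|\le M/\alpha$ from \cref{cor:bounded}. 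What your route buys is that it makes the sharper constant $1+2M/(\alpha\beta)\le 3M/(\alpha\beta)$ explicit; what the paper's buys is never having to introduce a projection that is not itself a value of $\pprox_{\alpha\kappa}$.
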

\begin{proof}
  We first prove~\eqref{formula}. In view of
  \cref{example_cont_gauge}, it suffices to show that~\eqref{formula}
  holds also in the case when $\kappa_\alpha(x) = 0$. Fix any such
  $x$. Write $\ybar = \pprox_{\alpha\kappa}(x)$. Then we have
  $\kappa(\ybar) = 0$ and $\|\ybar - x\| = 0$. Consequently, we have
  $\ybar = x$ and that $\kappa(x) = 0$. In particular, this indicates
  that we can write
  $\ybar = x = \proj_{[\kappa\le 0]}(x) = \proj_{[\kappa\le
    \kappa_\alpha(x)]}(x)$. This proves~\eqref{formula}.

  We now prove Part~(i). In view of~\eqref{formula} and the fact that
  $\kappa(0)\le\kappa_\alpha(x)$ for any $x$, we have from the
  definition of projection that
  \[
    \ip{x-\pprox_{\alpha\kappa}(x)}{\,0-\pprox_{\alpha\kappa}(x)}\le 0,
  \]
  which implies that
  $\|\pprox_{\alpha\kappa}(x)\|^2 \le
  \ip{x}{\pprox_{\alpha\kappa}(x)}$.  The conclusion of Part~(i) now
  follows from this and the Cauchy-Schwartz inequality.

  Next, in view of~\eqref{formula} and the nonexpansiveness of
  projections onto closed convex sets, it follows that
  \begin{equation}\label{nonexpansive_1}
  \|\pprox_{\alpha\kappa}(x) - \pprox_{\alpha\kappa}(y)\|\le \|x - y\|
  \end{equation}
  whenever $\kappa_\alpha(x) = \kappa_\alpha(y)$. Now, consider any $x$, $y\in \Xi_{M,\beta}$. Thus,
  \[
  \begin{aligned}
     &\|\pprox_{\alpha\kappa}(x) - \pprox_{\alpha\kappa}(y)\|\\
     &\le \|\pprox_{\alpha\kappa}(x) -\pprox_{\alpha\kappa}(\kappa_\alpha(x)y/\kappa_\alpha(y))\| + \|\pprox_{\alpha\kappa}(\kappa_\alpha(x)y/\kappa_\alpha(y)) - \pprox_{\alpha\kappa}(y)\|\\
     &= \|\pprox_{\alpha\kappa}(x) -\pprox_{\alpha\kappa}(\kappa_\alpha(x)y/\kappa_\alpha(y))\| + \|\kappa_\alpha(x)\pprox_{\alpha\kappa}(y/\kappa_\alpha(y)) - \pprox_{\alpha\kappa}(y)\|\\
     &\le \|x - \kappa_\alpha(x)y/\kappa_\alpha(y)\| + \|\kappa_\alpha(x)\pprox_{\alpha\kappa}(y/\kappa_\alpha(y)) - \pprox_{\alpha\kappa}(y)\|\\
     & = \kappa_\alpha(y)^{-1}\big[\,\|\kappa_\alpha(y)x - \kappa_\alpha(x)y\| + \|\pprox_{\alpha\kappa}(y)\|\cdot|\kappa_\alpha(y) - \kappa_\alpha(x)|\,\big],
  \end{aligned}
  \]
  where the equalities follow from the positive homogeneity of
  $\pprox_{\alpha\kappa}$
  (cf.~\cref{th:polar-envelope-subdiff}[iii]), and the second
  inequality follows from~\eqref{nonexpansive_1}. Next, using Part~(i),
  \[
  \begin{aligned}
    \|\pprox_{\alpha\kappa}(x) - \pprox_{\alpha\kappa}(y)\|
     & \le \kappa_\alpha(y)^{-1}\big[\,\|\kappa_\alpha(y)x - \kappa_\alpha(x)y\| + \|y\|\cdot|\kappa_\alpha(y) - \kappa_\alpha(x)|\,\big]\\
     & \overset{\rm(a)}{\le} \kappa_\alpha(y)^{-1}\big[\,\|\kappa_\alpha(y)(x-y)\| + 2\|y\|\cdot|\kappa_\alpha(x) - \kappa_\alpha(y)|\,\big]\\
     & \overset{\rm(b)}{\le} \beta^{-1}\big[\,\|\kappa_\alpha(y)(x-y)\| + 2M|\kappa_\alpha(x) - \kappa_\alpha(y)|\,\big]\\
     & \overset{\rm(c)}{\le} \frac{3M}{\alpha\beta}\|x-y\|.
  \end{aligned}
  \]
  where (a) follows from the triangle inequality, (b) follows from the
  definition of $\Xi_{M,\beta}$, and (c) follows from the Lipschitz
  continuity of $\kappa_\alpha$ (cf.~\cref{known}[i]). This
  proves global Lipschitz continuity on $\Xi_{M,\beta}$. Finally, the
  conclusion in Part~(iii) follows immediately from this and the
  formula of $\nabla \kappa_\alpha$ given in
  \cref{th:polar-envelope-subdiff}(iv).
\end{proof}

\subsection{Sublinear regularization}

\cref{property_2} suggests a natural smoothing strategy for the following gauge optimization problem
\begin{subequations} \label{eq:36}
\begin{equation} \label{eq:bp0}
  \minimize_{x\in\Xscr}\quad \kappa(x) \quad\st\quad  \rho(b - Ax)\le \sigma,
\end{equation}
where $\kappa$ and $\rho$ are both closed gauges and
$A:\Xscr \to \Yscr$ is a linear map. We assume that
$\sigma \in [0,\rho(b))$, that $\kappa^{-1}(0)=\set{0}$ and $\rho^{-1}(0) = \set{0}$, and that the data satisfy the constraint qualification
\begin{equation}\label{CQ}
  {\rm ri}\,\dom \kappa\cap A^{-1}{\rm ri}\,{\cal C} \neq \emptyset.
\end{equation}
\end{subequations}
Here, we define ${\cal C} := \set{u\mid \rho(b - u)\le \sigma}$.

Consider the following regularization of~\eqref{eq:bp0}
\begin{equation} \label{eq:bp}
  \minimize_{x\in\Xscr}\quad \kappa(x)+\alpha\|x\| \quad\st\quad  \rho(b - Ax)\le \sigma,
\end{equation}
where $\alpha$ is a positive regularization parameter. We expect that, for small values of $\alpha$, solutions of the
perturbed problem are good approximations to solutions of the original
problem~\eqref{eq:36}. In particular, because the objective
$\kappa + \alpha\|\cdot\|$ epi-converges to $\kappa$ \cite[Proposition
7.4(c)]{rtrw:1998}, it follows that cluster points (if they exist) of
solutions of~\eqref{eq:bp} are also minimizers of~\eqref{eq:36}; see
\cite[Theorem 7.31(b)]{rtrw:1998}.

The objective of the perturbed problem~\eqref{eq:bp} is in general a
nonsmooth gauge. As we demonstrate below, however, the gauge dual has
a smooth objective.

\subsection{Primal and dual pairs under convolution}

The corresponding gauge dual is given by
\begin{equation} \label{eq:gdbp}
  \minimize_{y\in\Yscr}\quad (\kappa^\circ \pconv (1/\alpha)\|\cdot\|)(A^*y) \quad\st\quad  \ip{b}{y} - \sigma\rho^\circ(y)\ge 1,
\end{equation}
where the objective in~\eqref{eq:gdbp} follows from
\citet[Section~1.1]{FriedlanderMacedoPong:2014} and an application of
\cref{lem1} with $\kappa_1 = \kappa^\circ$ and
$\kappa_2 = (1/\alpha)\|\cdot\|$. The next result concerning the gauge
dual pair~\eqref{eq:bp} and~\eqref{eq:gdbp} establishes that the dual
problem is in some sense smooth, and gives a formula for the
relationship between the primal and dual solutions.

\begin{theorem}[Primal and dual problems under polar convolution]\label{thm1.5}
  Consider problems~\eqref{eq:bp} and~\eqref{eq:gdbp}, where $\kappa$, $\rho$, $A$, $b$ and $\sigma$ are as in~\eqref{eq:36}. Then the following conclusions hold.
  \begin{enumerate}[{\rm (i)}]
    \item The optimal values of~\eqref{eq:bp} and~\eqref{eq:gdbp} are finite, positive, and are attained. Moreover, they are reciprocal of each other.
    \item The objective of~\eqref{eq:gdbp} is smooth with a locally Lipschitz gradient on the feasible set of~\eqref{eq:gdbp}.
    \item Let $\ybar$ be an optimal solution of~\eqref{eq:gdbp} and $\rbar$ be its optimal value. Then
    \begin{equation}\label{hatx}
    \xhat = \frac{1/\rbar}{\kappa(\prox_{\rbar\kappa}(A^*\ybar))+\alpha\|\prox_{\rbar\kappa}(A^*\ybar)\|}\prox_{\rbar\kappa}(A^*\ybar)
    \end{equation}
    is an optimal solution of~\eqref{eq:bp}.
  \end{enumerate}
\end{theorem}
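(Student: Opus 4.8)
The theorem couples a gauge-duality statement (part i), a smoothness statement (part ii), and a primal-recovery formula (part iii). I would organize the proof around the gauge-duality framework of \citet{FriedlanderMacedoPong:2014}, treating \eqref{eq:bp} and \eqref{eq:gdbp} as a gauge primal-dual pair and exploiting that the dual objective is, by construction, a polar envelope evaluated along a linear map.

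\emph{Part (i).} First I would verify that \eqref{eq:bp} is a legitimate gauge program: the objective $\kappa+\alpha\|\cdot\|$ is a gauge whose only zero is the origin (since $\kappa^{-1}(0)=\{0\}$), and the constraint set $\Cscr=\{u\mid\rho(b-u)\le\sigma\}$ is a closed convex set not containing the origin because $\sigma<\rho(b)$. The constraint qualification~\eqref{CQ} is exactly what \citet{FriedlanderMacedoPong:2014} require for strong gauge duality, so I would invoke their duality theorem to conclude that the optimal values are finite, positive, attained on both sides, and reciprocal. The key preliminary is identifying the dual objective correctly: using \cref{lem1} with $\kappa_1=\kappa^\circ$ and $\kappa_2=(1/\alpha)\|\cdot\|$ gives $(\kappa^\circ\pconv(1/\alpha)\|\cdot\|)=((\kappa+\alpha\|\cdot\|)^\circ)^\circ$, confirming that \eqref{eq:gdbp} is genuinely the gauge polar of the regularized primal objective.

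\emph{Part (ii).} Here I would recognize that the dual objective is $y\mapsto(\kappa^\circ)_\alpha(A^*y)$, i.e.\ the polar envelope of the \emph{closed gauge} $\kappa^\circ$ composed with the linear map $A^*$. Since $\kappa^{-1}(0)=\{0\}$ forces $\kappa^\circ$ to be continuous (its polar is bounded), smoothness and the local-Lipschitz gradient follow from \cref{property_2}(iii) together with the chain rule, \emph{provided} $(\kappa^\circ)_\alpha(A^*y)>0$ on the feasible set. Establishing this positivity is the crux: I would argue that if the dual objective vanished at a feasible $y$, then $A^*y$ would lie in the zero-level set of $(\kappa^\circ)_\alpha$, which (by continuity of $\kappa^\circ$ and \cref{th:polar-envelope-subdiff}[ii]) forces $A^*y=0$, and then the finiteness/positivity of the optimal value from part~(i) would be contradicted, since the dual optimal value is positive and attained.

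\emph{Part (iii).} This is the hard part. Given the dual optimum $\ybar$ with value $\rbar$, I would first compute $\nabla$ of the dual objective at $\ybar$ via \cref{th:polar-envelope-subdiff}(iv) and the chain rule, writing it in terms of $\prox_{\rbar\kappa}(A^*\ybar)$---the notation in~\eqref{hatx} signals that the recovery involves the \emph{Moreau} proximal map of $\kappa$ at scale $\rbar$, arising from the polar-envelope subgradient formula applied to $\kappa^\circ$. The plan is to write the optimality (KKT) conditions for the dual problem~\eqref{eq:gdbp}, identify the associated primal direction from the gradient of the dual objective, and then rescale so that the recovered point $\xhat$ is both primal-feasible and achieves the reciprocal optimal value $1/\rbar$ guaranteed by part~(i). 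The normalizing denominator $\kappa(\prox_{\rbar\kappa}(A^*\ybar))+\alpha\|\prox_{\rbar\kappa}(A^*\ybar)\|$ in~\eqref{hatx} is precisely the value of the regularized primal objective at the unnormalized direction, so dividing by it enforces the value-$1/\rbar$ scaling. The main obstacle I anticipate is threading the polar/conjugate correspondences so that the subgradient of the polar envelope of $\kappa^\circ$ translates back into the $\prox_{\rbar\kappa}$ expression; I would rely on \cref{th:polar-envelope-subdiff}(i)--(iv), the polar-gauge inequality~\eqref{eq:9}, and complementary slackness in the dual constraint $\ip{b}{y}-\sigma\rho^\circ(y)\ge1$ to verify both primal feasibility ($\rho(b-A\xhat)\le\sigma$) and optimality of $\xhat$.
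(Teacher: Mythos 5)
Your parts (i) and (ii) are essentially the paper's own proof: the same verification that the constraint qualification~\eqref{CQ} passes to the regularized objective (since $\dom(\kappa+\alpha\|\cdot\|)=\dom\kappa$), the same appeal to \citet[Corollary~5.6]{FriedlanderMacedoPong:2014} for finiteness, positivity, attainment and reciprocity, and the same observation that positivity of the attained dual value forces $(\kappa^\circ\pconv(1/\alpha)\|\cdot\|)(A^*y)>0$ at every feasible $y$, so that \cref{property_2}(iii) applies. Two small blemishes: your detour through ``$A^*y=0$'' is unnecessary (a zero objective value at a feasible point already contradicts positivity of the optimal value), and your \cref{lem1} identity has a slip --- the dual objective is $(\kappa+\alpha\|\cdot\|)^\circ$ evaluated at $A^*y$, not $((\kappa+\alpha\|\cdot\|)^\circ)^\circ$, which would be the primal objective itself.

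For part (iii) you take a genuinely different route, and it is here that your proposal stops short of a proof. The paper never writes KKT conditions for~\eqref{eq:gdbp}. Instead it takes a primal optimal solution $\xhat$, which \emph{exists by part (i)}, picks $\bar u$ with $\kappa^\circ(\bar u)=(1/\alpha)\|A^*\ybar-\bar u\|=\rbar$ (via \cref{th:polar-envelope-subdiff}(ii)--(iii) and continuity of $\kappa^\circ$), and expands the strong-duality identity $1=(\kappa(\xhat)+\alpha\|\xhat\|)\cdot\rbar$ into the chain $1=\kappa(\xhat)\kappa^\circ(\bar u)+\|\xhat\|\,\|A^*\ybar-\bar u\|\ge\ip{\xhat}{A^*\ybar}=\ip{b-\widehat v}{\ybar}\ge\ip{b}{\ybar}-\sigma\rho^\circ(\ybar)\ge 1$, using the polar-gauge inequality~\eqref{eq:9} and dual feasibility. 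Equality throughout forces alignment, $\xhat=\gamma(A^*\ybar-\bar u)$ with $\gamma>0$; the scale is pinned down by $\kappa(\xhat)+\alpha\|\xhat\|=1/\rbar$; and $A^*\ybar-\bar u=\prox_{\rbar\kappa}(A^*\ybar)$ follows from~\eqref{formula} (so $\bar u=\proj_{[\kappa^\circ\le\rbar]}(A^*\ybar)$) together with the Moreau identity. The payoff of this design is that attainment from part (i) makes any feasibility-or-optimality verification of a constructed candidate unnecessary: the solution already exists, and the collapsed inequalities show it can only be~\eqref{hatx}.

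Your construct-and-verify plan can be completed, but the step you defer (``threading the polar/conjugate correspondences'') is exactly where all the work sits, and your outline does not resolve it. Concretely, from dual stationarity $c\,A(A^*\ybar-\bar u)=\mu(b-\sigma w)$ with $w\in\partial\rho^\circ(\ybar)$ you would still need: a Slater-type condition for~\eqref{eq:gdbp} to justify KKT at all (it holds by rescaling a feasible point, but must be said); $\mu>0$, which follows from $\ip{A^*\ybar}{A^*\ybar-\bar u}>0$ in \cref{th:polar-envelope-subdiff}(iv); and, most importantly, primal feasibility $\rho(b-A\xhat)\le\sigma$, which requires showing the scalars conspire so that $b-A\xhat=\sigma w$ exactly. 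After pairing the stationarity equation with $\ybar$ and using complementary slackness, that verification reduces to the Fenchel--Moreau equality $\ip{p}{\bar u}=\rbar\kappa(p)$ for $p=\prox_{\rbar\kappa}(A^*\ybar)$ and $\bar u=\proj_{[\kappa^\circ\le\rbar]}(A^*\ybar)$ --- none of which appears in your outline, and the prox/projection translation you flag as an ``obstacle'' is resolved by~\eqref{formula} plus the Moreau identity, as in the paper. As written, part (iii) is a plausible roadmap rather than a proof, and the paper's inequality-collapse argument is both shorter and sidesteps every one of these verifications.
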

\begin{proof}
  Note from~\eqref{CQ} that we have
  \[
  {\rm ri}\,\dom \left(\kappa +\alpha\|\cdot\|\right)\cap A^{-1}{\rm ri}\,{\cal C} = {\rm ri}\,\dom \kappa\cap A^{-1}{\rm ri}\,{\cal C}\neq \emptyset.
  \]
  Also, from \cref{known}(i), we trivially have
  ${\rm ri}\,\dom (\kappa^\circ\pconv (1/\alpha)\|\cdot\|)\cap A^*{\rm
    ri}\,{\cal C}' = A^*{\rm ri}\,{\cal C}'\neq \emptyset$, where
  ${\cal C}'$ is the antipolar set of ${\cal C}$, which is a nonempty
  set because $0\notin \cal C$, thanks to the assumption that
  $\rho(b)> \sigma$. In view of these and
  \citet[Corollary~5.6]{FriedlanderMacedoPong:2014}, the optimal value
  of~\eqref{eq:gdbp} is the reciprocal of the optimal value
  of~\eqref{eq:bp}, and both optimal values are finite, positive, and
  attained.

  Next, since $\kappa^{-1}(0)=\set{0}$, it follows that $\kappa^\circ$ is
  continuous. Moreover, because the optimal value of~\eqref{eq:gdbp}
  is attained and is positive, it follows that
  $(\kappa^\circ\pconv (1/\alpha)\|\cdot\|)(A^*y) > 0$ for any
  $y$ feasible for~\eqref{eq:gdbp}. The local Lipschitz
  continuity of the gradient of the objective of~\eqref{eq:gdbp} now
  follows from this and \cref{property_2}(iii).

  We now prove Part~(iii). Let $\ybar$ be an optimal solution
  of~\eqref{eq:gdbp} and let $\bar u$ satisfy
\begin{equation}\label{eq:haha}
  \kappa^\circ(\bar u) = (1/\alpha)\|A^*\ybar - \bar u\| = (\kappa^\circ \pconv (1/\alpha)\|\cdot\|)(A^*\ybar) =: \rbar > 0,
\end{equation}
which exists according to \cref{th:polar-envelope-subdiff}(ii)
and (iii), and the continuity of $\kappa^\circ$. Let $\xhat$ be a
solution of~\eqref{eq:bp} and let $\widehat v$ be such that
$b = A\xhat + \widehat v$ and $\rho(\widehat v) \le \sigma$. Then we have from
strong duality of the gauge dual pairs~\eqref{eq:bp} and
\eqref{eq:gdbp} that
\begin{equation}\label{eq:3}
\begin{aligned}
  1 & = (\kappa(\xhat) + \alpha\|\xhat\|)\cdot (\kappa^\circ \pconv (1/\alpha)\|\cdot\|)(A^*\ybar)\\
  & \overset{\rm(a)}= \kappa(\xhat)\cdot \kappa^\circ(\bar u) + \|\xhat\|\cdot \|A^*\ybar - \bar u\|\\
  & \overset{\rm(b)}\ge \ip{\xhat}{\bar u} + \ip{\xhat}{A^*\ybar - \bar u} = \ip{\xhat}{A^*\ybar} \\
  & \overset{\rm(c)}= \ip{b - \widehat v}{\ybar}
    \overset{\rm(d)}\ge \ip{b}{\ybar}-\sigma\rho^\circ(\ybar)\ge 1,
  \end{aligned}
  \end{equation}
where (a) follows from~\eqref{eq:haha}, (b) follows from~\eqref{eq:9},
(c) follows from the fact that $A\xhat + \widehat v = b$, and (d) follows
from \citet[Proposition~2.1(iii)]{FriedlanderMacedoPong:2014} (for
$\sigma > 0$) and the continuity of $\rho^\circ$ (for $\sigma = 0$),
thanks to the assumption that $\rho^{-1}(0)=\set{0}$. Thus, equality holds throughout
the above relation, and we have in particular that
\[
  \|\xhat\|\cdot\|A^*\ybar- \bar u\|
  = \ip{\xhat}{A^*\ybar - \bar u}.
\]
Since $\xhat \neq 0$, this implies that
$\xhat = \gamma (A^*\ybar - \bar u)$ for some $\gamma > 0$. Next,
combine~\eqref{eq:haha} with the first equation in~\eqref{eq:3} to
obtain $\kappa(\xhat) + \alpha\|\xhat\| = \rbar^{-1}$. Together with
the expression that we just derived for $\xhat$, we deduce that
\[
  \gamma = \frac{1/\rbar}{\kappa(A^*\ybar - \bar u) + \alpha\|A^*\ybar - \bar u\|}.
\]
Recall from~\eqref{formula} that
$\bar u = \proj_{\kappa^\circ(\cdot)\le \rbar}(A^*\ybar)$ because
$\kappa^\circ$ is continuous. Thus, by the Moreau identity,
$A^*\ybar - \bar u = \prox_{\rbar\kappa}(A^*\ybar)$. Then we can
compute $\xhat$ as~\eqref{hatx}.
\end{proof}

As an immediate application of \cref{thm1.5}, we consider the
basis pursuit problem \citep{chendonosaun:2001}, which takes the form
\begin{equation} \label{eq:bp00}
  \minimize_{x\in\R^n}\quad \|x\|_1 \quad\st\quad  Ax = b,
\end{equation}
where $A\in \R^{m\times n}$, $A^{-1}\set{b}\neq \emptyset$, and
$b\neq 0$. This is just~\eqref{eq:bp0} with $\kappa = \|\cdot\|_1$,
$\rho = \delta_{\{0\}}$, and $\sigma = 0$, and it is routine to check
that the assumptions on $\kappa$, $\rho$, $A$, $b$ and $\sigma$ for
\eqref{eq:36} are satisfied. For each $\alpha>0$, one can consider the
regularization of~\eqref{eq:bp00}
\begin{equation} \label{eq:bp1}
  \minimize_{x\in\R^n}\quad \|x\|_1+\alpha\|x\| \quad\st\quad  Ax = b,
\end{equation}
and its gauge dual problem
\begin{equation} \label{eq:gdbp1} \minimize_{y\in\R^m}\quad
  (\|\cdot\|_\infty \pconv (1/\alpha)\|\cdot\|)(A^*y) \quad\st\quad
  \ip{b}{\lambda} \ge 1.
\end{equation}
We thus have the following immediate corollary of \cref{thm1.5}.

\begin{corollary}[Polar smoothing of gauge dual]\label{cor3}
  Consider the gauge dual pair~\eqref{eq:bp1} and~\eqref{eq:gdbp1}
  with $A^{-1}\set{b}\neq \emptyset$ and $b\neq 0$. Then the following
  conclusions hold.
  \begin{enumerate}[{\rm (i)}]
    \item The optimal values of~\eqref{eq:bp1} and~\eqref{eq:gdbp1} are finite, positive and are attained. Moreover, they are reciprocal of each other.
    \item The objective of~\eqref{eq:gdbp1} is smooth with a locally Lipschitz gradient on the feasible set of~\eqref{eq:gdbp1}.
    \item Let $\bar \lambda$ be an optimal solution of~\eqref{eq:gdbp1} and $\rbar$ be its optimal value. Then
    \begin{equation}\label{hatx1}
      \xhat = \frac{1/\rbar}{\|\prox_{\rbar\|\cdot\|_1}(A^*\ybar )\|_1+\alpha\|\prox_{\rbar\|\cdot\|_1}(A^*\ybar)\|}\prox_{\rbar\|\cdot\|_1}(A^*\ybar)
    \end{equation}
    is an optimal solution of~\eqref{eq:bp1}.
  \end{enumerate}
\end{corollary}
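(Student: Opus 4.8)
The plan is to recognize that \cref{cor3} is a direct specialization of \cref{thm1.5} to the concrete data $\kappa = \|\cdot\|_1$, $\rho = \delta_{\{0\}}$, and $\sigma = 0$, so the proof should consist almost entirely of verifying that these choices satisfy the hypotheses of \eqref{eq:36} and then transcribing the conclusions. First I would confirm the setup: with $\kappa = \|\cdot\|_1$ and $\rho = \delta_{\{0\}}$, both are closed gauges; the condition $\sigma \in [0,\rho(b))$ reduces to $0 < \rho(b) = +\infty$ (since $b\neq 0$ forces $\delta_{\{0\}}(b) = +\infty$), which holds; and $\kappa^{-1}(0) = \{0\}$ is clear for the $1$-norm while $\rho^{-1}(0) = \{0\}$ holds for $\delta_{\{0\}}$. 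The constraint qualification \eqref{CQ} becomes ${\rm ri}\,\dom\|\cdot\|_1 \cap A^{-1}{\rm ri}\,\Cscr \neq\emptyset$; here $\dom\|\cdot\|_1 = \R^n$ and $\Cscr = \{u\mid \delta_{\{0\}}(b-u)\le 0\} = \{b\}$, so $A^{-1}{\rm ri}\,\Cscr = A^{-1}\{b\}$, and the assumption $A^{-1}\{b\}\neq\emptyset$ supplies exactly the nonemptiness required.

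Once the hypotheses of \eqref{eq:36} are verified, conclusions (i) and (ii) of \cref{cor3} follow \emph{verbatim} from the corresponding parts of \cref{thm1.5}, since \eqref{eq:bp1} and \eqref{eq:gdbp1} are literally \eqref{eq:bp} and \eqref{eq:gdbp} under the present substitution. For conclusion (iii), I would observe that the polar of $\kappa = \|\cdot\|_1$ is $\kappa^\circ = \|\cdot\|_\infty$, which matches the objective of \eqref{eq:gdbp1}, and then specialize formula \eqref{hatx}: writing $\kappa(\prox_{\rbar\kappa}(A^*\ybar)) = \|\prox_{\rbar\|\cdot\|_1}(A^*\ybar)\|_1$ recovers \eqref{hatx1} directly. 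One small bookkeeping point is the dual variable name: \eqref{eq:gdbp1} is stated in terms of $\lambda$ (and $\bar\lambda$) whereas \cref{thm1.5} uses $y$ (and $\ybar$), so I would simply note that $\bar\lambda$ plays the role of $\ybar$ and use them interchangeably.

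There is essentially no mathematical obstacle here, since the corollary is a pure instantiation; the only thing that requires a moment's care is the translation of $\rho = \delta_{\{0\}}$ with $\sigma = 0$ into the equality constraint $Ax = b$ and checking that $\rho(b) > \sigma$ holds in the degenerate sense (namely $+\infty > 0$), together with confirming that the antipolar set $\Cscr'$ is nonempty and that the feasibility constraint $\ip{b}{\lambda}\ge 1$ in \eqref{eq:gdbp1} is the correct reduction of $\ip{b}{y} - \sigma\rho^\circ(y)\ge 1$ when $\sigma = 0$. Since the excerpt already remarks that ``it is routine to check that the assumptions $\ldots$ are satisfied,'' the proof can be kept very brief, stating that \cref{cor3} follows immediately from \cref{thm1.5} by the indicated substitution, with the explicit verification of \eqref{eq:36} and the simplification $\kappa^\circ = \|\cdot\|_\infty$ being the only details worth recording.
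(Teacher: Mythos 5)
Your proposal is correct and matches the paper exactly: the paper presents \cref{cor3} as an immediate specialization of \cref{thm1.5} with $\kappa = \|\cdot\|_1$, $\rho = \delta_{\{0\}}$, $\sigma = 0$, noting only that the hypotheses of \eqref{eq:36} are routine to verify, which is precisely the verification you carried out (including the correct reductions $\Cscr = \{b\}$, $\kappa^\circ = \|\cdot\|_\infty$, and the dual constraint $\ip{b}{y}\ge 1$).
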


Corollary~\ref{cor3}(ii) suggests that, in order to solve~\eqref{eq:bp1}, one can apply gradient descent algorithm with line
search to solve~\eqref{eq:gdbp1}, which is appropriate because the gradient is locally Lipschitz. A solution of~\eqref{eq:bp1} is then
recovered via~\eqref{hatx1}.

As we discussed in connection with the duality correspondences shown
in Figs.~\ref{fig:fenchel} and~\ref{fig:gauge}, we contrast the above
approach with the usual way of constructing smooth {\em Lagrange} dual
problems by adding a {\em strongly convex} term to the primal
objective. In this latter approach, for the functions and data
in~\eqref{eq:36} and $\alpha > 0$, one considers the following
approximation of~\eqref{eq:bp0}:
\begin{equation} \label{eq:bp_L}
  \minimize_{x\in\Xscr}\quad \kappa(x)+(\alpha/2)\|x\|^2 \quad\st\quad  \rho(b - Ax)\le \sigma.
\end{equation}
The Lagrange dual problem of~\eqref{eq:bp_L} takes the form
\begin{equation} \label{eq:Ldbp}
  \maximize_{y\in\Yscr}\quad \ip{b}{y} - (1/2\alpha)\dist^2_{[\kappa^\circ \le 1]}(A^*y) - \sigma\rho^\circ(y).
\end{equation}
This dual problem is the sum of the nonsmooth function
$y\mapsto - \sigma\rho^\circ(y)$ and a smooth function with Lipschitz
gradient, and thus can be suitably solved by first-order methods such
as the proximal-gradient algorithm. The solution $\bar x$
of~\eqref{eq:bp_L} is then recovered from a solution $\ybar$
of~\eqref{eq:Ldbp} via
\[
\bar x = \prox_{(1/\alpha)\kappa}(\alpha^{-1}A^*\ybar).
\]
In contrast to the gauge dualization approach based on the pair~\eqref{eq:bp} and~\eqref{eq:gdbp}, the Lagrange dual pair~\eqref{eq:bp_L} and~\eqref{eq:Ldbp} are not gauge optimization
problems, even though the original problem~\eqref{eq:bp0} that we are
approximating is a gauge optimization problem. Moreover, while the
objective of~\eqref{eq:bp_L} is strongly convex so that the objective
of its Lagrange dual~\eqref{eq:Ldbp} has a smooth component, it is
interesting to note that the objective of~\eqref{eq:bp} is {\em not
  strongly convex} but its gauge dual~\eqref{eq:gdbp} still has an
objective that is smooth in the feasible region.

\section{Proximal-point-like algorithms based on polar envelopes}
\label{sec:polar-prox-algo}

In this section, we discuss proximal-point-like algorithms for solving
the following general convex optimization problem
\begin{subequations} \label{eq:37}
\begin{equation} \label{eq:1}
  \minimize_{x\in\Xscr}\enspace f(x),
\end{equation}
where $f:\Xscr \to \Rpi$ is a proper closed nonnegative convex function with
\begin{equation} \label{eq:1b}
  \inf f > 0\ \ \ \mbox{and}\ \ \ \argmin f\neq \emptyset.
\end{equation}
\end{subequations}

Our proximal-point-like algorithms are based on a specific polar
envelope and the corresponding polar proximal map. To proceed, we
first rewrite~\eqref{eq:1} as the equivalent gauge
optimization problem
\begin{equation*}
  \minimize_{x\in\Xscr,\,\lambda \ge 1}\enspace
  f^\pi(x,\lambda),
\end{equation*}
where the perspective function $f^\pi$ is defined in~\eqref{eq:12}.

\subsection{The projected polar proximal map}
We introduce two important ingredients for the development of our proximal-point-like algorithms, namely the {\em projected} polar envelope and the {\em projected} polar proximal map of $f^\pi(x,\lambda)$: for any $\alpha > 0$,
\[
\begin{aligned}
  \penv(x)
  &:= f^\pi_\alpha(x,1),
    \\
  \rpprox(x)
  &:= \pprox_{\alpha f^\pi}(x,1),
\end{aligned}
\]
where $f^\pi_\alpha$ is shorthand for the polar envelope of the
perspective transform, i.e., $f^\pi_\alpha\equiv(f^\pi)_\alpha$.  Note
that $\rpprox(x)$ is a singleton for any $x$ because $\alpha f^\pi$ is
closed and by virtue of
\cref{th:polar-envelope-subdiff}(iii). We show that the set of
minimizers of the projected polar envelope and the set of ``fixed
points'' of the polar proximal map are closely related to the set of
minimizers of the original problem~\eqref{eq:1}.

We need the following auxiliary lemma concerning $\dom f^\pi$. For
this section only, for any convex set $\Cscr$ we define its negative
polar cone by $\Cscr^-=(\cone\Cscr)^\circ$.
\begin{lemma}
  Consider $f$ as in~\eqref{eq:37}. Then it holds that
  \begin{equation}\label{domf}
  \cl \dom f^\pi = \cl \left(\bigcup_{\lambda > 0}\lambda(\dom f\times \{1\})\right).
\end{equation}
Moreover, for any $(\bar x,\bar\lambda)\in \dom f^\pi$, it holds that
  \begin{equation}\label{normalcone}
    \normal((\xbar,\bar \lambda)\mid\dom f^\pi)
    = \set{w \in [\dom f\times \{1\}]^-\mid
      \ip{w}{(\xbar,\bar\lambda)} = 0}.
  \end{equation}
\end{lemma}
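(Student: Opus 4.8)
The plan is to reduce both statements to facts about the closed convex cone generated by $\dom f\times\{1\}$. Write $S:=\bigcup_{\lambda>0}\lambda(\dom f\times\{1\})=\set{(\lambda d,\lambda)\mid \lambda>0,\ d\in\dom f}$ for the right-hand side of~\eqref{domf} before closure; a short check shows $S$ is a convex cone and $\cone(\dom f\times\{1\})=\{0\}\cup S$, so $\cl S=\cl\cone(\dom f\times\{1\})$ is a closed convex cone. First I would read off $\dom f^\pi$ from the definition~\eqref{eq:12}: a point $(x,\lambda)$ with $\lambda>0$ lies in $\dom f^\pi$ precisely when $\lambda^{-1}x\in\dom f$, i.e.\ when $(x,\lambda)\in S$; the slice $\lambda=0$ contributes $\set{(x,0)\mid \rec f(x)<\infty}$; and $\lambda<0$ contributes nothing. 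Hence $\dom f^\pi=S\cup(\dom\rec f\times\{0\})$, which gives $\cl S\subseteq\cl\dom f^\pi$ at once. For the reverse inclusion it suffices to place each $(x,0)$ with $\rec f(x)<\infty$ in $\cl S$. The key observation is that $\rec f(x)<\infty$ forces $x\in(\dom f)_\infty$: fixing $x_0\in\dom f$, the inequality $f(x_0+tx)\le f(x_0)+t\,\rec f(x)<\infty$ shows $x_0+tx\in\dom f$ for all $t\ge0$. Consequently the points $\lambda(x_0+\lambda^{-1}x,\,1)=(\lambda x_0+x,\ \lambda)$ belong to $S$ for every $\lambda>0$ and converge to $(x,0)$ as $\lambda\downarrow0$, so $(x,0)\in\cl S$. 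This proves $\cl\dom f^\pi=\cl S=\cl\cone(\dom f\times\{1\})$, which is~\eqref{domf}.

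For the normal-cone formula~\eqref{normalcone} I would combine three elementary facts. Since $\dom f^\pi$ is convex and $(\xbar,\bar\lambda)\in\dom f^\pi$, its normal cone is unchanged under closure, $\normal((\xbar,\bar\lambda)\mid\dom f^\pi)=\normal((\xbar,\bar\lambda)\mid\cl\dom f^\pi)$, because each defining inequality $\ip{w}{p-(\xbar,\bar\lambda)}\le0$ passes to limits $p\in\cl\dom f^\pi$ by continuity. By the first part $\cl\dom f^\pi$ equals the closed convex cone $K:=\cl\cone(\dom f\times\{1\})$, and for any $\bar p\in K$ of a convex cone the normal cone takes the standard form $\normal(\bar p\mid K)=\set{w\in\polar K\mid\ip{w}{\bar p}=0}$: testing $\ip{w}{p-\bar p}\le0$ against $0\in K$ and $2\bar p\in K$ forces $\ip{w}{\bar p}=0$, after which the inequality reduces to $w\in\polar K$. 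Finally, polarity is insensitive to closure, so $\polar K=\polar{(\cone(\dom f\times\{1\}))}=[\dom f\times\{1\}]^-$ by the definition $\Cscr^-=\polar{(\cone\Cscr)}$. Chaining these identities gives exactly~\eqref{normalcone}, and the special case $(\xbar,\bar\lambda)=0$ (apex of the cone) is automatically covered.

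I expect the only genuine obstacle to be the reverse inclusion in~\eqref{domf}, namely absorbing the recession slice $\dom\rec f\times\{0\}$ into $\cl S$; the explicit approximating sequence $(\lambda x_0+x,\lambda)\downarrow(x,0)$ settles it once the implication $\rec f(x)<\infty\Rightarrow x\in(\dom f)_\infty$ is in hand. Everything after that is bookkeeping: the invariance of the normal cone and of the polar under closure, and the formula for the normal cone of a convex cone.
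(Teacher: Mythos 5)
Your proof is correct, and its overall skeleton coincides with the paper's: both start from the decomposition $\dom f^\pi=\bigl(\bigcup_{\lambda>0}\lambda(\dom f\times\{1\})\bigr)\cup(\dom\rec{f}\times\{0\})$, reduce \eqref{domf} to absorbing the recession slice $\dom\rec{f}\times\{0\}$ into the closure of the positive-$\lambda$ part, and then obtain \eqref{normalcone} from \eqref{domf} by combining the conic structure of $\cl\dom f^\pi$ with polarity. Where you genuinely differ is in how that crucial inclusion is established. The paper argues abstractly: it first proves $\dom\rec{f}\subseteq[\cl\dom f]_\infty$ by pushing recession cones through the projection of $\epi f$ (using $A(\Cscr_\infty)\subseteq[\cl A(\Cscr)]_\infty$ together with $\epi\rec{f}=(\epi f)_\infty$), and then invokes Lemma~2.1.1 of Auslender and Teboulle to identify $\cl\bigl(\bigcup_{\lambda>0}\lambda(\cl\dom f\times\{1\})\bigr)$ with $([\cl\dom f]_\infty\times\{0\})\cup\bigcup_{\lambda>0}\lambda(\cl\dom f\times\{1\})$, closing a sandwich between $\dom f^\pi$ and its closure. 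You instead exhibit an explicit approximating sequence: from the difference-quotient characterization $\rec{f}(x)=\sup_{t>0}t^{-1}\bigl(f(x_0+tx)-f(x_0)\bigr)$ --- which is precisely where properness, closedness and convexity of $f$ enter --- you deduce $x_0+\lambda^{-1}x\in\dom f$, so that $(\lambda x_0+x,\lambda)$ lies in the positive-$\lambda$ cone and converges to $(x,0)$ as $\lambda\downarrow0$. Your route buys self-containedness (no external lemmas on recession cones of linear images or of lifted convex sets), while the paper's buys brevity and reuses machinery cited elsewhere; both need $f$ closed. On the normal-cone identity the two arguments agree in substance: you simply make explicit the three facts the paper compresses into its final sentence, namely that the normal cone of a convex set at one of its points is unchanged when the set is replaced by its closure, that $\normal(\bar p\mid K)=\set{w\in\polar{K}\mid\ip{w}{\bar p}=0}$ for a closed convex cone $K$ and $\bar p\in K$, and that polars do not distinguish a set from its closure, so $\polar{K}=[\dom f\times\{1\}]^-$.
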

\begin{proof}
  First of all, we note from the definition of the recession cone that
  for any closed convex set $\Cscr$ and linear map $A$ so that
  $A(\Cscr)$ is well defined, one has
\[
A(\Cscr_\infty) \subseteq [\cl A(\Cscr)]_\infty.
\]
Applying this with $\Cscr = \epi f$ and $A = \proj_\Xscr$ and invoking $\epi \rec{f} = (\epi f)_\infty$ gives
\begin{equation}\label{dominclusion}
\dom \rec{f} = \proj_\Xscr(\epi \rec{f}) \subseteq [\cl \proj_\Xscr(\epi f)]_\infty = [\cl \dom f]_\infty.
\end{equation}
Next, it is not hard to see directly from the definition of $f^\pi$ that
\begin{equation}\label{domfpre}
\begin{aligned}
  \dom f^\pi & = (\dom \rec{f} \times \{0\})\cup \bigcup_{\lambda > 0}\lambda(\dom f\times \{1\}).
\end{aligned}
\end{equation}
Then we deduce further that
\begin{equation*}
  \begin{aligned}
    \cl\dom f^\pi&\supseteq\cl \left(\bigcup_{\lambda > 0}\lambda(\dom f\times \{1\})\right)
    \overset{\rm(a)}=\cl \left(\bigcup_{\lambda > 0}\lambda(\cl\dom f\times \{1\})\right)\\
     & \overset{\rm(b)}= ([\cl\dom f]_\infty \times \{0\})\cup \bigcup_{\lambda > 0}\lambda(\cl\dom f\times \{1\})
      \overset{\rm(c)}\supseteq \dom f^\pi,
  \end{aligned}
\end{equation*}
where (a) follows directly from the definition of closure, (b) follows
from \citet[Lemma~2.1.1]{AuT03}, and (c) follows
from~\eqref{dominclusion} and~\eqref{domfpre}. Taking the closure on
both sides of the above inclusion establishes~\eqref{domf}.

Next, using~\eqref{domf} and the definition of negative polar, we have
\begin{equation*}
    \begin{aligned}
      [\dom f^\pi]^- = \left[\cl \left(\bigcup_{\lambda > 0}\lambda(\dom f\times \{1\})\right)\right]^- = [\dom f\times \{1\}]^-.
    \end{aligned}
  \end{equation*}
  The relation~\eqref{normalcone} now follows from this, the
  definition of normal cones, and the fact that $\dom f^\pi$ is a
  cone.
\end{proof}

In our next theorem, we study the relationship between the minimizers
of the projected polar envelope $\penv$ and those of the original
problem~\eqref{eq:1}. This result depends on the subdifferential of
the perspective function, characterized below. We defer to
\citet[Proposition~2.3(v)]{Com18} and \citet[Lemma~3.8]{ABDFM18}
for proof.

\begin{proposition}[Subdifferential of perspective]\label{subdiff_pers}
  Suppose that $f:\Xscr\to \Rpi$ is a proper closed nonnegative convex
  function. Then for fixed $(x,\lambda)\in\dom f^\pi$,
  \[
  \partial f^\pi(x,\lambda) = \begin{cases}
    \set{(u,-f^*(u))\mid u \in \partial f(\lambda^{-1}x)} & \mbox{if $\lambda > 0$},\\
    \set{(u,\gamma)\mid (u,-\gamma)\in \epi f^*, u \in \partial \rec{f}(x)} & \mbox{if $\lambda = 0$}.
  \end{cases}
  \]
\end{proposition}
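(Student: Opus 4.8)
The plan is to read off the subdifferential from the Fenchel--Young equality once the conjugate of $f^\pi$ is known, so the first task is to compute $(f^\pi)^*$. Writing $(f^\pi)^*(u,\eta) = \sup_{x,\lambda}\set{\ip{u}{x} + \eta\lambda - f^\pi(x,\lambda)}$ and splitting the supremum over $\lambda>0$, $\lambda=0$, and $\lambda<0$, the last region contributes nothing since $f^\pi=+\infty$ there. On the region $\lambda>0$ the substitution $x=\lambda z$ turns the supremum into $\sup_{\lambda>0}\lambda\sup_z\set{\ip{u}{z}-f(z)+\eta} = \sup_{\lambda>0}\lambda(f^*(u)+\eta)$, which equals $0$ when $\eta+f^*(u)\le 0$ and $+\infty$ otherwise. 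On the region $\lambda=0$ the contribution is $\sup_x\set{\ip{u}{x}-\rec f(x)}$; here I would invoke the identity $\delta^*_{\dom (f^*)}=\rec{(f^{**})}=\rec f$ (\citet[Theorem~2.5.4(a)]{AuT03}, using $f^{**}=f$ since $f$ is closed) to recognize $\rec f$ as a support function, so that this contribution is $\delta_{\cl\dom f^*}(u)$. Because $\eta+f^*(u)\le0$ already forces $u\in\dom f^*\subseteq\cl\dom f^*$, the two constraints combine and yield
\[
  (f^\pi)^*(u,\eta)=\delta_C(u,\eta),\qquad C:=\set{(u,\eta)\mid(u,-\eta)\in\epi f^*}.
\]

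With the conjugate in hand, and using that $f^\pi$ is proper closed convex (as recorded earlier in the paper), the inclusion $(u,\eta)\in\partial f^\pi(x,\lambda)$ is equivalent by Fenchel--Young to the two conditions $(u,-\eta)\in\epi f^*$ and $f^\pi(x,\lambda)=\ip{u}{x}+\eta\lambda$. It then remains to unpack these in each case. For $\lambda>0$ the equality reads $\lambda f(\lambda^{-1}x)=\ip{u}{x}+\eta\lambda$; dividing by $\lambda$ gives $f(\lambda^{-1}x)=\ip{u}{\lambda^{-1}x}+\eta$, while $(u,-\eta)\in\epi f^*$ means $\eta\le-f^*(u)$. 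Since the Fenchel--Young inequality forces $f(\lambda^{-1}x)+f^*(u)\ge\ip{u}{\lambda^{-1}x}$, i.e.\ $\eta\ge-f^*(u)$, I conclude $\eta=-f^*(u)$ together with the Fenchel--Young equality $u\in\partial f(\lambda^{-1}x)$, which is exactly the first branch.

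For $\lambda=0$ the equality becomes $\rec f(x)=\ip{u}{x}$, to be combined with $(u,-\eta)\in\epi f^*$, and the remaining work is to identify this pair of conditions with ``$(u,-\eta)\in\epi f^*$ and $u\in\partial\rec f(x)$.'' I expect this to be the one genuinely delicate point. Using again that $\rec f=\delta^*_{\dom f^*}$ is a support function, the forward direction holds because a point $u\in\dom f^*$ attaining $\ip{u}{x}=\delta^*_{\dom f^*}(x)$ is precisely a subgradient of that support function at $x$; here the constraint $(u,-\eta)\in\epi f^*$ is what keeps $u$ inside $\dom f^*$ and thereby sidesteps the usual closure subtlety in $\partial\delta^*_{\dom f^*}$. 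Conversely, since $\rec f$ is sublinear, any $u\in\partial\rec f(x)$ automatically satisfies $\ip{u}{x}=\rec f(x)$ by the Euler relation for positively homogeneous convex functions (obtained by testing the subgradient inequality at $0$ and at $2x$). This yields the second branch and completes the argument.

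The only place demanding care is the $\lambda=0$ analysis, where closure issues in the subdifferential of a support function must be controlled; the plan above resolves them by carrying the epigraphical constraint $(u,-\eta)\in\epi f^*$ alongside the support-value condition throughout, rather than appealing to $\partial\rec f$ in isolation.
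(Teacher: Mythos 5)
Your proof is correct, and it is worth noting that the paper itself does not prove this proposition at all: it defers entirely to \citet[Proposition~2.3(v)]{Com18} and \citet[Lemma~3.8]{ABDFM18}. Your argument therefore supplies a self-contained derivation of what the paper outsources. The route you take---compute $(f^\pi)^*=\delta_C$ with $C=\set{(u,\eta)\mid f^*(u)+\eta\le 0}$ by splitting the supremum over $\lambda>0$ and $\lambda=0$, using $\rec{f}=\delta^*_{\dom f^*}$ (i.e.\ \citet[Theorem~2.5.4(a)]{AuT03} applied to $f^*$, with $f^{**}=f$ by closedness) to handle the $\lambda=0$ slice, then translate Fenchel--Young equality into the two branches---is the natural conjugacy argument, and each step checks out. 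For $\lambda>0$, the equality $f(\lambda^{-1}x)=\ip{u}{\lambda^{-1}x}+\eta$ gives $\eta\ge -f^*(u)$ by Fenchel--Young while the constraint $(u,-\eta)\in\epi f^*$ gives $\eta\le -f^*(u)$, so the pinching correctly forces $\eta=-f^*(u)$ and $u\in\partial f(\lambda^{-1}x)$. For $\lambda=0$, your two-sided equivalence is exactly the right way to control the closure subtlety: attainment $\ip{u}{x}=\rec{f}(x)$ with $u\in\dom f^*$ yields $u\in\partial\rec{f}(x)$ by the support-function subgradient inequality, and conversely $u\in\partial\rec{f}(x)$ yields attainment by the Euler relation (testing at $0$ and $2x$); carrying the epigraph constraint throughout is what excludes the points of $\cl\dom f^*\setminus\dom f^*$ that could otherwise appear in $\partial\rec{f}(x)$. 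Two dividends of your approach: the explicit formula $(f^\pi)^*=\delta_C$ is a useful byproduct, and nonnegativity of $f$ is never used, so your argument proves the proposition for every proper closed convex $f$---the nonnegativity hypothesis matters only for the paper's later use of $f^\pi$ as a gauge.
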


\begin{theorem}[Minimizers of the projected polar envelope]\label{Thm1}
  Consider $f$ as in~\eqref{eq:37} and let $\alpha > 0$.
  \begin{enumerate}[{\rm (i)}]
  \item Suppose $x\in \argmin \penv$ and let
    $(\xbar,\bar \lambda) = \rpprox(x)$. Then $\xbar = x$,
    $\bar \lambda > 0$, $\bar \lambda^{-1}x\in \argmin f$.
  \item If $x\in \argmin f$, then $\lambda x\in \argmin \penv$, where
    $\lambda:=[1+\alpha f(x)]^{-1}$.
  \end{enumerate}
\end{theorem}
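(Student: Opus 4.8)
The plan is to build everything on the observation that the perspective $f^\pi$ is a closed gauge on $\Xscr\times\R$: it is positively homogeneous, closed by hypothesis, and nonnegative because $f\ge 0$ forces $\rec f\ge 0$. Thus \cref{th:polar-envelope-subdiff} applies to $\kappa=f^\pi$ evaluated at the sliced point $(x,1)$, and $\rpprox(x)$ is the unique polar proximal point there. I would drive both parts from a single explicit evaluation of $\min_w\penv(w)$.

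To compute the minimum, I would collect the two infima and write $\min_w\penv(w)=\inf_{w,z,\mu}\max\{f^\pi(z,\mu),(1/\alpha)\|(w,1)-(z,\mu)\|\}$. For fixed $(z,\mu)$ the inner minimization over $w$ is attained at $w=z$, leaving $(1/\alpha)|1-\mu|$; and $\inf_z f^\pi(z,\mu)=\mu\inf f$ for every $\mu\ge0$ (using $\rec f(0)=0$ when $\mu=0$). Writing $v:=\inf f>0$, this reduces the problem to the scalar minimization $\inf_{\mu\ge0}\max\{\mu v,(1/\alpha)|1-\mu|\}$, whose optimum is attained where the two branches balance, namely at $\mu^\ast=[1+\alpha v]^{-1}\in(0,1)$, with optimal value $m:=v/(1+\alpha v)$; note $0<m<1/\alpha$. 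Part~(ii) is then immediate: if $x\in\argmin f$, then $f(x)=v$ and $\lambda=\mu^\ast$, and substituting $(z,\mu)=(\lambda x,\lambda)$ into the definition gives $\penv(\lambda x)\le\max\{\lambda v,(1/\alpha)(1-\lambda)\}=m=\min\penv$, so $\lambda x\in\argmin\penv$.

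For part~(i), since $\min\penv=m>0$ we have $\penv(x)=m>0$, so $f^\pi_\alpha$ is differentiable at $(x,1)$ by \cref{th:polar-envelope-subdiff}(iv) and $\nabla f^\pi_\alpha(x,1)$ is a strictly positive multiple of the displacement $(x,1)-(\xbar,\bar\lambda)$. Because $x$ minimizes the convex function $\penv$, the $\Xscr$-block of this gradient, a positive multiple of $x-\xbar$, must vanish, forcing $\xbar=x$. With the displacement now equal to $(0,1-\bar\lambda)$, the inequality $\ip{(x,1)}{(0,1-\bar\lambda)}>0$ from \cref{th:polar-envelope-subdiff}(iv) gives $\bar\lambda<1$, and \cref{th:polar-envelope-subdiff}(ii) yields $\penv(x)=(1/\alpha)(1-\bar\lambda)$; equating this with $m$ pins down $\bar\lambda=[1+\alpha v]^{-1}>0$, which in particular excludes the degenerate recession value $\bar\lambda=0$. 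Finally $m=\penv(x)\ge f^\pi(x,\bar\lambda)=\bar\lambda f(\bar\lambda^{-1}x)\ge\bar\lambda v=m$ must hold with equality, so $f(\bar\lambda^{-1}x)=v=\inf f$, i.e.\ $\bar\lambda^{-1}x\in\argmin f$. I expect the main obstacle to be the careful bookkeeping in the reduction to the scalar problem—justifying the interchange of infima and the inner step $w=z$—together with correctly reading \cref{th:polar-envelope-subdiff}(iv) as a statement about the sliced gradient $\nabla\penv$; ruling out $\bar\lambda=0$ then rests squarely on the strict bound $m<1/\alpha$ coming from the scalar computation.
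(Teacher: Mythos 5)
Your proof is correct, but it takes a genuinely different route from the paper's. The paper's argument is entirely subdifferential-based: it invokes the Seeger--Volle formula (\cref{Prop_sub}) for the subdifferential of the max convolution, an auxiliary lemma describing $\normal(\cdot\mid\dom f^\pi)$, and the perspective-subdifferential formula (\cref{subdiff_pers}), then works through a case analysis (the value $f^\pi(\xbar,\bar\lambda)$ strictly below versus equal to the norm term, with separate contradictions ruling out $\mu=0$, $\mu=1$, $\bar\lambda=1$, and $\bar\lambda=0$); its part (ii) is proved by explicitly exhibiting a zero subgradient of $\penv$ at $\lambda x$. You instead argue by value: reducing $\inf\penv$ to the scalar problem $\inf_{\mu\ge 0}\max\{\mu v,\,(1/\alpha)|1-\mu|\}$ with $v=\inf f$ gives the closed form $\inf\penv = v/(1+\alpha v)=:m\in(0,1/\alpha)$, after which part (ii) is a one-line substitution and part (i) follows from \cref{th:polar-envelope-subdiff}(ii)--(iv) alone, via the chain rule through the affine slice $w\mapsto(w,1)$ and Fermat's rule. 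Your route is shorter, avoids three pieces of machinery, and yields strictly more information: it identifies the optimal value of $\penv$, pins down $\bar\lambda=[1+\alpha\inf f]^{-1}$ exactly, and never uses $\argmin f\neq\emptyset$ in part (i). What the paper's heavier machinery buys is reusability: the same subdifferential tools drive \cref{Thm2}, where one starts from a fixed point $(x,\lambda)=\rpprox(x)$ with no minimality hypothesis on $\penv$, so a global value-comparison argument like yours has no obvious foothold there. Two routine details you should make explicit when writing this up: in the scalar problem, any $\mu>1$ gives value at least $\mu v> v\ge m$, so the balancing point in $(0,1)$ is indeed the global minimizer; and the displacement $(x,1)-(\xbar,\bar\lambda)$ is nonzero, since \cref{th:polar-envelope-subdiff}(ii) forces $\|(x,1)-(\xbar,\bar\lambda)\|=\alpha\penv(x)=\alpha m>0$, which is what makes ``a strictly positive multiple of the displacement'' meaningful.
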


\begin{proof}
  We first derive a formula for the subdifferential of $\penv$. To
  this end, define
\begin{equation}\label{f1f2}
  f_1 =(1/\alpha)\|\cdot\|\ \quad\mbox{and}\quad \ f_2 = f^\pi.
\end{equation}
Then $f_1\pconv f_2$ is continuous thanks to \cref{cor:bounded}, and $\penv(x) = (f_1\pconv f_2)(x,1)$. We thus obtain the following formula for the subdifferential:
  \begin{equation}\label{sub_formula2}
  \partial \penv(x)= \set{u\mid \exists \beta\ \ {\rm s.t.}\ \ (u,\beta)\in \partial(f_1\pconv f_2)(x,1)};
  \end{equation}
here, $\partial(f_1\pconv f_2)(x,1)$ is given by \cref{Prop_sub} as
  \begin{equation}\label{sub_formula}
  \begin{aligned}
  \partial (f_1\pconv f_2)(x,1) = \bigcup_{\mu\in \Gamma} \Set{\partial(\mu f_1)(x - \xbar,1 - \bar \lambda)\cap \partial(\lscale{[1-\mu]}f_2)(\xbar,\bar \lambda)}.
  \end{aligned}
  \end{equation}
  where $\Gamma := \set{\mu\in [0,1]\mid \mu f_1(x - \xbar,1 - \bar \lambda) + \lscale{(1-\mu)} f_2(\xbar,\bar \lambda) = (f_1\pconv f_2)(x,1)}$.

  We now prove Part (i). Suppose that $x\in \argmin \penv$ and let
  $(\xbar,\bar \lambda) = \rpprox(x)$. Then $0\in \partial
  \penv(x)$. In view of \cref{th:polar-envelope-subdiff}(ii),
  there are two cases to consider, below.

  \paragraph{Case 1} $f^\pi(\xbar,\bar\lambda) < \alpha^{-1}
                \left\|
                    (x - \xbar,1 -\bar \lambda)
                \right\| $.
  This forces $\mu = 1$ in~\eqref{sub_formula}, which together with $0\in \partial \penv(x)$ and~\eqref{sub_formula2} implies the existence of $\beta$ such that
  \[
  (0,\beta) \in \alpha^{-1}\partial \|(x-\xbar,1-\bar \lambda)\|\cap \normal((\xbar,\bar \lambda)\mid\dom f^\pi).
  \]
  In particular, we have $x = \xbar$.
  Since $\alpha^{-1} \left\|(x-\xbar,1-\bar \lambda)\right\| > 0$, we must then have $\bar\lambda\neq 1$. Thus,
  \[
  \beta = \frac{1-\bar \lambda}{\alpha|1-\bar\lambda|}.
  \]
  Combining this with~\eqref{normalcone} and $(0,\beta)\in \normal((\xbar,\bar \lambda)\mid\dom f^\pi)$ yields
  \[
  0 \ge \ip{ (y,1)}{(0,1 - \bar \lambda)} = 1 - \bar \lambda\ \
  {\rm and}\ \ \ip{(\xbar,\bar \lambda)}{(0,1 - \bar \lambda)} = 0
  \]
  for any fixed $y\in \dom f$. The first relation above together with $\bar \lambda\neq 1$ yields $\bar \lambda > 1$, and hence we conclude that $\bar \lambda = 0$ by the second relation, which is a contradiction. Consequently, this case cannot happen.

  \paragraph{Case 2} $f^\pi(\xbar,\bar\lambda) = \alpha^{-1}
                \left\|
                    (x - \xbar,1 -\bar \lambda)
                \right\| $. In this case, we see from $0\in \partial \penv(x)$, \eqref{sub_formula2} and~\eqref{sub_formula} that there exist
$\beta$ and $\mu \in [0,1]$ that satisfy
\begin{equation}\label{importantrel}
(0,\beta) \in (\mu/\alpha) \partial \|(x - \xbar,1 -\bar \lambda)\|\cap \partial (\lscale{[1 - \mu]}f^\pi)(\xbar,\bar\lambda).
\end{equation}

We claim that $\mu > 0$. Assume to the contrary that $\mu = 0$. It
then follows from~\eqref{importantrel} that $\beta = 0$ and hence
  \[
  0 \in \partial f^\pi(\xbar,\bar\lambda),
  \]
meaning that $(\xbar,\bar\lambda)$ minimizes the gauge function $f^\pi$. Thus, $f^\pi(\xbar,\bar\lambda) = 0$. Because $f^\pi(\xbar,\bar\lambda) = \alpha^{-1}\left\| (x - \xbar,1 -\bar \lambda)\right\|$, we see further that $(\xbar,\bar\lambda) = (x,1)$. Hence,
  \[
  0 = f^\pi(\xbar,\bar\lambda) = f^\pi(x,1) = f(x).
  \]
Since the optimal value of~\eqref{eq:1} is positive, we have arrived at a contradiction. Consequently, we have shown that $\mu > 0$.

  From $\mu > 0$ and~\eqref{importantrel}, we see that $(0,\beta) \in (\mu/\alpha) \partial \|(x - \xbar,1 -\bar \lambda)\|$, which readily gives $x = \xbar$.

  Next, we claim that $\bar \lambda\neq 1$. Indeed, suppose to the contrary that $\bar \lambda=1$. Then
  \[
  f(\xbar) = f^\pi(\xbar,1)= f^\pi(\xbar,\bar\lambda) = \alpha^{-1}\left\|(x - \xbar,1 -\bar \lambda)\right\| = 0
  \]
  since $x = \xbar$. Because the optimal value of~\eqref{eq:1} is positive, this is a contradiction. Consequently, we must have $\bar \lambda\neq 1$.

  Now, we claim that $\mu < 1$. Suppose to the contrary that $\mu = 1$. Then we have from~\eqref{importantrel} and~\eqref{normalcone} that
  \begin{equation}\label{eq:13}
  \begin{aligned}
  (0,\beta)& \in \partial [\lscale{0}f^\pi](\xbar,\bar\lambda) = \normal((\xbar,\bar \lambda)\mid\dom f^\pi) \\
  &= \set{w \in [\dom f\times \{1\}]^-\mid \ip{w}{(\xbar,\bar \lambda)} = 0}.
  \end{aligned}
  \end{equation}
  In addition, we have from~\eqref{importantrel} that $(0,\beta) \in (1/\alpha) \partial \|(x - \xbar,1 -\bar \lambda)\|$, which together with $x = \xbar$ and $\bar \lambda\neq 1$ gives
  \begin{equation} \label{eq:14}
  \beta = \frac{1 - \bar \lambda}{\alpha|1 - \bar \lambda|}.
  \end{equation}
  Equations~\eqref{eq:13} and~\eqref{eq:14} together imply that
  \begin{equation*}
    0\ge \ip{(y,1)}{(0,1 - \bar \lambda)} = 1 - \bar \lambda
    \quad\mbox{and}\quad
    0 = \ip{(\xbar,\bar \lambda)}{(0,1 - \bar \lambda)}
  \end{equation*}
  for any $y\in \dom f$. As in Case~1, this yields a
  contradiction. Thus, we must also have $\mu < 1$.

  To summarize, we have shown that $\mu \in (0,1)$, $x = \xbar$, and
  $\bar \lambda\neq 1$. Together with~\eqref{importantrel}, we
  conclude that
  \begin{equation}\label{rel2}
    \frac{1}{1-\mu}(0,\beta)=
    \frac{\mu}{\alpha(1-\mu)}\left(0,\frac{1 - \bar \lambda}{|1 - \bar \lambda|}\right) \in \partial f^\pi(\xbar,\bar \lambda).
  \end{equation}
  We claim that $\bar \lambda > 0$. Suppose to the contrary that $\bar \lambda = 0$. Then due to $f^\pi(\xbar,\bar\lambda) = \alpha^{-1}
                \left\|
                    (x - \xbar,1 -\bar \lambda)
                \right\| $ and $x = \xbar$, we must have
  \[
  \rec{f}(\xbar) = f^\pi(\xbar,0) = f^\pi(\xbar,\bar\lambda) = \alpha^{-1} \left\| (x - \xbar,1 -\bar \lambda)\right\| = 1/\alpha > 0.
  \]
  However, from \cref{subdiff_pers}, \eqref{rel2}, and
  $\bar \lambda = 0$, we obtain
  \[
  0 \in \partial \rec{f}(\xbar).
  \]
  This implies that $\xbar$ minimizes the gauge function $\rec{f}$, meaning that $\rec{f}(\xbar) = 0$, which is a contradiction.

  The conclusion in Part~(i) now follows immediately from~\eqref{rel2}, the
  facts that $\bar \lambda > 0$ and $\xbar = x$, and
  \cref{subdiff_pers}.

  We now prove Part~(ii). Suppose that $x\in \argmin f$. Then
  $f(x) > 0$ because the optimal value of~\eqref{eq:1} is
  positive. Let $\lambda = [1+\alpha f(x)]^{-1}\in (0,1)$ and set
  $\mu = 1-\lambda$. Then
  \[
  f(x) = \frac{1-\lambda}{\alpha\lambda}.
  \]
  Since $\lambda > 0$, we have $0 \in \partial f(\lambda x/\lambda)$, which together with \cref{subdiff_pers} gives
  \begin{equation} \label{eq:15}
  (0,f(x)) = (0,-f^*(0))\in \partial f^\pi(\lambda x,\lambda),
  \end{equation}
  where the equality follows from a direct computation and the fact
  that $f(x) = \inf f$. Using this, $\lambda\in (0,1)$ and
  $f(x) = (1-\lambda)/(\alpha\lambda)=\mu/(\alpha(1-\mu))$,
  we see further that
  \begin{equation}\label{rel1}
  (0,\alpha^{-1}\mu)\in \frac{\mu}{\alpha}\partial\|(0,1-\lambda)\|\cap (1-\mu)\partial f^\pi(\lambda x,\lambda).
  \end{equation}
  Furthermore, we have
  \[
  f^\pi(\lambda x,\lambda) = \lambda f(x) = \frac{1-\lambda}{\alpha} = \alpha^{-1}\|(0,1-\lambda)\|,
  \]
  and from~\eqref{eq:15} and the fact that $\lambda\in(0,1)$,
  \[
  0 = (0,-\alpha^{-1}\mu + (1-\mu)f(x))\in \frac{\mu}{\alpha}\partial\|(0,\lambda - 1)\| + (1-\mu)\partial f^\pi(\lambda x,\lambda),
  \]
  showing that
  \[
  \begin{aligned}
  (1-\mu)f^\pi(\lambda x,\lambda) &+ \frac\mu\alpha\|(0,1-\lambda)\| \\
  &= \inf_{\xhat,\hat \lambda}\ \maxv{f^\pi(\xhat,\widehat \lambda),\,\alpha^{-1}\|(\xhat-\lambda x,\widehat \lambda-1)\|},
  \end{aligned}
  \]
  thanks to \citet[Corollary~2.8.15]{Za02}.
  This together with~\eqref{rel1} and \cref{Prop_sub} shows that
  \[
  (0,\alpha^{-1}\mu)\in \partial (f_1\pconv f_2)(\lambda x,1)
  \]
  with $f_1$ and $f_2$ defined in~\eqref{f1f2}. Invoking~\eqref{sub_formula2}, we conclude further that $0 \in \partial \penv(\lambda x)$, meaning that $\lambda x\in \argmin \penv$.
\end{proof}

Our next theorem states that one can obtain an optimal solution of~\eqref{eq:1} by considering some ``projected'' fixed points of the projected polar proximal map. This is an analogue of the well-known fact that for a proper closed convex function $h$, one has $\argmin h = \set{x\mid \prox_{\gamma h}(x) = x}$ for any $\gamma > 0$. %

\begin{theorem}[Projected fixed points of projected polar proximal map]\label{Thm2}
Consider $f$ as in~\eqref{eq:37} and let $\alpha > 0$.
\begin{enumerate}[{\rm (i)}]
  \item If $(x,\lambda) = \rpprox(x)$,
  then $\lambda > 0$ and $\lambda^{-1}x\in \argmin f$.
  \item If $x\in \argmin f$, then there exists $\lambda > 0$ so that
  $(\tau x,\lambda)= \rpprox(\tau x)$, where $\tau:=[1+\alpha f(x)]^{-1}$.
\end{enumerate}
\end{theorem}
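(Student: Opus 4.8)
The plan is to reduce both parts to \cref{Thm1}, which already connects $\argmin\penv$ with $\argmin f$ through $\rpprox$. The conceptual bridge is that a point $x$ is a \emph{projected fixed point} of $\rpprox$---meaning the $\Xscr$-component of $\rpprox(x)$ equals $x$---exactly when $x\in\argmin\penv$. One direction of this equivalence is literally \cref{Thm1}(i); the other direction is the real work in part~(i) below.

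Part~(ii) is then essentially immediate. If $x\in\argmin f$, then \cref{Thm1}(ii) gives $\tau x\in\argmin\penv$ with $\tau=[1+\alpha f(x)]^{-1}$, which is precisely the scalar named in the statement. Applying \cref{Thm1}(i) to the minimizer $\tau x$ returns $\rpprox(\tau x)=(\tau x,\bar\lambda)$ for some $\bar\lambda>0$; setting $\lambda=\bar\lambda$ establishes the claim.

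For part~(i) I would first show that the hypothesis $(x,\lambda)=\rpprox(x)$ forces $x\in\argmin\penv$, and then invoke \cref{Thm1}(i): since $\rpprox$ is single-valued, its conclusion identifies $\bar\lambda$ with $\lambda$ and yields $\lambda>0$ and $\lambda^{-1}x\in\argmin f$. To see that $x\in\argmin\penv$, write $(x,\lambda)=\pprox_{\alpha f^\pi}(x,1)$ and use \cref{th:polar-envelope-subdiff}(ii) to get $f^\pi(x,\lambda)\le(1/\alpha)\|(x,1)-(x,\lambda)\|=(1/\alpha)|1-\lambda|$, so $\penv(x)=(1/\alpha)|1-\lambda|$. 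The degenerate case $\lambda=1$ is excluded because it would make $(x,1)$ its own polar proximal point, forcing $f^\pi(x,1)=f(x)=0$ against $\inf f>0$; hence $\penv(x)>0$ and, by the same inequality, $f^\pi(x,\lambda)=(1/\alpha)|1-\lambda|=\penv(x)$. With both terms active and equal, $(x,\lambda)$ is the exact decomposition point of $(f_1\pconv f_2)(x,1)$ for $f_1,f_2$ as in~\eqref{f1f2}, and I would translate the optimality condition defining $\pprox_{\alpha f^\pi}(x,1)$ into the subdifferential of $\penv$. Because $\partial(\lscale{\mu}{f_1})(0,1-\lambda)\subseteq\{0\}\times\R$ (the $2$-norm being differentiable at the nonzero point $(0,1-\lambda)$), that optimality condition supplies some $\beta$ and $\mu\in\Gamma$ with $(0,\beta)\in\partial(\lscale{\mu}{f_1})(0,1-\lambda)\cap\partial(\lscale{[1-\mu]}{f^\pi})(x,\lambda)$, i.e.\ $(0,\beta)\in\partial(f_1\pconv f_2)(x,1)$; by~\eqref{sub_formula2} this is exactly $0\in\partial\penv(x)$.

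The main obstacle is this last translation: matching, multiplier for multiplier, the first-order optimality condition for the two-variable proximal problem at $(x,\lambda)$ with the subdifferential formula~\eqref{sub_formula} for the one-variable envelope, and confirming that the relevant $\mu$ lies in $\Gamma$ while excluding the boundary values $\mu\in\{0,1\}$. As in the proof of \cref{Thm1}, each boundary value collapses to either $0\in\partial f^\pi(x,\lambda)$ or a normal-cone relation that is incompatible with $\penv(x)>0$, so I expect the case analysis to reuse exactly those arguments once the correspondence between the two problems' multipliers is set up.
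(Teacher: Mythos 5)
Your part (ii) is exactly the paper's argument (apply \cref{Thm1}(ii), then \cref{Thm1}(i) at the minimizer $\tau x$, and use single-valuedness of $\rpprox$), so nothing to add there. Your part (i), however, takes a genuinely different route. The paper never passes through $\argmin\penv$: it works directly with the optimality condition of the two-variable problem defining $\rpprox(x)$, splits into the two cases allowed by \cref{th:polar-envelope-subdiff}(ii), kills the strict-inequality case with the normal-cone identity \eqref{normalcone}, and in the equality case excludes $\mu=0$ and $\lambda=0$ before reading off $0\in\partial f(\lambda^{-1}x)$ from \cref{subdiff_pers}. Your reduction to \cref{Thm1}(i) avoids duplicating that case analysis (the paper essentially runs it twice, once in each theorem), and it is not circular since \cref{Thm1} is proved independently of \cref{Thm2}. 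The translation you flag as the main obstacle does go through: because the norm term is composed with the reflection $(z,\nu)\mapsto(x,1)-(z,\nu)$, the max-rule optimality condition of \citet[Corollary~2.8.15]{Za02} at $(x,\lambda)$ is \emph{literally} the statement that $\partial(\lscale{\mu'}{f_1})(0,1-\lambda)\cap\partial(\lscale{[1-\mu']}{f^\pi})(x,\lambda)\neq\emptyset$ for some $\mu'$ whose activity condition is exactly membership in $\Gamma$; since $\lambda\neq1$ (which you proved), every element of $\partial(\lscale{\mu'}{f_1})(0,1-\lambda)$ has the form $(0,\beta)$, so \eqref{sub_formula} and the easy direction of \eqref{sub_formula2} yield $0\in\partial\penv(x)$. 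Note also that you do \emph{not} need to exclude $\mu'\in\{0,1\}$; that exclusion matters only in the paper's direct derivation, not in your reduction.

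One step is wrong as justified, though: the claim that ``by the same inequality'' $f^\pi(x,\lambda)=(1/\alpha)|1-\lambda|$. \cref{th:polar-envelope-subdiff}(ii) gives equality only for \emph{continuous} gauges, and $f^\pi$ is not continuous (its effective domain is in general a proper cone in $\Xscr\times\R$). Ruling out the strict inequality $f^\pi(x,\lambda)<(1/\alpha)|1-\lambda|$ is precisely the paper's ``Case 1'' argument, which requires the normal-cone formula \eqref{normalcone} and a contradiction, not a one-line appeal to part (ii). Fortunately your proof does not need this equality at all: \cref{Prop_sub} requires only that the infimum defining $(f_1\pconv f_2)(x,1)$ be attained at the decomposition $(x,1)=(0,1-\lambda)+(x,\lambda)$, which is automatic because $(x,\lambda)=\rpprox(x)$ is the minimizer. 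In the strict-inequality case one simply gets $\Gamma=\{1\}$, the optimality condition places $(0,\mathrm{sign}(1-\lambda)/\alpha)$ in the normal cone $\normal((x,\lambda)\mid\dom f^\pi)$, and the displayed intersection is still nonempty, so $0\in\partial\penv(x)$ follows just the same. Delete the equality claim (or prove it the paper's way) and your argument is complete.
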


\begin{proof}
  We first prove Part~(i). Suppose that $(x,\lambda) =
  \rpprox(x)$. According to
  \cref{th:polar-envelope-subdiff}(ii), there are two cases to
  consider.

  \paragraph{Case 1} $f^\pi(x,\lambda) < \alpha^{-1}\|(x,\lambda) - (x,1)\|$. In particular, $\lambda \neq 1$.
  Using this, \citet[Corollary~2.8.15]{Za02} and the fact that $(x,\lambda)$ is a minimizer, we see that
  \[
  \begin{aligned}
  0 &\in \alpha^{-1}\partial \|\cdot - (x, 1)\|(x,\lambda) + \normal((x,\lambda)\mid\dom f^\pi)\\
  &= \frac1{\alpha|1 - \lambda|}(0,\lambda - 1) + \normal((x,\lambda)\mid\dom f^\pi).
  \end{aligned}
\]
We use~\eqref{normalcone} to obtain the equivalent expression
\[
  \frac1{\alpha|1 - \lambda|}(0,1 - \lambda)\in \set{w \in [\dom f\times \{1\}]^-\mid \ip{ w}{(x,\lambda)} = 0}.
  \]
   Thus, for any $y\in \dom f$, we have
  \[
    0 \ge \ip{(y,1)}{(0,1 - \lambda)}
    \quad\mbox{and}\quad
    0 = \ip{(x,\lambda)}{(0,1 - \lambda)}.
  \]
  Since $\lambda \neq 1$, the first inequality above gives $\lambda > 1$, which, together with the second relation above gives $\lambda = 0$, leading to a contradiction. Thus, this case cannot happen.

  \paragraph{Case 2} $f^\pi(x,\lambda) = \alpha^{-1}\|(x,\lambda) - (x,1)\|$. We first claim that $\lambda \neq 1$. Suppose to the contrary that $\lambda = 1$. Then $f^\pi(x,1) = \alpha^{-1}\|(x,1) - (x,1)\| = 0$. From this and the definition of $f^\pi$, we then have $f(x) = 0$, which is impossible because the optimal value of~\eqref{eq:1} is positive. Thus, $\lambda \neq 1$.

  Now, using this, \citet[Corollary~2.8.15]{Za02}, and the fact that
  $(x,\lambda)$ is a minimizer, we have
  \[
  0 = u + v
  \]
  for some $u \in \partial (\lscale{\mu} f^\pi)(x,\lambda)$,
  $v=\frac{1-\mu}{\alpha|1-\lambda|}(0,\lambda - 1)$, and
  $\mu \in [0,1]$.

  We claim that $\mu > 0$. Suppose to the contrary that $\mu = 0$. Then we have
  \[
    0 \in \frac{1}{\alpha|1-\lambda|}(0,\lambda - 1) + \normal((x,\lambda)\mid\dom f^\pi).
  \]
  A contradiction can be derived exactly as in Case 1. Thus, $\mu > 0$.

  Next, we claim that $\lambda > 0$. Suppose to the contrary that
  $\lambda = 0$. Because $\mu > 0$,
  \[
  \mu^{-1}u\in \partial f^\pi(x,0).
  \]
  On the other hand, we also have from $u + v = 0$ that
  \[
  u = - v = \frac{1-\mu}{\alpha|1-\lambda|}(0,1 - \lambda) = \frac{1-\mu}{\alpha}(0,1).
  \]
  Combining the previous two displays with
  \cref{subdiff_pers}, we conclude that
  $0 \in \partial \rec{f}(x)$. This implies that $x$ minimizes the
  gauge function $\rec{f}$, meaning that
  \[
  0 = \rec{f}(x) = f^\pi(x,0) = \alpha^{-1}\|(x,0) - (x,1)\| = \alpha^{-1} > 0,
  \]
  which is a contradiction. Thus, $\lambda > 0$.

  Now, with $\mu > 0$, we have from
  $u \in \partial (\lscale{\mu} f^\pi)(x,\lambda)$ and $u + v = 0$
  that
  \[
    \frac{u}{\mu}\in \partial f^\pi(x,\lambda)
    \quad\mbox{and}\quad
    u = - v = \frac{1-\mu}{\alpha|1-\lambda|}(0,1 - \lambda).
  \]
  Using this last display, $\lambda > 0$, and
  \cref{subdiff_pers}, we conclude that
  $0 \in \partial f(\lambda^{-1}x)$, as desired. This proves Part~(i).

  We now prove Part~(ii). Suppose that $x\in \argmin f$. By
  \cref{Thm1}(ii), we have $\tau x\in \argmin \penv$, where
  $\tau = [1+\alpha f(x)]^{-1}$. Now let
  $(\tilde x,\lambda)= \rpprox(\tau x)$. Then \cref{Thm1}(i)
  gives $\tilde x = \tau x$ and $\lambda > 0$.
\end{proof}

\subsection{Projected polar proximal-point algorithm and its variants}

We are motivated by the optimality conditions in \cref{Thm2} to
consider a fixed-point iteration for solving~\eqref{eq:1}. This
iteration mirrors the well-known proximal-point algorithm
\citep{martinet:1970}, which is a fixed-point iteration based on the
(usual) proximal map. We call our algorithm the {\em projected polar
  proximal-point algorithm}.

\begin{mdframed}
{\bf Projected polar proximal-point algorithm (P$^4$A)}: Fix any $\alpha > 0$, start with any $x_0$ and update
\begin{equation*}
(x_{k+1},\lambda_{k+1}) = \rpprox(x_k).
\end{equation*}
\end{mdframed}

Let $\set{(x_k,\lambda_k)}$ be a sequence generated by {\bf
  P$^4$A}. If $\|x_{k+1}-x_k\|\to 0$ and if $(x_*,\lambda_*)$ is an
accumulation point of $\{(x_k,\lambda_k)\}$, then it is routine to
show that
\begin{equation*}%
(x_*,\lambda_*) = \rpprox(x_*),
\end{equation*}
which according to \cref{Thm2} implies that $\lambda_* > 0$ and
$\lambda_*^{-1}x_*\in \argmin f$. In the next theorem, we will show
that $\|x_{k+1}-x_k\|\to 0$ under an additional assumption.

\begin{theorem}[Convergence of {\bf P$^4$A}]\label{Thm:P4A}
  Consider $f$ as in~\eqref{eq:37} and let $\alpha > 0$. Let $\{(x_k,\lambda_k)\}$ be generated by {\bf P$^4$A}. Then $\penv(x_{k+1})\le \penv(x_{k})$ for all $k$. If in addition there exists $\gamma > 0$ so that the function $(x,\lambda)\mapsto [f^\pi(x,\lambda)]^2 - (\gamma/2)\|x\|^2$ is convex, then $\|x_{k+1}-x_k\|\to 0$.
\end{theorem}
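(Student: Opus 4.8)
The plan is to establish the two assertions in turn: the monotone decrease $\penv(x_{k+1})\le\penv(x_k)$, which needs no extra hypothesis, and then $\|x_{k+1}-x_k\|\to 0$ under the partial strong convexity of $[f^\pi]^2$. Throughout I write $p_k:=(x_{k+1},\lambda_{k+1})=\rpprox(x_k)=\pprox_{\alpha f^\pi}(x_k,1)$ and $r_k:=\penv(x_k)$, and for a vector $v$ in $\Xscr\times\R$ I write $v_x$ for its $\Xscr$-block.

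For the monotonicity I would bound $\penv(x_{k+1})$ from above by feeding the single candidate $p_k$ into the infimum that defines $\penv(x_{k+1})=(f^\pi\pconv(1/\alpha)\|\cdot\|)(x_{k+1},1)$. Since $p_k$ and $(x_{k+1},1)$ share the same $\Xscr$-block, $\|(x_{k+1},1)-p_k\|=|1-\lambda_{k+1}|$, which is at most $\|(x_k,1)-p_k\|$ because a single coordinate is bounded by the full norm. As $r_k=\max\{f^\pi(p_k),\,\alpha^{-1}\|(x_k,1)-p_k\|\}$, both $f^\pi(p_k)$ and $\alpha^{-1}\|(x_k,1)-p_k\|$ are at most $r_k$; hence both entries of the max evaluated at $p_k$ are $\le r_k$, giving $\penv(x_{k+1})\le r_k=\penv(x_k)$.

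For the step estimate, $\{r_k\}$ is nonincreasing by the above and nonnegative because $f^\pi\pconv(1/\alpha)\|\cdot\|$ is a gauge (\cref{cor:bounded}); hence it converges and $\sum_k(r_k^2-r_{k+1}^2)<\infty$. The target is a sufficient-decrease inequality $r_k^2-r_{k+1}^2\ge c\|x_{k+1}-x_k\|^2$ for some $c=c(\gamma,\alpha)>0$, after which summability forces $\|x_{k+1}-x_k\|\to 0$. Two exact relations drive this. By \cref{th:polar-envelope-subdiff}(ii) the norm term dominates at the minimizer, so $\alpha r_k=\|(x_k,1)-p_k\|$ and therefore $\alpha^2r_k^2=\|x_{k+1}-x_k\|^2+(1-\lambda_{k+1})^2$; and, in the generic case, the computation of \cref{sec:computing_polar_env} (cf.~\cref{property_2}) identifies $p_k=\proj_{[f^\pi\le r_k]}((x_k,1))$ with $f^\pi(p_k)=r_k$. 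The key structural fact is that the outward normal $(x_k,1)-p_k$ has $\Xscr$-block $x_k-x_{k+1}$, so the step length is exactly the size of the $x$-part of this normal, and the hypothesis makes $G:=[f^\pi]^2$ strongly convex in the $x$-block, i.e. the level sets $L_k:=[f^\pi\le r_k]$ are uniformly convex in the $\Xscr$-directions.

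To extract the decrease I would project the \emph{shifted} source onto the current level set, $p_k'':=\proj_{L_k}((x_{k+1},1))$, and test $\penv(x_{k+1})$ at the midpoint $m:=\tfrac12(p_k+p_k'')$. Because $p_k\in L_k$, one has $\|(x_{k+1},1)-p_k''\|\le\|(x_{k+1},1)-p_k\|=|1-\lambda_{k+1}|$, so the triangle inequality keeps the norm term at $m$ bounded by $\alpha^{-1}|1-\lambda_{k+1}|=\sqrt{r_k^2-\alpha^{-2}\|x_{k+1}-x_k\|^2}$, while the three-point (midpoint) form of strong convexity lowers the function term to $G(m)\le r_k^2-\tfrac{\gamma}{8}\|(p_k)_x-(p_k'')_x\|^2$. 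Combining gives $r_{k+1}^2\le r_k^2-\min\{\tfrac{\gamma}{8}\|(p_k)_x-(p_k'')_x\|^2,\ \alpha^{-2}\|x_{k+1}-x_k\|^2\}$. The main obstacle is the final link: lower-bounding the separation $\|(p_k)_x-(p_k'')_x\|$ by a constant times the step $\|x_{k+1}-x_k\|$. This is a quantitative sensitivity statement for projection onto a set that is uniformly convex only in $x$; the two sources $(x_k,1)$ and $(x_{k+1},1)$ differ precisely in the $\Xscr$-block by the step, and the normal-cone computation shows this displacement is \emph{not} aligned with the normal $(x_k,1)-p_k$ once the step is nonzero, so on the curved boundary the projection must move tangentially by a comparable amount, with $\gamma$ controlling the rate. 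I expect this to be where the work concentrates, since the max (rather than sum) structure and the nonsmoothness of $G$ preclude any descent lemma, forcing the strong convexity to be imported solely through such a uniform-convexity/three-point estimate. With $\|(p_k)_x-(p_k'')_x\|\ge c'\|x_{k+1}-x_k\|$ in hand, the displayed bound yields $r_k^2-r_{k+1}^2\ge c\|x_{k+1}-x_k\|^2$, and summing against the convergence of $\{r_k\}$ concludes that $\|x_{k+1}-x_k\|\to 0$.
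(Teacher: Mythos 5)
Your first claim and its proof are correct and coincide with the paper's argument for the monotonicity part: feed the previous output $(x_{k+1},\lambda_{k+1})$ into the subproblem at $x_{k+1}$ as a candidate, and use $|1-\lambda_{k+1}|\le\|(x_{k+1}-x_k,\lambda_{k+1}-1)\|$.

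The second part has a genuine gap, and you have located it yourself: the tangential sensitivity bound $\|(p_k)_x-(p_k'')_x\|\ge c'\|x_{k+1}-x_k\|$ is never proved, and it is not a routine technicality that can be expected to go through. The hypothesis makes $[f^\pi]^2$ strongly convex along the $\Xscr$-block \emph{where $f^\pi$ is finite}, but it says nothing about the geometry of the boundary of $L_k=[f^\pi\le r_k]$ where the constraint $\dom f^\pi$ is active: that boundary can have kinks (e.g., $f=\kappa+\delta_{\Cscr}$ with $\kappa^2$ strongly convex and $\Cscr$ a polytope, which is exactly the class of examples in the paper's remark following the theorem), and at a kink the normal cone is fat enough to contain both $(x_k-x_{k+1},\,1-\lambda_{k+1})$ and $(0,\,1-\lambda_{k+1})$ simultaneously. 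In that configuration $(x_k,1)$ and $(x_{k+1},1)$ project onto the same point of $L_k$, so $p_k''=p_k$, your midpoint is $m=p_k$, and your displayed estimate degenerates to $r_{k+1}^2\le r_k^2$: no decrease is extracted even though $x_{k+1}\neq x_k$. A secondary unhandled issue is the non-generic case you set aside: when $f^\pi(p_k)<r_k$ (the domain-projection case of \cref{sec:computing_polar_env}, which is possible because $f^\pi$ need not be continuous), the identification $p_k=\proj_{L_k}(x_k,1)$ underlying your whole construction is unavailable, since \cref{property_2} requires continuity.

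The paper reaches the sufficient-decrease inequality you were aiming for, but imports the hypothesis through the \emph{subproblem objective} rather than through level-set geometry, which avoids projections entirely. Set $\tau=\min\{\gamma,\alpha^{-2}\}$. Then both $[f^\pi(x,\lambda)]^2-(\tau/2)\|x-x_{k+2}\|^2$ and $\alpha^{-2}\|(x-x_{k+1},\lambda-1)\|^2-(\tau/2)\|x-x_{k+2}\|^2$ are convex, hence so is
\[
G_k(x,\lambda):=\max\left\{[f^\pi(x,\lambda)]^2,\ \alpha^{-2}\|(x-x_{k+1},\lambda-1)\|^2\right\}-(\tau/2)\|x-x_{k+2}\|^2 .
\]
The point $(x_{k+2},\lambda_{k+2})$ minimizes the max term (squaring does not change the minimizer), and the subtracted quadratic has zero gradient at $x=x_{k+2}$, so first-order optimality shows $(x_{k+2},\lambda_{k+2})$ also minimizes the convex function $G_k$; evaluating $G_k$ at $(x_{k+1},\lambda_{k+1})$ and chaining with the monotonicity computation gives
\[
\penv^2(x_{k+1})\le\penv^2(x_k)-(\tau/2)\|x_{k+2}-x_{k+1}\|^2 ,
\]
which is exactly your target inequality with $c=\tau/2$, valid in all cases; telescoping then yields $\|x_{k+1}-x_k\|\to 0$. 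To salvage your route you would need the projection-sensitivity bound to hold along the actual iterates, which, given the kink phenomenon above, would require assumptions beyond those of the theorem.
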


\begin{remark}[Comments on the convexity condition]
  We give a simple sufficient condition for the convexity condition
  used in the hypothesis of \cref{Thm:P4A}. Let $\Cscr$ be a closed
  convex set that does not contain the origin and $\rho$ be a closed
  gauge. For any fixed $\epsilon > 0$, consider the function
  $\kappa(x) := \sqrt{\rho^2(x) + \epsilon\|x\|^2}$, which is a
  perturbation of the gauge $\rho$. Then $\kappa$ is a gauge function
  \citep[Corollary~15.3.1]{roc70} and $\kappa^2$ is strongly convex,
  say, with modulus $\gamma > 0$. If we set
  $f(x) := \kappa(x) + \delta\Cs(x)$, then
  \[
  f^\pi(x,\lambda) = \begin{cases}
    \kappa(x) + \delta_{\lambda\Cscr}(x) &\mbox{if $\lambda > 0$,}\\
    \kappa(x) + \delta_{\Cscr_\infty}(x) & \mbox{if $\lambda = 0$.}
  \end{cases}
  \]
  It is routine to show that $(x,\lambda)\mapsto [f^\pi(x,\lambda)]^2 - (\gamma/2)\|x\|^2$ is convex.
\end{remark}

\begin{proof}
  Let $(x_{k+2},\lambda_{k+2})=\rpprox(x_{k+1})$ and $(x_{k+1},\lambda_{k+1})=\rpprox(x_k)$. Then,
  \begin{equation}\label{weirdrel}
  \begin{aligned}
    &\penv(x_{k+1})  \overset{\rm(a)}= \max\set{f^\pi(x_{k+2},\lambda_{k+2}),\,\alpha^{-1}\|(x_{k+2}-x_{k+1},\lambda_{k+2}-1)\|} \\
    &\overset{\rm(b)}\le \max\big\{f^\pi(x_{k+1},\lambda_{k+1}),\,\alpha^{-1}\|(x_{k+1}-x_{k+1},\lambda_{k+1}-1)\|\big\} \\
    &= \max\big\{f^\pi(x_{k+1},\lambda_{k+1}),\,\alpha^{-1}|\lambda_{k+1}-1|\big\}\\
    &\le \max\big\{f^\pi(x_{k+1},\lambda_{k+1}),\,\alpha^{-1}\|(x_{k+1}-x_{k},\lambda_{k+1}-1)\|\big\}
    \overset{\rm(c)}= \penv(x_{k}),
  \end{aligned}
  \end{equation}
  where (a) and (c) follow from the definition of $\penv$, and (b)
  follows from the definition of $(x_{k+2},\lambda_{k+2})$ as a
  minimizer, so that the function value at $(x_{k+2},\lambda_{k+2})$
  is less than that at $(x_{k+1},\lambda_{k+1})$. This proves that
  $\penv(x_{k+1})\le\penv(x_k)$, as required.

  Now suppose in addition that
  $(x,\lambda)\mapsto [f^\pi(x,\lambda)]^2 - (\gamma/2)\|x\|^2$ is
  convex. Let $\tau = \min\set{\gamma,\alpha^{-2}}$. Then for each
  $k$, the function
  \[
  [f^\pi(x,\lambda)]^2 - (\tau/2)\|x - x_k\|^2
  \]
  is convex. Thus, the function
  \[
  G_k(x,\lambda) := \max\left\{[f^\pi(x,\lambda)]^2,\alpha^{-2}\|(x-x_{k+1},\lambda-1)\|^2\right\}- (\tau/2)\|x - x_{k+2}\|^2
  \]
  is convex. Since $(x_{k+2},\lambda_{k+2})$ minimizes the convex function
  \[
  (x,\lambda)\mapsto \max\big\{[f^\pi(x,\lambda)]^2,\alpha^{-2}\|(x-x_{k+1},\lambda-1)\|^2\big\},
  \]
  the first-order optimality condition implies that
  $(x_{k+2},\lambda_{k+2})$ also minimizes $G_k$. Thus,
  \[
  \begin{aligned}
   &\penv^2(x_{k+1}) = \max\set{[f^\pi(x_{k+2},\lambda_{k+2})]^2,\alpha^{-2}\|(x_{k+2}-x_{k+1},\lambda_{k+2}-1)\|^2}\\
   & = G_k(x_{k+2},\lambda_{k+2})\le G_k(x_{k+1},\lambda_{k+1})\\
   &= \max\big\{[f^\pi(x_{k+1},\lambda_{k+1})]^2,\alpha^{-2}\|(x_{k+1}-x_{k+1},\lambda_{k+1}-1)\|^2\big\} - (\tau/2)\|x_{k+2} - x_{k+1}\|^2\\
   & \le \penv^2(x_{k})- (\tau/2)\|x_{k+2} - x_{k+1}\|^2,
  \end{aligned}
  \]
  where the last inequality follows from~\eqref{weirdrel}. Rearranging terms in the above inequality and summing from $k=0$ to $\infty$, we obtain
  \[
  \begin{aligned}
  \sum_{k=0}^\infty \frac\tau2\|x_{k+2} - x_{k+1}\|^2 &\le \sum_{k=0}^\infty(\penv^2(x_{k}) - \penv^2(x_{k+1}))\\
  &\le \penv^2(x_0) < \infty,
  \end{aligned}
  \]
  which implies that $\|x_{k+1}-x_k\|\to 0$, as desired.
\end{proof}

We next describe another natural algorithm for solving~\eqref{eq:1},
motivated by \cref{Thm1} instead. This theorem asserts that we
can recover a solution of~\eqref{eq:1} by computing $\rpprox$ at a
minimizer of $\penv$. Thus, in order to solve~\eqref{eq:1}, one can
just minimize the projected polar envelope $\penv$. Specifically, we
can apply a variant of the steepest-descent algorithm with line search
to minimize $\penv$. Such an approach can be understood as a
proximal-point-like algorithm in the same manner that the classical
proximal-point algorithm can be regarded as a steepest-descent
algorithm applied to the Moreau envelope.

\begin{mdframed}
{\bf Projected polar envelope minimization algorithm (EMA)}: Start with
any $x_0$ and $\sigma\in (0,1)$. For each $k$, pick
$\beta_k > 0$ (so that $0<\inf \beta_k \le \sup \beta_k < \infty$) and perform the following iteration:
\begin{enumerate}
  \item Find the smallest integer $t\ge 0$ so that
    \[\penv(x_k - 2^{-t} \beta_k\nabla \penv(x_k)) \le \penv(x_k) - \sigma 2^{-t} \beta_k\|\nabla \penv(x_k)\|^2.\]
  \item Set $x_{k+1} = x_k - 2^{-t}\beta_k\nabla \penv(x_k)$.
\end{enumerate}
\end{mdframed}

We have the following convergence result concerning {\bf EMA}.

\begin{theorem}[Convergence of {\bf EMA}]
  Consider $f$ as in~\eqref{eq:37} and let $\alpha > 0$. Let $\set{x_k}$ be generated by {\bf EMA}. Then $\set{x_k}$ converges to a global minimizer of $\penv$.
\end{theorem}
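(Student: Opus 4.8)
The plan is to recognize {\bf EMA} as steepest descent with an Armijo (backtracking) line search applied to the convex, continuously differentiable function $\penv$, and to run the standard descent analysis, with the one genuine difficulty---that $\penv$ need not have bounded sublevel sets---circumvented by a quasi-Fej\'er argument rather than by coercivity.

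First I would assemble the properties of $\penv$ that the analysis needs. Since $\inf f>0$ and $f^\pi$ is a closed gauge on $\Xscr\times\R$, the attained minimizer $\rpprox(x)$ (\cref{th:polar-envelope-subdiff}(iii)) cannot make both $f^\pi$ and $\alpha^{-1}\|(x,1)-{\cdot}\|$ vanish, so $\penv(x)>0$ for every $x$; consequently \cref{th:polar-envelope-subdiff}(iv) makes $\penv$ differentiable everywhere, and the continuity of $\rpprox$ together with the gradient formula there shows $\nabla\penv$ is continuous, i.e.\ $\penv\in C^1$. I would also record that $\penv$ is convex (being the restriction to the slice $\lambda=1$ of the convex polar envelope $f^\pi_\alpha$), that $\|\nabla\penv(x)\|\le 1/\alpha$ by \cref{known}(i), and that $\argmin\penv\neq\emptyset$ by \cref{Thm1}(ii), since $\argmin f\neq\emptyset$. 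Writing $\underline\beta=\inf_k\beta_k>0$ and $\bar\beta=\sup_k\beta_k<\infty$, the accepted step $s_k=2^{-t_k}\beta_k$ lies in $(0,\bar\beta]$. Because $\phi(s)=\penv(x_k-s\nabla\penv(x_k))$ has $\phi'(0)=-\|\nabla\penv(x_k)\|^2$ and $\sigma<1$, the line search terminates (the case $\nabla\penv(x_k)=0$ being trivial, as $x_k$ then already minimizes $\penv$); hence $\{\penv(x_k)\}$ is nonincreasing and summing the Armijo condition gives $\sigma\sum_k s_k\|\nabla\penv(x_k)\|^2\le\penv(x_0)<\infty$.

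The crux, and main obstacle, is boundedness of $\{x_k\}$, which I would obtain from quasi-Fej\'er monotonicity relative to $\argmin\penv$. Fix $x^*\in\argmin\penv$. Expanding $\|x_{k+1}-x^*\|^2$ and using convexity, $\ip{\nabla\penv(x_k)}{x_k-x^*}\ge\penv(x_k)-\penv(x^*)\ge 0$, yields $\|x_{k+1}-x^*\|^2\le\|x_k-x^*\|^2+s_k^2\|\nabla\penv(x_k)\|^2$; bounding $s_k^2\|\nabla\penv(x_k)\|^2\le\bar\beta\sigma^{-1}(\penv(x_k)-\penv(x_{k+1}))$ via $s_k\le\bar\beta$ and the Armijo inequality gives $\|x_{k+1}-x^*\|^2\le\|x_k-x^*\|^2+\eta_k$ with $\eta_k\ge0$ and $\sum_k\eta_k\le\bar\beta\sigma^{-1}\penv(x_0)<\infty$. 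Hence $\{\|x_k-x^*\|\}$ is bounded and $\{x_k\}$ lies in a compact set. With boundedness in hand, I would prove $\nabla\penv(x_k)\to0$ without invoking any global Lipschitz constant: if some subsequence had $\|\nabla\penv(x_{k_j})\|\ge\epsilon$, the summability above forces $s_{k_j}\to0$, so $t_{k_j}\to\infty$ (as $\beta_{k_j}\ge\underline\beta$) and the trial step $2s_{k_j}$ was rejected; dividing the rejection inequality by $2s_{k_j}$ and passing to the limit along a convergent sub-subsequence $x_{k_j}\to\bar x$ (mean value theorem plus continuity of $\nabla\penv$) gives $\|\nabla\penv(\bar x)\|^2\le\sigma\|\nabla\penv(\bar x)\|^2$, whence $\nabla\penv(\bar x)=0$, contradicting $\|\nabla\penv(\bar x)\|\ge\epsilon$.

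Finally I would conclude full-sequence convergence. The bounded sequence has an accumulation point $\bar x$, and $\nabla\penv(x_k)\to0$ with $\nabla\penv$ continuous forces $\nabla\penv(\bar x)=0$, so by convexity $\bar x\in\argmin\penv$. The quasi-Fej\'er inequality shows that $\|x_k-x^*\|$ converges for every $x^*\in\argmin\penv$; applying this with $x^*=\bar x$ and using that a subsequence tends to $\bar x$ forces $\lim_k\|x_k-\bar x\|=0$, i.e.\ $x_k\to\bar x\in\argmin\penv$. The essential point is that the lack of coercivity of $\penv$ is handled by the quasi-Fej\'er estimate in place of the usual bounded-sublevel-set step, and that the avoidance of a global Lipschitz gradient in establishing $\nabla\penv(x_k)\to0$ lets the argument go through for every $\sigma\in(0,1)$ and every admissible choice of $\{\beta_k\}$.
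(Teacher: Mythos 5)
Your proposal is correct, and it reaches the paper's conclusion by a self-contained route where the paper delegates the core argument to a citation. Your preliminary verifications coincide exactly with the paper's: positivity of $\penv$ everywhere (from $\inf f>0$), hence everywhere differentiability with continuous gradient via \cref{th:polar-envelope-subdiff}(iii)--(iv), convexity, and $\argmin\penv\neq\emptyset$ via \cref{Thm1}(ii). At that point the paper simply invokes \citet[Theorem~1]{Iusem:2003}, which is precisely the statement that gradient descent with Armijo backtracking on a convex $C^1$ function with a nonempty solution set converges to a minimizer; you instead prove that statement from scratch: summability of $s_k\|\nabla\penv(x_k)\|^2$ from the Armijo inequality, quasi-Fej\'er monotonicity with respect to $\argmin\penv$ (which substitutes for the coercivity and bounded-sublevel-set arguments that $\penv$ cannot supply), the backtracking-rejection/mean-value argument giving $\nabla\penv(x_k)\to 0$ without any global Lipschitz-gradient hypothesis (a relevant point, since \cref{property_2}(iii) provides only local Lipschitz continuity of $\nabla\kappa_\alpha$), and finally full-sequence convergence by combining the quasi-Fej\'er estimate with a single accumulation point in $\argmin\penv$. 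What the paper's route buys is brevity; what yours buys is self-containedness and an explicit check that Iusem's hypotheses---convexity, $C^1$, nonempty solution set, and nothing more---are exactly what $\penv$ delivers. One small refinement: you justify convexity of $\penv$ as the restriction of the gauge $f^\pi_\alpha$ to the affine slice $\lambda=1$, which is the accurate statement; the paper's proof loosely calls $\penv$ itself ``a gauge function,'' which cannot be literally true since $\penv(0)>0$, though this slip is harmless because only convexity is used.
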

\begin{proof}
  We first claim that $\penv(x) > 0$ for all $x$. Suppose to the contrary that $\penv(x) = 0$ for some $x$. Then we must have $f^\pi(\bar x,\bar \lambda) = (1/\alpha)\|(\bar x,\bar \lambda) - (x,1)\| = 0$, where $(\bar x,\bar \lambda) \in \rpprox(x)$. Thus, $\bar x = x$ and $\bar \lambda = 1$. Consequently, we deduce that
  \[
  0 = f^\pi(\bar x,\bar \lambda) = f^\pi(x,1) = f(x),
  \]
  contradicting the assumption that~\eqref{eq:1} has a positive
  optimal value. Thus, $\penv(x) > 0$ for all $x$. Together with
  \cref{th:polar-envelope-subdiff}(iii) and (iv), we deduce
  further that $\penv$ is a gauge function with a continuous
  gradient. Moreover, using \cref{Thm1} together with the
  assumption that $\argmin f\neq \emptyset$ (see~\eqref{eq:37}), we have
  $\argmin \penv \neq \emptyset$. The desired conclusion now follows
  from these and \citet[Theorem~1]{Iusem:2003}.
\end{proof}

{\bf EMA} is closely related to {\bf P$^4$A}. Indeed, according to
\cref{th:polar-envelope-subdiff}(iv),
$\nabla \penv(x)$ is a positive scaling of
$x - \xbar$, where
$(\xbar, \bar \lambda) =
\rpprox(x)$. Thus, one can think of {\bf EMA} as a version of {\bf P$^4$A} with a
line-search strategy incorporated to guarantee
convergence.

\section{Concluding remarks}

This paper continues the authors' investigation into the optimization
of gauge functions and their applications
\citep{FriedlanderMacedoPong:2014,ABDFM18}. Gauge functions seem to
appear naturally when modelling certain classes of inverse problems,
including sparse optimization and machine learning. Our goal is to
help establish the foundation for tools and algorithms specialized to
this family of problems.

Several research avenues arise from our results and remain to be
explored. The polar envelope and proximal map share many of the
features of the Moreau envelope and its proximal map, and we have
highlighted many of these. However, we have yet to discover an
analogue for the Moreau identity, which relates the proximal map of a
convex function with the map of its Fenchel conjugate. Can we
similarly connect the polar proximal map of a gauge function with the
map of its polar? Only the simple case is obvious: when the functions
are indicators to a closed convex cone, the (Moreau) proximal and
polar proximal maps coincide (cf.~\cref{ex:indicator-to-cone}).

Along similar lines, the referees raised several important open
questions that we are not yet able to answer. Does the duality between
strong convexity and Lipschitz differentiability of the conjugate
operation have an analogue under the polarity operation?  Is it
possible to specify conditions under which the regularized
problem~\eqref{eq:bp} exhibits an exactness property and recovers a
solution of the unregularized problem?

The polar proximal-point algorithms that we proposed in
\cref{sec:polar-prox-algo} represent a first step in developing
practical algorithms, in the same way that, say, the proximal-point
algorithm can be used as a springboard for many other algorithms used
in practice, such as augmented Lagrangian, proximal-bundle, and
proximal-gradient algorithms. What are the implementable forms of the
polar proximal-point algorithms?

\section*{Acknowledgments}

The authors extend thanks to Alberto Seeger, who patiently answered
our queries regarding his work on the max convolution operation that
laid the foundation for our investigation, and to Constantin
Z\u{a}linescu for providing us with useful references. We are also
grateful to two anonymous referees for insightful comments and
supplying future avenues of research.

  \bibliographystyle{abbrvnat}
  \bibliography{shorttitles,master,friedlander}

\end{document}
